\newcommand{\papertitle}{Conservative Finite Element Methods for Kirchhoff-Type
Equations}

\newcommand{\paperkeywords}{Kirchhoff-type equation; nonlinear wave equation; finite
element method; three-layer scheme; Picard iteration; optimal-order error estimate}

\newcommand{\papermsc}{65M60; 65M12; 65M15; 35L70; 35L90}

\newcommand{\paperabstract}{%
We construct and analyse a fully discrete scheme for a Kirchhoff-type nonlinear
integro-differential equation in two spatial dimensions with time-dependent
coefficients. The scheme combines a symmetric three-layer time discretization of
Crank--Nicolson type with a conforming finite element method of arbitrary polynomial
degree $p$ and a Picard iteration for the nonlinearity. We prove that the Picard
iteration is well defined and, that the iterates converge to the Ritz projection of the
exact solution measured in fractional-power norms of the discrete Laplacian. Running the same
energy argument in the unshifted norms, and under the same hypotheses, we then obtain the optimal order $\bigO ( h^{p+1} + \tau^2 )$ in $L^2$ and
$\bigO ( h^{p} + \tau^2 )$ in $H^1$ for the {full} error. For
the homogeneous problem with constant coefficients, the exact solution of the scheme moreover preserves a discrete energy
identically.
Numerical experiments with manufactured solutions reproduce the predicted second order
in time and the full spatial order $h^{p+1}$ in the $L^2$-norm. %
}

\newcommand{\paperdataavailability}{%
The raw error data underlying every figure, together with the sources of all figures, are
contained in the \texttt{figures/} directory of the accompanying repository, \REPOSITORYURL.
The \texttt{software/} directory of the same repository contains the implementation that
produced the reported runs: a \textsc{Python} package built on \textsc{FEniCSx}/\textsc{DOLFINx}.
}

\newcommand{\paperdeclarationAI}{During the preparation of this work, the authors used Claude in order to improve formulation for parts of the manuscript and for parts of the code to create all-encompassing scripts for the various examples and to improve presentation of the software. After using this tool, the authors reviewed and edited the content as needed and take full responsibility for the content of the published article.}

\newcommand{\papersubject}{Numerical analysis of conservative fully discrete finite element methods for Kirchhoff-type integro-differential equations with time-dependent coefficients}

\documentclass[a4paper]{article} 

\newif\ifsubmission  \submissiontrue
\newif\ifblind       \blindfalse

\usepackage[
a4paper,                          
margin=28mm,                      
marginparwidth=22mm,              
includehead, includefoot          
]{geometry}

\usepackage[dvipsnames]{xcolor}                 

\usepackage{multicol}               

\usepackage{longtable,booktabs}     

\usepackage{graphicx}               
\graphicspath{{figures/}}           

\usepackage[section]{placeins}
\usepackage{standalone}
\usepackage{tikz}
\usepackage{pgfplots}
\pgfplotsset{compat=1.18}

\usepackage{amsmath}                
\usepackage{amssymb}                
\usepackage{amsthm}                 
\usepackage{mathtools}              
\usepackage{mathrsfs}

	\usepackage{silence}                
	\usepackage[T1]{fontenc}                 
	\usepackage[utf8]{inputenc}              
	\usepackage{lmodern}                     
	\usepackage[nopatch=footnote]{microtype} 
	
	\usepackage{ragged2e}
	
	\usepackage[normalem]{ulem}         
	
	\usepackage{titlesec}               
	\usepackage{setspace}               
	\usepackage{indentfirst}            
	
	\usepackage[inline]{enumitem}
	
	\newcommand{\authorDoerich}{Benjamin D\"orich}
	\newcommand{\authorKrumbiegel}{Felix Krumbiegel}
    \newcommand{\authorRogava}{Jemal Rogava}
	\newcommand{\authorRupp}{Andreas Rupp}
	\newcommand{\authorVashakidze}{Zurab Vashakidze}
	
	\newcommand{\orcidDoerich}{0000-0001-5840-2270}
	\newcommand{\orcidKrumbiegel}{0009-0002-7288-4071}
    \newcommand{\orcidRogava}{0000-0001-9460-4283}
	\newcommand{\orcidRupp}{0000-0001-5527-7187}
	\newcommand{\orcidVashakidze}{0000-0001-8736-6213}
	
	\newcommand{\paperauthors}{\authorDoerich{}; \authorKrumbiegel{}; \authorRogava{}; \authorRupp{}; \authorVashakidze{}}

	\newif\ifrepositoryresolved  \repositoryresolvedtrue
	\ifrepositoryresolved
		\newcommand{\REPOSITORYURL}{\url{https://github.com/FelixKrumbiegel/Kirchhoff-conservative-FEM}}
	\else
		\ifblind
			\newcommand{\REPOSITORYURL}{\texttt{[repository URL withheld for double-blind review]}}
		\else
			\newcommand{\REPOSITORYURL}{\url{https://github.com/FelixKrumbiegel/Kirchhoff-conservative-FEM}}
		\fi
		\AtEndDocument{%
			\GenericWarning{}{%
				*** SUBMISSION BLOCKER: the Data and Code Availability statement still contains a
				placeholder. Set \string\repositoryresolvedtrue\space and fill in
				\string\REPOSITORYURL\space with the persistent archive DOI before submitting ***%
			}%
		}
	\fi
	
	\ifblind
		\newcommand{\runauthors}{\runningtitle}
	\else
		\newcommand{\runauthors}{\authorDoerich{} \textbullet\ \authorKrumbiegel{} \textbullet\ \authorRogava{} \textbullet\ \authorRupp{} \textbullet\ \authorVashakidze{}}
	\fi
	\newcommand{\runningtitle}{\papertitle} 
	
	\usepackage{fancyhdr}               
	\AtBeginDocument{\thispagestyle{empty}} 
	
	\fancypagestyle{plain}{%
		\fancyhf{}
		\fancyhead[C]{%
			\ifodd\value{page}%
			\small \runauthors%
			\else%
			\small \runningtitle%
			\fi%
		}
		\fancyfoot[C]{\thepage}

	}
	
	\definecolor{linkColor}{RGB}{155,0,119} 
	\ifblind
		\newcommand{\pdfauthorfield}{}
	\else
		\newcommand{\pdfauthorfield}{\paperauthors}
	\fi
	\usepackage[
	pdfstartview  = {FitH},          
	pdftitle      = {\papertitle},   
	pdfauthor     = {\pdfauthorfield}, 
	pdfkeywords   = {\paperkeywords},  
	pdfsubject    = {{\papersubject}},   
	colorlinks    = true,            
	linkcolor     = linkColor,       
	citecolor     = linkColor,       
	urlcolor      = linkColor,       
	hypertexnames = false            
	]{hyperref}                         
	
	\usepackage{orcidlink} 
	
	\usepackage[capitalise, noabbrev, nameinlink]{cleveref} 
	
	\crefname{equation}{eq.}{eqs.}      
	\Crefname{equation}{Eq.}{Eqs.}      
	\crefname{theorem}{theorem}{theorems}
	\Crefname{theorem}{Theorem}{Theorems}
	\crefname{figure}{figure}{figures}
	\Crefname{figure}{Figure}{Figures}
	\crefname{table}{table}{tables}
	\Crefname{table}{Table}{Tables}
	
	\theoremstyle{plain}
	
	\newtheorem{theorem}{Theorem}[section]      
    \crefname{theorem}{Theorem}{Theorems}   
	
	\newtheorem*{theorem*}{Theorem}             
	
	\newtheorem{lemma}[theorem]{Lemma}          
	
	\newtheorem*{lemma*}{Lemma}                 
	
	\newtheorem{corollary}[theorem]{Corollary}  
    \crefname{corollary}{Corollary}{Corollaries}   
	
	\newtheorem*{corollary*}{Corollary}         
	
	\theoremstyle{definition}
	
	\newtheorem*{definition*}{Definition}        
	
	\newtheoremstyle{boldremark} 
	{3pt}                      
	{3pt}                      
	{\normalfont}              
	{}                         
	{\bfseries}                
	{.}                        
	{.5em}                     
	{}                         
	
	\theoremstyle{boldremark}
	
	\newtheorem{remark}[theorem]{Remark}        
	
	\newtheorem*{remark*}{Remark}               
	
	\theoremstyle{definition}

	\newtheorem{assumption}[theorem]{Assumption}
  \Crefname{assumption}{Assumption}{Assumptions}
  \crefname{assumption}{Assumption}{Assumptions}

	\newtheorem{hypothesis}[theorem]{Hypothesis}
  \Crefname{hypothesis}{Hypothesis}{Hypothesis}
  \crefname{hypothesis}{Hypothesis}{Hypothesis}

	\theoremstyle{plain}
	\newtheorem{proposition}[theorem]{Proposition}
	\theoremstyle{definition}
	
	\newcommand{\refitem}[2]{%
		\hyperref[#2]{#1~\ref*{#2}}
	}
	
	\newcommand{\figsubref}[2]{%
		\hyperref[#2]{Figure~\ref*{#1}(\subref*{#2})}
	}
	
    \newcommand{\appref}[1]{%
        \hyperref[#1]{Appendix}%
    }
	
	\makeatletter
	\renewenvironment{proof}[1][\proofname]{
		\par\pushQED{\qed}\normalfont
		\topsep6\p@\@plus6\p@\relax
		\trivlist\item[\hskip\labelsep\bfseries#1\@addpunct{.}]
		\ignorespaces
	}{%
		\popQED\endtrivlist\@endpefalse
	}
	\makeatother
	
	\newcommand{\R}{\mathbb{R}}         
    \newcommand{\domain}{\Omega}
    \newcommand{\mesh}{\mathcal{T}_h}
    \newcommand{\ltwoprojection}{P_h}
    \newcommand{\ritzprojection}{R_h}
    \newcommand{\discreteL}{\mathcal{L}_h}
    
    \newcommand{\interpolation}{J_h}
    \newcommand{\cstinv}{C_{\mathrm{inv}}} 
    \newcommand{\cstcoerc}{C_{\theta}}     

	\long\def\bigO{\mathcal{O}}        
	\long\def\L{\mathcal{L}}           
	\long\def\T{\mathcal{T}}           
	\long\def\ie{\textit{i.e.}}        
	\long\def\apriori{\textit{a priori}} 
	\long\def\wrt{\textit{wrt.}} 
	
	\newcommand{\dint}[2][\Omega]{
		\iint\limits_{#1}
		#2
		\,\mathrm{d}x
		\,\mathrm{d}y
	}
	
\newcounter{constant}

\newcounter{mconst}

\newcommand{\cst}[1]{%
  \refstepcounter{constant}
  \expandafter\xdef\csname #1\endcsname{c_{\arabic{constant}}}%
}

\newcommand{\mcst}[1]{%
  \refstepcounter{mconst}
  \expandafter\xdef\csname #1\endcsname{M_{\arabic{mconst}}}%
}

\mcst{cstmone}    
\mcst{cstmtwo}    
\mcst{cstmthree}  
\mcst{cstmfour}   

\mcst{cstmfive}   
\mcst{cstmseven}  
\mcst{cstmseight}  
\mcst{cstmsix}    

\cst{lmonecstalph}   
\cst{lmonecstbeta}   
\cst{lmonecstdiff}   

\cst{lmtwocstdiffbw}             
\cst{lmtwocsthatenerg}           
\cst{lmtwocstfracone}            
\cst{lmtwocstfractwo}            
\cst{lmtwocstmaxf}               
\cst{lmtwocstnearestexp}         
\cst{lmtwocstnearestfinineq}     

\cst{cststatementone}            
\cst{cststatementtwo}            
\cst{thmcstdiffbetaw}            
\cst{thmcsthalflnorm}            
\cst{thmcstlnorm}                
\cst{thmcstinnerprodgk}          
\cst{thmcstinnerprodrem}         
\cst{thmcstsumoverczero}         
\cst{thmcstmaxtwoconst}          
\cst{thmcstesttone}              
\cst{thmcstestwone}              
\cst{thmtildevareps}             
\cst{thmtildeEkplusone}          

\cst{rmkthmcstkinetic}           

\cst{thmcstesttildc}             
\cst{thmcstsqrtekplusone}        

\cst{cstczero}    
\cst{cstcone}     
	
	\titleformat{\section}{\large\bfseries}{\thesection}{1em}{}            
	\titleformat{\subsection}{\normalsize\bfseries}{\thesubsection}{1em}{} 
	
	\titlespacing*{\section}{0pt}{*2}{*0.6}
	\titlespacing*{\subsection}{0pt}{*1.3}{*0.4}
	
	\usepackage[%
                doi=false,
                backend=biber,
                style=numeric,
                ]{biblatex} 
	\appto\biburlsetup{\Urlmuskip=0mu plus 1mu\relax}
	\numberwithin{equation}{section} 
	\numberwithin{figure}{section}   
	\numberwithin{table}{section}    
	
	\usepackage[most]{tcolorbox} 
	\tcbset{%
		myabstract/.style={%
			colback=gray!5!white,    
			colframe=black!40,       
			fonttitle=\bfseries,     
			coltitle=black,          
			boxrule=0.6pt,           
			arc=2mm,                 
			enhanced,                
			before skip=10pt,        
			after skip=20pt,         
			breakable                
		}%
	}
	\renewenvironment{abstract}
	{\begin{tcolorbox}[myabstract,title=Abstract]}
		{\end{tcolorbox}}
	
	\title{\papertitle}                 
	\author{\runauthors}                
	\date{}                             
	
	\newcommand{\titleseparator}{\par\noindent{\color{black}\rule{\linewidth}{0.8pt}}\par}
	
	\allowdisplaybreaks

	\AtBeginDocument{%
		\setlength{\abovedisplayskip}{5pt plus 2pt minus 2pt}%
		\setlength{\belowdisplayskip}{5pt plus 2pt minus 2pt}%
		\setlength{\abovedisplayshortskip}{2pt plus 1pt minus 1pt}%
		\setlength{\belowdisplayshortskip}{3pt plus 1pt minus 1pt}}
	\makeatletter\def\thm@space@setup{\thm@preskip=4pt \thm@postskip=4pt}\makeatother  

\begin{document}

		
		\sloppy  
		
        \begingroup\thispagestyle{empty}
        	\thispagestyle{empty} 
        	
        	\begin{center}
        		\medskip
        		
        		\titleseparator 
        		
        		{\LARGE\bfseries \papertitle\par} 
        		
        		\titleseparator 
        		
        		\medskip
        		
        		%

        		\ifblind
        		{\large\textit{Author names and affiliations removed for double-blind review.}\par}
        		\else
        		\begin{tabular}[t]{cc}
        			
        			\begin{minipage}[t]{0.44\textwidth}
        				\centering 
        				
        				{\large\textbf{\authorDoerich{}}~\orcidlink{\orcidDoerich}\par} 
        				
        				{\small
      						Department of Mathematics, Saarland University,\\
        				Campus E1.1, 66123 Saarbrücken, Germany\par
        				} 
        				
        				
        				{\small
        					\href{mailto:benjamin.doerich@uni-saarland.de}{\texttt{benjamin.doerich@uni-saarland.de}}\par
        				} 
        			\end{minipage}
        			&
        			
        			\begin{minipage}[t]{0.44\textwidth}
        				\centering 
        				
        				{\large\textbf{\authorKrumbiegel{}}~\orcidlink{\orcidKrumbiegel}\par} 
        				
        				{\small
        					Department of Mathematics, Saarland University,\\
        					Campus E1.1, 66123 Saarbrücken, Germany\par
        				} 
        				
        				
        				{\small
        					\href{mailto:felix.krumbiegel@uni-saarland.de}{\texttt{felix.krumbiegel@uni-saarland.de}}\par
        				} 
        			\end{minipage}
        			
        		\end{tabular}
        		
        		
        		\begin{tabular}[t]{cc}
        			
        			\begin{minipage}[t]{0.44\textwidth}
        				\centering 
        				
        				{\large\textbf{\authorRogava{}}~\orcidlink{\orcidRogava}\par} 
        				
        				{\small
        					Ivane Javakhishvili Tbilisi State University, Ilia Vekua Institute of Applied Mathematics,\\
        					11 University Street, 0186, Tbilisi, Georgia\par
        				} 
        				
        				
        				{\small
        					\href{mailto:jemal.rogava@tsu.ge}{\texttt{jemal.rogava@tsu.ge}}\par
        				} 
        			\end{minipage}
        			&
        			
        			\begin{minipage}[t]{0.44\textwidth}
        				\centering 
        				
        				{\large\textbf{\authorRupp{}}~\orcidlink{\orcidRupp}\par} 
        				
        				{\small
        					Department of Mathematics, Saarland University,\\
        					Campus E1.1, 66123 Saarbrücken, Germany\par
        				} 
        				
        				
        				{\small
        					\href{https://rupp.ink}{\texttt{rupp.ink}} \textbar\ %
        					\href{mailto:andreas.rupp@uni-saarland.de}{\texttt{andreas.rupp@uni-saarland.de}}\par
        				} 
        			\end{minipage}
        		\end{tabular}
        		
        		\begin{tabular}[t]{cc}
        			
        			\begin{minipage}[t]{0.44\textwidth}
        				\centering 
        				
        				{\large\textbf{\authorVashakidze{}}~\orcidlink{\orcidVashakidze}\par} 
        				
        				{\small
        					School of Science and Technology, The University of Georgia (UG),\\
        					77 Merab Kostava Street, 0171, Tbilisi, Georgia;\\[0.4em]
        					Ilia Vekua Institute of Applied Mathematics,\\
        					11 University Street, 0186, Tbilisi, Georgia\par
        				} 
        				
        				
        				{\small
        					\href{mailto:z.vashakidze@ug.edu.ge}{\texttt{z.vashakidze@ug.edu.ge}} \textbar\ %
        					\href{mailto:zurab.vashakidze@tsu.ge}{\texttt{zurab.vashakidze@tsu.ge}}\par
        				} 
        			\end{minipage}
        			&
        			
        			\begin{minipage}[t]{0.44\textwidth}
        				\centering 
        				
        				{\large\phantom{\textbf{\authorVashakidze{}}~\orcidlink{\orcidVashakidze}}\par} 
        			\end{minipage}

        		\end{tabular}
        		\fi 

        		\medskip
        		
        		\begin{minipage}{0.9\textwidth}
        			
        			\begin{abstract}
				\paperabstract
        				
        			\end{abstract}

        			\medskip

        			\noindent\textbf{Keywords:} \paperkeywords.

        			\noindent\textbf{MSC 2020:} \papermsc.

        		\end{minipage}
        		
        	\end{center}
        \endgroup
		
		\newpage

\section{Introduction}\label{sec:introduction}

\subsection{Problem Formulation}\label{sec:problem_formulation}

Kirchhoff-type equations constitute an important class of nonlocal partial differential
equations arising from the classical model proposed by Kirchhoff as an extension of the linear
theory of vibrating strings; see \cite{kirchhoff1876vorlesungen}. In this model the tension
varies according to the total stretching induced by the displacement, so that the principal
part of the equation depends on a global quantity involving the solution itself, typically its
Dirichlet energy. In two spatial dimensions, the equation models the transverse vibrations of
an elastic membrane whose effective tension depends on its total deformation energy, a setting
that arises in contemporary work on nanomechanical oscillators~\cite{Yao2022}; allowing the
material parameters to depend on time reflects the variation of those properties during
vibration, for instance under thermal fluctuation or strong external
driving~\cite{Samanta2023,Sun2006}.

We consider the nonlinear Kirchhoff-type integro-differential equation
\begin{equation}\label{eq:main_kirchhoff}
  \frac{\partial^2 u  }{\partial t^2}
  - \Bigl( \alpha(t)  + \beta(t)  \dint{\left| \nabla u   \right|^2} \Bigr) \Delta u  
  = f  \,,
\end{equation}
posed in $ \left[ 0,T \right]$ and a domain $\domain \subset \R^2$, together with the
initial and boundary conditions
\begin{equation}\label{eq:initial_conds}
  u ( 0 ) = u_0\,,\qquad
  \partial_t u ( 0 ) = v_0\,,\qquad
  u ( t ) |_{\partial \domain} = 0 \quad \text{for } t \in \left[ 0,T \right]\,.
\end{equation}
Here $u$ is the transverse displacement, $\left| \, \cdot\, \right|$ the Euclidean norm and
$\nabla$ the gradient. Throughout both this paper and the companion
paper~\cite{KirchhoffTime}, $u$ and $f$ are regarded as functions of $t$ with values in a space
of functions on $\domain$, so that the spatial argument is never displayed. $u ( t )$ is the
state at time $t$ and $u_h^{n}$ the fully discrete approximation at the $n$-th time level. The coefficients are scalar functions of $t$ and do carry that
argument, \ie, $\alpha, \beta \in C^1 \left[ 0,T \right]$ satisfy $\alpha_{\min} > 0$ and
$\beta_{\min} > 0$, where throughout $\alpha_{\min} = \min_{[0,T]} \alpha ( t )$,
$\alpha_{\max} = \max_{[0,T]} \alpha ( t )$ and $\beta_{\min}, \beta_{\max}$ are defined
correspondingly. The source term is continuous in time with values in $L^2 ( \domain )$, so that
the grid values $f ( n\tau )$, where $\tau$ is the time step size, are defined. \Cref{sec:full-discretization} imposes more regularity on the right-hand side where it is needed. Throughout this
paper $\domain$ is a bounded convex \emph{polytopal} domain, so that it is meshed exactly by
the simplicial families used below. For a general bounded convex domain, the analysis applies
verbatim to a polytopal approximation $\domain_h \subset \domain$, at the cost of an additional
geometric consistency error, or with isoparametric elements of matching degree, and we do not
pursue either variant here.

\subsection{Related work and contribution}\label{sec:related_work}

Numerical methods for Kirchhoff-type equations have a long history. Christie and Sanz-Serna~\cite{Christie1984} combined a finite element discretization in space with a Crank--Nicolson scheme in time for the one-dimensional problem with constant coefficients. In \cite{Peradze2005}, Peradze proposed and analyzed a numerical scheme combining a sine-based Galerkin spatial discretization with a Crank--Nicolson-type time-stepping method for the nonlinear one-dimensional {homogeneous} Kirchhoff string problem with constant coefficients. Gudi \cite{Gudi2012} investigated a finite-element approximation of a second-order nonlinear elliptic boundary value problem that arises as the stationary counterpart of the generalized Kirchhoff equation. More recently, Mallik \cite{Mallik2026} developed and analyzed a hybrid high-order finite element method for a nonlocal nonlinear elliptic boundary value problem of Kirchhoff type, establishing optimal-order convergence in the discrete energy norm. Rogava and Tsiklauri~\cite{RogTsikl2012,RogTsikl2014} analysed symmetric three-layer semi-discrete schemes in an abstract setting and established local convergence; the one-dimensional problem with \emph{time-dependent} coefficients was treated in \cite{RogVashZAMM2024}, and related schemes appear in \cite{RogTsikVash2023,VashGMJ2022,Vash2020}. Well-posedness of the continuous problem for constant coefficients goes back to Arosio and Panizzi~\cite{wellposedness_kirchhoff_1996}.

Beyond discretizations developed specifically for Kirchhoff equations, there has also been extensive work on the numerical analysis of quasilinear wave problems. Classical finite-element and multistep analyses for nonlinear second-order hyperbolic equations were given by Bales and Dougalis \cite{BalD89} and Makridakis \cite{Mak93}. More recently, spatial and fully discrete error analyses for general classes of quasilinear wave-type equations were developed by Hochbruck and Maier \cite{HocM22} and Maier \cite{Mai22}, while strong-norm estimates under weak CFL-type conditions were obtained by D\"orich \cite{Doe24}. Related finite-element analyses for nonlinear acoustic wave models include Nikoli\'{c} and Wohlmuth \cite{NicW19} and D\"orich and Nikoli\'{c} \cite{DoeN26}. These works concern different, predominantly local or damped quasilinear structures and different time integrators. In contrast, the coefficient considered here is a time-dependent spatially nonlocal scalar depending on the total Dirichlet energy, and the symmetric three-layer discretization is designed to preserve an exact discrete energy in the homogeneous constant-coefficient case.

The companion paper~\cite{KirchhoffTime} analyses the temporal discretization of
\eqref{eq:main_kirchhoff}--\eqref{eq:initial_conds} in two space dimensions: it establishes
energy conservation for two symmetric three-layer schemes, uniform \apriori{} bounds, local
second-order convergence in time. The present paper takes the semi-discrete scheme
analysed there as its starting point and supplies the spatial discretization. The
contributions are the following. %

We formulate a fully discrete scheme combining the symmetric three-layer time
stepping with a conforming finite element method of arbitrary polynomial degree $p$ and a
Picard iteration for the nonlinearity. The discretization of the nonlinear coefficient leaves
one free parameter, and our first result is that for the symmetric weighting in time and
time-independent coefficients, each choice of that parameter has its own discrete energy,
which the scheme conserves exactly on the homogeneous problem
(\Cref{prop:fully_discrete_conservation}). %
Further, we prove that at each time level the nonlinear scheme admits a unique solution, to
which the Picard iteration converges (\Cref{lem:fully_discrete_local_existence}). We then
prove convergence of the error relative to the Ritz projection of the exact solution, measured
in fractional-power norms of the discrete Laplacian, locally in time
(\Cref{thm:fully_discrete_local_convergence}) and then over all time levels
(\Cref{cor:fully_discrete_convergence}). A single argument covers a whole range of these
norms. The stronger pair is what the existence theory needs and yields
$\bigO ( h^{p+\frac12} + \tau^{2} )$, while the weaker pair yields the optimal order. To our
knowledge this step has not been taken for Kirchhoff-type equations with time-dependent
coefficients. %
From the latter we obtain an estimate of \emph{optimal} order for the full error
$u(t_n) - u_h^n$, namely $\bigO ( h^{p+1} + \tau^{2} )$ in $L^2$ and
$\bigO ( h^{p} + \tau^{2} )$ in $H^1$ (\Cref{cor:fully_discrete_optimal_error}). The
half-order gap is a property of the norms in which the first estimate is measured, not of the
scheme. This is the rate the numerical experiments exhibit.

Two works are
the natural points of comparison. The work \cite{Christie1984} treats the one-dimensional problem with
constant coefficients by a Galerkin method in space and a Crank--Nicolson discretization in
time. Closer to the present setting, Dautov and Ivanova~\cite{DautovIvanova2023} construct a
conservative finite element scheme for the Kirchhoff equation: an implicit two-layer
scheme for the first-order formulation, which
conserves a discrete energy, is realised by Newton's method, and is reported to achieve
$\bigO ( h^{2} + \tau^{2} )$ accuracy in $L^2$ for both unknowns in numerical tests.
The present work differs from these in the following respects. We treat the case of time-dependent coefficients and employ a symmetric three-layer time-discretization scheme for the second-order formulation, coupled with a finite-element spatial discretization of arbitrary polynomial degree $p$. For the resulting nonlinear system, we establish the existence and uniqueness of the solution and prove the convergence of the associated Picard iteration.

The paper is organized as follows: \Cref{sec:setting} fixes notation, states the standing
hypothesis on the continuous problem and records what \cite{KirchhoffTime} proves about the
semi-discrete scheme. \Cref{sec:full-discretization} contains the fully
discrete scheme with its energy conservation, and \Cref{sec:analysis} its convergence analysis. \Cref{sec:numerics} reports the numerical
experiments and \Cref{sec:conclusion} concludes.

\section{Setting and the Semi-Discrete Scheme}\label{sec:setting}

\noindent We write $\L = -\Delta$ for the Dirichlet Laplacian on $\domain$, a self-adjoint
positive definite operator with domain
$D \left( \L \right) = H^2 \left( \domain \right) \cap H_0^1 \left( \domain \right)$, the last
equality holds because $\domain$ is bounded and convex. Its fractional powers $\L^{a}$ are defined
through the spectral resolution. Denote with $\| \cdot \|$ and $\langle \cdot,\cdot \rangle$ the
norm and inner product of $L^2 \left( \domain \right)$, where~$\langle \cdot,\cdot\rangle$ also carries the
$H^{-1} ( \domain )$ -- $H_0^1 ( \domain )$ duality where the first argument calls for it, as noted
at its first occurrence. Interpolation between
$L^2 \left( \domain \right)$ and $D \left( \L \right)$ gives the embedding
\begin{equation}\label{eq:domain_sobolev_embedding}
  D \bigl( \L^{a} \bigr) \hookrightarrow H^{2a} \left( \domain \right)\,,\qquad 0 \leq a \leq 1\,,
\end{equation}
which is the only direction used below: it lets results quoted from the literature in the
Sobolev scale be applied under the operator-domain hypotheses stated here. The reverse
inclusion is false for $a > \tfrac14$, since membership of $D ( \L^{a} )$ encodes the
homogeneous Dirichlet condition. Temporal regularity hypotheses are therefore stated in the
operator domains, spatial approximation hypotheses in the Sobolev scale.

First, we comment on the genuinely nonlinear of the two three-layer schemes considered in
\cite{KirchhoffTime} and the main result therein. It is the one whose fully discrete counterpart we analyse. %
On the uniform temporal grid $t_n = n \tau$, $\tau = \frac{T}{N}$ with the number of times steps~$N\in\mathbb{N}$, the semi-discrete scheme 
\begin{equation}\label{eq:nonlinear_scheme_extended}
  \frac{u^{n+1} - 2 u^{n} + u^{n-1}}{\tau^2}
  + \tfrac{1}{2} \Bigl( \alpha^n + \tfrac{\beta^n}{2}
    \left( \| \nabla u^{n+1} \|^2 + \| \nabla u^{n-1} \|^2 \right) \Bigr)
    \L \left( u^{n+1} + u^{n-1} \right)
  = f^{n}\,
\end{equation}
is given
for $n = 1, \dots, N-1$, with $\alpha^n = \alpha ( t_n )$, $\beta^n = \beta ( t_n )$ and
$f^{n} = f ( t_n )$, and with the starting values
\begin{equation}\label{eq:starting_vec1}
  u^{0} = u_0\,,\qquad
  u^{1} = u_0 + \tau v_0 + \frac{\tau^2}{2} w_0\,,\qquad
  w_0 = f^{0} - q^0 \L u_0\,,\quad
  q^0 = \alpha^0 + \beta^0 \| \nabla u_0 \|^2 \,.
\end{equation}

\begin{proposition}[{\cite[Thm.~5.4, Cor.~5.5 and Cor.~5.6]{KirchhoffTime}}]\label[proposition]{prop:conver_thm_nonlinear}
  Under the following regularity hypotheses $u_0 \in D ( \L^{2} )$, $v_0 \in D ( \L )$, $f \in C ( [0,T] ; D ( \L^{\frac12} ) )$ with
  $f^0 \in D ( \L )$, three time derivatives of $u$ with $\partial_t^3 u$ Lipschitz in $t$, and
  the corresponding hypotheses on the second spatial derivatives, the time-discrete solution
  of \eqref{eq:nonlinear_scheme_extended} converges to the exact solution with order two in
  time, locally on an interval $[0,\overline{T}]$ whose length is determined by the data. The
  same order holds for the central-difference approximation of the first time derivative.
\end{proposition}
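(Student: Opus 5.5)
The plan is to run an energy argument for the error $e^n = u(t_n) - u^n$ in the pair of norms $\|\L^{\frac12}\cdot\|$ (for the central difference $(e^{n+1}-e^{n-1})/(2\tau)$ or the forward difference) and $\|\L\cdot\|$ (for $e^n$ itself). This is combined with a bootstrap that keeps the discrete solution in a ball around the exact one, which is where the restriction to $[0,\overline{T}]$ enters. The first step is consistency. We insert $u(t_n)$ into \eqref{eq:nonlinear_scheme_extended} and Taylor-expand around $t_n$. Since $\partial_t^3 u$ is Lipschitz, the difference quotient equals $\partial_t^2 u(t_n) + \bigO(\tau^2)$. The symmetric averages $\tfrac12(u(t_{n+1})+u(t_{n-1}))$ and $\tfrac12(\|\nabla u(t_{n+1})\|^2+\|\nabla u(t_{n-1})\|^2)$ differ from their values at $t_n$ by $\bigO(\tau^2)$, because the odd terms cancel. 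This yields a truncation error $r^n$ with $\|\L^{\frac12} r^n\| \le C\tau^2$, using the hypotheses on the second spatial derivatives. For the start, \eqref{eq:starting_vec1} matches the Taylor expansion up to $\tau^2$, so $e^0=0$ and $e^1 = \bigO(\tau^3)$ in $D(\L)$, using $u_0\in D(\L^2)$, $v_0\in D(\L)$ and $f^0\in D(\L)$.

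Next comes the error equation. We subtract the scheme from the perturbed exact equation and write $q^n(v,w) = \alpha^n + \tfrac{\beta^n}{2}(\|\nabla v\|^2+\|\nabla w\|^2)$. The nonlinear difference splits as $q^n(u^{n+1},u^{n-1})\,\L(e^{n+1}+e^{n-1})$ plus $\bigl(q^n(u(t_{n+1}),u(t_{n-1})) - q^n(u^{n+1},u^{n-1})\bigr)\L(u(t_{n+1})+u(t_{n-1}))$. The second term is bounded by $C(\|\L^{\frac12}e^{n+1}\|+\|\L^{\frac12}e^{n-1}\|)$, with $C$ depending on the radius of the bootstrap ball. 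We test with $\L(e^{n+1}-e^{n-1})$. The difference-quotient term telescopes into $\|\L^{\frac12}(e^{n+1}-e^n)\|^2/\tau^2$ terms. The principal term gives $\tfrac12 q^n\bigl(\|\L e^{n+1}\|^2 - \|\L e^{n-1}\|^2\bigr)$, and the change of the scalar $q^n$ between consecutive levels is $\bigO(\tau)$ thanks to $\alpha,\beta\in C^1$. That part of the argument is not a telescoping identity; it comes from the bootstrap bound on the discrete differences $\|\nabla(u^{n+1}-u^{n-1})\| \lesssim \tau$.

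The step I expect to be the main obstacle is the variation of $q^n$. The quantity $q^n - q^{n-1}$ involves $\|\nabla u^{n+1}\|^2 - \|\nabla u^{n-2}\|^2$. To control this at order $\tau$ without a CFL condition, I need an a priori bound on $\|\L^{\frac12}(u^{n+1}-u^n)\|/\tau$, and this should come from the uniform bounds of \cite{KirchhoffTime}, available through a discrete energy estimate on the scheme itself. After that I define a weighted energy $E^n$, equivalent to $\|\L^{\frac12}(e^{n+1}-e^n)/\tau\|^2 + \|\L e^{n+1}\|^2 + \|\L e^n\|^2$ since $q^n\ge\alpha_{\min}>0$. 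I then derive $E^n \le (1+C\tau)E^{n-1} + C\tau^5$ and apply the discrete Gronwall inequality to get $E^n\le C\tau^4$, i.e.\ order two.

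The bootstrap closes by induction. As long as $\|\L e^k\|\le 1$ for $k\le n$, the constants stay fixed, and the estimate gives $\|\L e^{n+1}\|\le C\tau^2\le 1$ for small $\tau$. The time $\overline{T}$ is whatever is needed for the exact solution and the uniform discrete bounds to exist. Finally, the order two for $(u^{n+1}-u^{n-1})/(2\tau)$ follows from the control of $E^n$, since that quantity is the average of two consecutive forward differences.
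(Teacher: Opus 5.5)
The paper gives no proof of this proposition. It is quoted from the companion paper and, by \Cref{rem:no_import}, never used. The natural yardstick is therefore the paper's fully discrete argument, \Cref{thm:fully_discrete_local_convergence}. Your plan reproduces its architecture at the level $a=\tfrac12$: test function $\L(e^{n+1}-e^{n-1})$, telescoping of the scalar coefficient via $q^n-q^{n-1}=\bigO(\tau)$, Gr\"onwall plus bootstrap. However, it leaves out the device that lets that argument run on the available regularity, and without it your consistency step fails.

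Your truncation error contains the stiff part $d^n=\tfrac12 Q^n\L(\hat u^{n+1}-2\hat u^n+\hat u^{n-1})$, with $Q^n=q^n(\hat u^{n+1},\hat u^{n-1})$ and $\hat u^k=u(t_k)$. The bound $\|\L^{\frac12}d^n\|\lesssim\tau^2$ requires $\partial_t^2u\in L^\infty(0,T;D(\L^{\frac32}))$, i.e.\ third spatial derivatives, which the hypotheses on the second spatial derivatives do not give. Going through the equation does not rescue it. Writing $\L\hat u=(f-\partial_t^2\hat u)/q$ turns $\L^{\frac32}(\hat u^{n+1}-2\hat u^n+\hat u^{n-1})$ into a second time difference of $(f-\partial_t^2 u)/q$ in $D(\L^{\frac12})$, and that needs time regularity of $f$ in $D(\L^{\frac12})$, whereas $f$ is only assumed continuous in time there. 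For the other terms carrying $\L$, namely $(Q^n-q(t_n))\L\hat u^n$ and the coefficient-error term, the needed bound on $\L^{\frac32}\hat u$ does follow from the equation and $f\in C([0,T];D(\L^{\frac12}))$. This is exactly how the paper bounds $\|\discreteL^{1+a}\ritzprojection\hat u^m\|$ for $S_h^m$. With only $\|d^n\|\lesssim\tau^2$, the pairing $\langle d^n,\L(e^{n+1}-e^{n-1})\rangle$ carries no factor $\tau$, because $\|\L(e^{n+1}-e^{n-1})\|$ is not $\bigO(\tau)$ in your energy. Summing $T/\tau$ such terms then yields only first order.

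The missing idea is the summation by parts in time that the paper applies to $\delta_{h;\frac34}$ in \eqref{eq:fully_discrete_summed}. Concretely:
\begin{enumerate}
\item Sum the tested identity first and write $\langle d^m,\L(e^{m+1}-e^{m-1})\rangle=\langle d^m,\L(e^{m+1}+e^{m})\rangle-\langle d^{m-1},\L(e^{m}+e^{m-1})\rangle-\tau\langle\partial_\tau d^m,\L(e^{m}+e^{m-1})\rangle$.
\item Absorb the remaining boundary term at level $n$ into $\tfrac12 q^n(\|\L e^{n+1}\|^2+\|\L e^{n}\|^2)$ by a weighted Young inequality.
\item Bound the interior sum using $\|\partial_\tau d^m\|\lesssim\tau^2$, which needs only $\L\partial_t^2u,\L\partial_t^3u\in L^\infty(0,T;L^2)$ and $|Q^m-Q^{m-1}|\lesssim\tau$.
\item Apply the discrete Gr\"onwall inequality to the summed inequality rather than to your one-step recursion.
\end{enumerate}

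Alternatively, you can estimate the error one level lower, in $(\partial_\tau e,\L^{\frac12}e)$, where an $L^2$ truncation error suffices. The bounds on $\|\L u^n\|$ and $\|\L^{\frac12}\partial_\tau u^n\|$ that $q^n-q^{n-1}=\bigO(\tau)$ requires would then come from a stability estimate of the scheme tested with $\L(u^{n+1}-u^{n-1})$. The data hypotheses $u_0\in D(\L^2)$, $v_0\in D(\L)$, $f^0\in D(\L)$ (which give $u^1\in D(\L)$) and $f\in C([0,T];D(\L^{\frac12}))$ are exactly what that estimate consumes. Because the coefficient variation makes it cubic, it holds only locally in time, which is a natural origin of $\overline T$.

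Two smaller points:
\begin{itemize}
\item Inside your own bootstrap the top-level bound need not be imported. Testing the scheme with $u^{n+1}-u^{n-1}$ bounds $\|\nabla u^{n+1}\|$ and $q^n$ by the data, independently of the bootstrap. In $\tfrac12(q^n-q^{n-1})(\|\L e^{n}\|^2+\|\L e^{n-1}\|^2)$, the factor $\|\L^{\frac12}\partial_\tau u^{n+1}\|\le C+\sqrt{E^n}$ multiplies an $\bigO(\tau^4)$ quantity and can be absorbed. This is the role \Cref{lem:fully_discrete_local_existence} plays for the top level in the paper.
\item The bound $e^1=\bigO(\tau^3)$ in $D(\L)$ comes from $\partial_t^3u\in L^\infty(0,T;D(\L))$. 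The data assumptions only ensure $u^1\in D(\L)$.
\end{itemize}
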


\begin{remark}[What is {not} imported from the companion paper]\label[remark]{rem:no_import}
  \Cref{prop:conver_thm_nonlinear} is recorded to place the present paper next to its
  companion, and is never invoked below. Neither is the existence and uniqueness of the time-discrete trajectory. Nothing below requires either. The hypotheses are stated
  with each result and not inherited, in particular the existence of the fully discrete
  solution is proved here and not assumed. 
\end{remark}

We close this section with an existence result for the PDE \eqref{eq:main_kirchhoff}--\eqref{eq:initial_conds}. The analysis below mainly uses the following assumptions, and only for convergence, it is necessary that higher regularity in time and space is assumed. 

\begin{assumption}[Existence of a solution]\label[assumption]{ass:existence_exact_solution}
  Let $u_0 \in D ( \L^{\frac34} )$, $v_0 \in D ( \L^{\frac14} )$ and
  $f \in C^0 ( [0,T_\mathrm{max}] ; D ( \L^{\frac14} ) )$ for some possibly infinite final time~$T_\mathrm{max}$. Then there exists a unique solution
  $u$ of \eqref{eq:main_kirchhoff}--\eqref{eq:initial_conds} with
  \begin{equation*}
    u \in C^0 \bigl( [0,T] ; D ( \L^{\frac34} ) \bigr)
      \cap C^1 \bigl( [0,T] ; D ( \L^{\frac14} ) \bigr)
  \end{equation*}
  for a final time $T < T^{\star\star} \leq T_\mathrm{max}$. 
\end{assumption}

\noindent For constant $\alpha$ and $\beta$ a result of this type is proved in
\cite[Thm.~2.1]{wellposedness_kirchhoff_1996}, where the blow-up time $T^{\star\star}$ is also specified. %
For time-dependent $\alpha$ and $\beta$ the coefficient
$\alpha(t) + \beta(t) \| \nabla u(t) \|^2$ is nonautonomous and that theorem does not apply, so
there we take the statement as a hypothesis on the continuous problem. It is a hypothesis
about the exact solution only; it is not used to construct anything, so no circularity arises.
As in \cite{KirchhoffTime}, we do not claim that the hypotheses of that theorem --- in
particular whatever regularity it imposes on the domain --- are met by every domain admitted
here; where they are not, the statement above is to be read as a hypothesis in the
constant-coefficient case as well.


\section{Extension to full discretization and energy conservation}\label{sec:full-discretization}
\subsection{Preliminaries}
\noindent Following the assumption and the existence of a solution~\Cref{ass:existence_exact_solution} we may define
\begin{alignat*}{2}
M_a  &= 2 \| \L^{a} u \|_{L^\infty(L^2)}, \qquad &&0 \leq a \leq \tfrac34,\\
M'_a &= 2 \| \L^{a} \partial_t u \|_{L^\infty(L^2)}, 
\qquad
C_a = \| \L^{a} f \|_{L^\infty(L^2)}, \qquad &&0 \leq a \leq \tfrac14
\end{alignat*}
for the solution~$u$ and source term~$f$. %
\par The balls around the solution play a key role in the analysis, as we prove that the fully discrete solution lies inside these balls at all times, which is then used to show unique existence and convergence of the solution. 

Let~$\domain\subset \mathbb{R}^2$ be an open, bounded, and convex domain (hence Lipschitz), as assumed in the introduction, and \emph{polytopal}, that is, a convex polygon, and~$\mesh$ be a regular decomposition of~$\domain$ into quasi-uniform triangles. %
Polytopality makes the decomposition into straight-sided simplices exact, so that
$V_h \subset H_0^1(\domain)$ holds without a variational crime. Convexity secures
the $H^2$-regularity behind the embedding~\eqref{eq:domain_sobolev_embedding}. %
Further, let for any polynomial degree~$p \geq 1$ the space of piecewise (\wrt{} the mesh) polynomials up to degree~$p$ with zero boundary conditions be denoted by 
\begin{equation*}
  V_h
  \coloneqq
  \{
    v\in C(\bar\domain)
    \mid
    v \vert_{E} \in \mathbb{P}_p (E) \textrm{ for any element } E\in\mesh
  \}
  \cap
  H^1_0(\domain).
\end{equation*}
Now, we define the discrete operator~$\discreteL\colon V_h \to V_h$ by 
\begin{equation}\label{eq:discrete_laplacian}
  \langle
    \discreteL v_h, \varphi_h
  \rangle
  =
  \langle
    \nabla v_h, \nabla \varphi_h
  \rangle
  \qquad \textrm{ for all } v_h, \varphi_h \in V_h.
\end{equation}
The operator~$\discreteL$ plays the role of~$\L$ for the discrete setting. This connection can be observed in the following. %
Let~$\ltwoprojection$ denote the~$L^2$-orthogonal projection onto~$V_h$, let~$\ritzprojection$
denote the Ritz projection onto~$V_h$, given by
\begin{align*}
  \langle \ltwoprojection v, \varphi_h \rangle = \langle v , \varphi_h \rangle,
  \quad
  \langle \nabla (\ritzprojection v), \nabla \varphi_h \rangle
  = \langle \nabla v , \nabla \varphi_h \rangle,
  \qquad
  \textrm{for all } \varphi_h\in V_h,
\end{align*}
and let~$\interpolation$ denote a quasi-interpolation operator onto~$V_h$: for the lowest order
case~$p=1$ a Cl\'ement-type operator~\cite{Clement1975}, and for the higher-order cases an
Ern--Guermond-type operator~\cite{ErnGuermond2017} with the homogeneous boundary condition built
in. Then, we have that the discrete operator~$\discreteL$ coincides with~$\L$ weakly on~$V_h$,
\ie, for any~$v\in H^1_0(\domain)$ we have that (with $\langle \L v, \varphi_h \rangle$
understood as the duality pairing between $H^{-1}(\domain)$ and $H_0^1(\domain)$, which reduces
to the $L^2$-inner product when $v \in D ( \L )$)
\begin{equation}\label{eq:ritz_commutation}
  \langle \L v,\varphi_h \rangle
  = \langle \nabla  v,\nabla \varphi_h \rangle
  = \langle \nabla \ritzprojection v,\nabla \varphi_h \rangle
  = \langle \discreteL \ritzprojection v,\varphi_h \rangle,
  \qquad
  \textrm{ for all }\varphi_h\in V_h ,
\end{equation}
and in particular, for $v \in D ( \L )$,
\begin{equation}\label{eq:fully_discrete_discrete_laplacian_equality}
  \discreteL \ritzprojection v = \ltwoprojection \L v .
\end{equation}

We collect next the approximation and stability properties of these three operators
that the analysis uses. Their proofs and the references for them are given
in~\Cref{app:projections}. Throughout, the constants implied by~$\lesssim$ are independent of
$h$, of the time step $\tau$ and of the time level $n$. For $v\in H^s(\domain)\cap H_0^1(\domain)$ with
$1 \leq s \leq p+1$ and $\Pi \in \{ \ltwoprojection, \ritzprojection, \interpolation \}$,
\begin{equation}\label{eq:fully_discrete_approximation}
  \| v - \Pi v \| + h \, \| \nabla ( v - \Pi v ) \|
  \lesssim
  h^{s} \, | v |_{H^s(\domain)},
  \qquad\text{and}\qquad
  \| \L^{\frac14} ( v - \Pi v ) \|
  \lesssim
  h^{s - \frac12} \, | v |_{H^{s}(\domain)} .
\end{equation}
The analysis further compares discrete
fractional norms of projected functions with continuous ones. For $v \in D ( \L^{a} )$,
\begin{alignat}{2}
  \| \discreteL^{a} \ltwoprojection v \| + \| \discreteL^{a} \interpolation v \|
  &\lesssim \| \L^{a} v \| ,
  &\qquad& 0 \leq a \leq \tfrac12 ,
  \label{eq:fully_discrete_fractional_stability}
  \\
  \| \discreteL^{a} \ritzprojection v \|
  &\leq \| \L^{a} v \| ,
  &\qquad& \tfrac12 \leq a \leq 1 .
  \label{eq:fully_discrete_fractional_stability_ritz}
\end{alignat}
\noindent The Ritz projection is absent
from~\eqref{eq:fully_discrete_fractional_stability} because it is defined only on
$H_0^1(\domain) = D ( \L^{\frac12} )$, which does not contain $D ( \L^{a} )$ for
$a < \tfrac12$. Where a discrete fractional norm of $\ritzprojection v$ below $a = \tfrac12$ is
needed, we use instead the uniform spectral bounds, valid for $\varphi_h \in V_h$, and~$0 \leq a \leq b \leq 1$,
\begin{align}
  \| \discreteL^{a} \varphi_h \|
  &\leq \lambda_1^{a-b} \, \| \discreteL^{b} \varphi_h \| ,  
  \label{eq:discrete_poincare}
  \\
  \| \discreteL^{b} \varphi_h \|
  &\leq \cstinv^{2(b-a)} h^{-2(b-a)} \, \| \discreteL^{a} \varphi_h \| ,
  \label{eq:discrete_inverse}
\end{align}
\noindent where $\lambda_1 > 0$ is the smallest eigenvalue of $\L$ and $\cstinv$ is the constant
of the inverse inequality $\| \nabla \varphi_h \| \leq \cstinv h^{-1} \| \varphi_h \|$, which
depends only on the shape-regularity and quasi-uniformity of the mesh family and on $p$. %
From the estimate~\eqref{eq:discrete_poincare} with $0\leq a \leq b=\frac12$ follows using~\eqref{eq:fully_discrete_fractional_stability_ritz} for $v \in H_0^1(\domain)$ that %
\begin{equation}\label{eq:fully_discrete_fractional_stability_ritz_below_half}
    \| \discreteL^{a} \ritzprojection v \|
  \leq \lambda_1^{a - \frac12} \| \L^{\frac12} v \| ,
\end{equation}
which replaces
\eqref{eq:fully_discrete_fractional_stability_ritz} below $a = \tfrac12$.

Finally, we present a lemma that will be used frequently in the analysis below. The proof is moved to \Cref{app:three_layer} as it consists mostly of standard ideas. Let for any fully discrete sequence~$v_h^{m+1},v_h^{m},v_h^{m-1}\in V_h$ and $\theta \in [\frac14,\frac12]$ the weighted average be defined as 
\begin{equation*}
  v_h^{m+1;\theta} \coloneqq \theta v_h^{m+1} + (1-2\theta) v_h^{m} + \theta v_h^{m-1}.
\end{equation*}

\begin{lemma}[Coercivity of the displacement energy]\label[lemma]{lem:three_layer_displacement_energy}
    Let $\theta \in [\frac14,\frac12]$, $0\leq a \leq \frac12$, and $\tau>0$ be some time step size. Define for any sequence~$u_h^{m+1},u_h^{m},u_h^{m-1}\in V_h$ the functions $v_h^k \coloneqq \discreteL^{a+\frac12} u_h^k$ for $k=m+1,m,m-1$. Then for $\mathcal{W}_h^m = \frac14 \|v_h^m + v_h^{m-1} \|^2 + (\theta - \frac14) \| v_h^m - v_h^{m-1} \|^2$ we have that $\mathcal{W}_h^m \geq 0$, {and it holds}
    \begin{alignat}{2}
        \langle
            v_h^{m+1;\theta},
            v_h^{m+1} - v_h^{m-1}
        \rangle
        &=
        \mathcal{W}_h^{m+1} - \mathcal{W}_h^{m},\label{eq:three_layer_alt}
        \\
        \mathcal{W}_h^m 
        &\leq
        \tfrac12 (\| v_h^m \|^2 + \| v_h^{m-1} \|^2),\label{eq:three_layer_upper}
        \\
        \| v_h^m \|^2
        &{}\leq \tfrac{2}{4\theta-1} \, \mathcal{W}_h^m ,
        &&\qquad  \theta > \tfrac14 ,\label{eq:three_layer_lower}
        \\
        \| v_h^m \|^2
        &{}\leq 2 \mathcal{W}_h^m + \tfrac{\cstinv^2}{2} {\tau^2}{h^{-2}} \| \discreteL^{a} \partial_\tau u_h^{m} \|^2 ,
        &&\qquad  \theta \geq \tfrac14 .\label{eq:three_layer_endpoint}
    \end{alignat}
\end{lemma}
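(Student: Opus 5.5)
The whole lemma is an algebraic statement about three vectors $v_h^{m+1},v_h^m,v_h^{m-1}$ in the Hilbert space $L^2(\domain)$. Only the last inequality uses the structure $v_h^k=\discreteL^{a+\frac12}u_h^k$, through the inverse estimate \eqref{eq:discrete_inverse}. Nonnegativity of $\mathcal{W}_h^m$ is immediate, since it is a sum of squared norms with coefficients $\frac14>0$ and $\theta-\frac14\geq 0$.

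For the identity \eqref{eq:three_layer_alt} I will split the weighted average into a mean part and a second-difference part:
\begin{equation*}
  v_h^{m+1;\theta} = \tfrac14\bigl(v_h^{m+1}+2v_h^m+v_h^{m-1}\bigr) + \bigl(\theta-\tfrac14\bigr)\bigl(v_h^{m+1}-2v_h^m+v_h^{m-1}\bigr),
\end{equation*}
which one checks by comparing coefficients. I pair the first bracket with $v_h^{m+1}-v_h^{m-1}$, written as $(v_h^{m+1}+v_h^m)-(v_h^m+v_h^{m-1})$. The first bracket itself is $(v_h^{m+1}+v_h^m)+(v_h^m+v_h^{m-1})$, so the pairing is $\frac14(\|v_h^{m+1}+v_h^m\|^2-\|v_h^m+v_h^{m-1}\|^2)$. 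Similarly, I pair the second bracket $(v_h^{m+1}-v_h^m)-(v_h^m-v_h^{m-1})$ with $(v_h^{m+1}-v_h^m)+(v_h^m-v_h^{m-1})$. This gives $\|v_h^{m+1}-v_h^m\|^2-\|v_h^m-v_h^{m-1}\|^2$. Adding the two pairings yields $\mathcal{W}_h^{m+1}-\mathcal{W}_h^m$.

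For the two-sided bounds, set $x=v_h^m$, $y=v_h^{m-1}$ and $S=\|x\|^2+\|y\|^2$. Expanding both squares gives
\begin{equation*}
  \mathcal{W}_h^m=\theta S+(1-2\theta)\langle x,y\rangle .
\end{equation*}
Since $1-2\theta\geq 0$, Cauchy--Schwarz and Young give $|\langle x,y\rangle|\leq S/2$. The upper bound then yields $\mathcal{W}_h^m\leq \theta S+(1-2\theta)S/2=S/2$, which is \eqref{eq:three_layer_upper}. The lower bound yields $\mathcal{W}_h^m\geq(2\theta-\frac12)S\geq\frac{4\theta-1}{2}\|x\|^2$, which is \eqref{eq:three_layer_lower} for $\theta>\frac14$.

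For the endpoint estimate \eqref{eq:three_layer_endpoint}, which must also cover $\theta=\frac14$, write $x=\frac12(x+y)+\frac12(x-y)$. Then $\|x\|^2\leq\frac12\|x+y\|^2+\frac12\|x-y\|^2$. Since $\theta\geq\frac14$ we have $\frac14\|x+y\|^2\leq\mathcal{W}_h^m$, so the first term is at most $2\mathcal{W}_h^m$. For the second term, I take $\partial_\tau u_h^m=(u_h^m-u_h^{m-1})/\tau$, so that $x-y=\tau\,\discreteL^{\frac12}\discreteL^{a}\partial_\tau u_h^m$. Applying \eqref{eq:discrete_inverse} with exponents $a$ and $a+\frac12$ (both in $[0,1]$ because $a\leq\frac12$) gives $\|x-y\|\leq\cstinv h^{-1}\tau\|\discreteL^{a}\partial_\tau u_h^m\|$. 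This produces exactly the factor $\frac{\cstinv^2}{2}\tau^2h^{-2}$.

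I do not expect a genuine obstacle. The only point needing care is choosing the decomposition of $v_h^{m+1;\theta}$ so that the telescoping identity drops out. At the degenerate weight $\theta=\frac14$ the energy controls only the mean $v_h^m+v_h^{m-1}$, so the difference must be paid for by the inverse inequality rather than by $\mathcal{W}_h^m$.
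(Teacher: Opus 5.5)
Your proof is correct and follows the paper's argument essentially step by step. It uses the same expanded form $\mathcal{W}_h^m=\theta(\|v_h^m\|^2+\|v_h^{m-1}\|^2)+(1-2\theta)\langle v_h^m,v_h^{m-1}\rangle$ with Young's inequality in both directions, the same half-sum/half-difference split plus the inverse inequality for \eqref{eq:three_layer_endpoint}, and the same mean/second-difference decomposition of $v_h^{m+1;\theta}$ for the telescoping identity. The differences are cosmetic: you apply \eqref{eq:discrete_inverse} with the pair $(a,a+\tfrac12)$ to $\partial_\tau u_h^m$ rather than with $(0,\tfrac12)$ to $\discreteL^{a}\partial_\tau u_h^m$, which is equivalent, and your second pairing should carry the factor $\theta-\tfrac14$ explicitly.
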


\noindent Two consequences of \Cref{lem:three_layer_displacement_energy} are used so often
that we record them separately. They are the two one-sided estimates the energy argument needs,
and they are not an equivalence. At $\theta = \tfrac14$ the displacement energy
$\mathcal{W}_h^m$ alone does not control $\| v_h^m \|^2$, by the sawtooth
of~\Cref{rem:three_layer_sharpness}, and it is the combined displacement-plus-kinetic quantity
$\mathcal{F}_h^m$ {defined below} that does. %

\begin{corollary}[Energy control]\label[corollary]{cor:three_layer_energy_control}
  In the setting of \Cref{lem:three_layer_displacement_energy}, put
  $\mathcal{F}_h^m \coloneqq \| \discreteL^{a} \partial_\tau u_h^{m} \|^2 + \mathcal{W}_h^m$.
  Then
  \begin{alignat}{2}
    \tfrac14 \| \discreteL^{a+\frac12} ( u_h^{m} + u_h^{m-1} ) \|^2
    &\leq \mathcal{W}_h^m ,
    &\qquad& \theta \geq \tfrac14 ,
    \label{eq:energy_sum}
    \\
    \| \discreteL^{a+\frac12} u_h^{m} \|^2
    &\leq \cstcoerc \, \mathcal{F}_h^m ,
    &\qquad&
    \cstcoerc =
    \begin{cases}
      \tfrac{2}{4\theta-1}, & \theta > \tfrac14 , \\[2pt]
      \max \bigl\{ 2, \tfrac{\cstinv^2 \kappa^2}{2} \bigr\}, & \theta = \tfrac14 ,
    \end{cases}
    \label{eq:energy_coercive}
  \end{alignat}
  the second at $\theta = \tfrac14$ under the mesh condition $\tau \leq \kappa h$.
\end{corollary}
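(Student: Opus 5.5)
Both estimates follow directly from \Cref{lem:three_layer_displacement_energy}, applied with $v_h^k = \discreteL^{a+\frac12} u_h^k$. Throughout I use that $\partial_\tau u_h^m = (u_h^m - u_h^{m-1})/\tau$, which is the backward difference implicit in \eqref{eq:three_layer_endpoint}.

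For \eqref{eq:energy_sum} I will use the definition of $\mathcal{W}_h^m$ directly. Since $\theta \geq \tfrac14$, the term $(\theta - \tfrac14)\| v_h^m - v_h^{m-1}\|^2$ is nonnegative and can be dropped. This gives
\begin{equation*}
  \mathcal{W}_h^m \geq \tfrac14 \| v_h^m + v_h^{m-1} \|^2 = \tfrac14 \| \discreteL^{a+\frac12}(u_h^m + u_h^{m-1}) \|^2 ,
\end{equation*}
where the equality uses the linearity of $\discreteL^{a+\frac12}$ on $V_h$.

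For \eqref{eq:energy_coercive} I will split into the two cases of the constant.

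\emph{Case $\theta > \tfrac14$.} Apply \eqref{eq:three_layer_lower}, which gives $\| v_h^m\|^2 \leq \tfrac{2}{4\theta-1}\mathcal{W}_h^m$. The kinetic term $\|\discreteL^{a}\partial_\tau u_h^m\|^2$ is nonnegative, so $\mathcal{W}_h^m \leq \mathcal{F}_h^m$ and the claim follows with $\cstcoerc = \tfrac{2}{4\theta-1}$.

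\emph{Case $\theta = \tfrac14$.} Apply \eqref{eq:three_layer_endpoint} and insert the mesh condition $\tau \leq \kappa h$, which gives $\tau^2 h^{-2} \leq \kappa^2$. Then
\begin{equation*}
  \| v_h^m \|^2 \leq 2\,\mathcal{W}_h^m + \tfrac{\cstinv^2\kappa^2}{2} \|\discreteL^{a}\partial_\tau u_h^m\|^2 \leq \max\bigl\{2, \tfrac{\cstinv^2\kappa^2}{2}\bigr\}\,\mathcal{F}_h^m .
\end{equation*}

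All the real work is in \eqref{eq:three_layer_endpoint}, which is taken as given from the lemma. The endpoint case is the one point needing care: the mesh condition must be used exactly to convert $\tau^2 h^{-2}$ into $\kappa^2$, and the two weights must be matched against the two components of $\mathcal{F}_h^m$ so that a single maximum absorbs both. I expect no further obstacle.
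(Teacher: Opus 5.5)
Your proposal is correct and matches the paper's proof. The paper likewise obtains \eqref{eq:energy_sum} by dropping the nonnegative $(\theta-\tfrac14)$-term in the definition of $\mathcal{W}_h^m$. It gets \eqref{eq:energy_coercive} from \eqref{eq:three_layer_lower} for $\theta>\tfrac14$, with $\mathcal{W}_h^m\le\mathcal{F}_h^m$, and from \eqref{eq:three_layer_endpoint} with $\tau^2h^{-2}\le\kappa^2$ at $\theta=\tfrac14$.
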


\begin{proof}
  \eqref{eq:energy_sum} is the definition of $\mathcal{W}_h^m$ together with
  $\theta \geq \tfrac14$, and \eqref{eq:energy_coercive} is \eqref{eq:three_layer_lower} for
  $\theta > \tfrac14$, respectively \eqref{eq:three_layer_endpoint} with
  $\tau^2 h^{-2} \leq \kappa^2$ at $\theta = \tfrac14$.
\end{proof}

\subsection{Fully discrete scheme and energy conservation}
\noindent Now we may formulate the \emph{fully discrete nonlinear scheme}: We rewrite the nonlinearity as follows 
\begin{equation*}
  \alpha^n + \beta^n 
  \bigl(
    \gamma \| \nabla v \|^2
    + (1 - 2\gamma) \| \nabla u_h^{n} \|^2
    + \gamma \| \nabla u_h^{n-1} \|^2
  \bigr)
  \eqqcolon
  \Gamma_h^{n;\gamma}(v).
\end{equation*}
Given initial conditions~$u_h^0,u_h^1\in V_h$, we seek a sequence of functions~$\{u_h^{n+1}\}_{n=1}^{N-1} \subset V_h$ that solves the following nonlinear equation 
\begin{equation}\label{eq:fully_discrete_nonlinear_scheme}
  \partial_\tau^2 u_h^{n+1}
  +
  \Gamma_h^{n;\gamma}(u_h^{n+1}) \discreteL u_h^{n+1;\theta}
  =
  f_h^{n+1;\eta},
\end{equation}
where the weighted average~$u_h^{n+1;\theta}$ is given as above, and analogously $f_h^{n+1;\eta} \coloneqq \eta f_h^{n+1} + (1-2\eta) f_h^{n} + \eta f_h^{n-1}$, with~$\gamma,\eta \in [0,\frac12]$ and~$\theta\in[\frac14,\frac12]$, and~$f_h^n = \interpolation f(n\tau)$ with an interpolation operator~$\interpolation$. We may use for the lowest order case~$p=1$ a Cl\'ement-type quasi-interpolation operator, and for the higher-order cases an Ern--Guermond-type quasi-interpolation operator. %
Note that for $\gamma=0$, the scheme reduces to the locally linear scheme from~\cite{KirchhoffTime}. In similar fashion, the scheme in \cite{KirchhoffTime} may also be extended by considering parameters $\gamma, \eta$, and $\theta$ as above.

Before turning to the convergence analysis we record that the scheme enjoys the following conservation property.

\begin{proposition}[Fully discrete energy conservation]\label[proposition]{prop:fully_discrete_conservation}
  Let $\theta = \tfrac12$, $\alpha$ and $\beta$ be constant, and $f \equiv 0$. Further, let $\{ u_h^{n} \}_{n=0}^{N} \subset V_h$ solve the
  nonlinear scheme~\eqref{eq:fully_discrete_nonlinear_scheme} exactly, and let $V_h$ be any
  conforming subspace of $H_0^1(\domain)$. Define the fully discrete energy 
\begin{align}\label{eq:energy-gamma}
  \mathcal{E}_{h}^{n;\gamma} &\coloneqq \tfrac12\bigl\|\partial_\tau u_h^n\bigr\|^2
   + \tfrac{\alpha}{4}\bigl(\bigl\|\nabla u_h^{n-1}\bigr\|^2+\bigl\|\nabla u_h^{n}\bigr\|^2\bigr)\nonumber \\
   &+ \tfrac{\beta}{4}\bigl[\bigl\|\nabla u_h^{n-1}\bigr\|^2\bigl\|\nabla u_h^{n}\bigr\|^2
     + \gamma\bigl(\bigl\|\nabla u_h^{n}\bigr\|^2-\bigl\|\nabla u_h^{n-1}\bigr\|^2\bigr)^{2}\bigr].
\end{align}
Then $\mathcal{E}_h^{n+1;\gamma}=\mathcal{E}_h^{n;\gamma}$ for $n=1,\dots,N-1$, and
consequently $\mathcal{E}_h^{n;\gamma}=\mathcal{E}_h^{1;\gamma}$ for $n=1,\dots,N$.
\end{proposition}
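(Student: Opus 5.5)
The plan is the discrete analogue of testing the wave equation with $\partial_t u$: test \eqref{eq:fully_discrete_nonlinear_scheme} with $\varphi_h = u_h^{n+1} - u_h^{n-1} \in V_h$. This is a symmetric, centred difference, which is admissible because $V_h$ is a subspace. Since $f \equiv 0$, we have $f_h^{k} = \interpolation 0 = 0$, so the right-hand side vanishes for every $\eta$. For $\theta = \tfrac12$ the weighted average is $u_h^{n+1;\theta} = \tfrac12 ( u_h^{n+1} + u_h^{n-1} )$. Throughout we abbreviate $a_k \coloneqq \| \nabla u_h^{k} \|^2$.

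\emph{Kinetic term.} Write $\partial_\tau^2 u_h^{n+1} = ( \partial_\tau u_h^{n+1} - \partial_\tau u_h^{n} )/\tau$ and $u_h^{n+1} - u_h^{n-1} = \tau ( \partial_\tau u_h^{n+1} + \partial_\tau u_h^{n} )$. Their product is a difference of squares, which gives
\begin{equation*}
  \langle \partial_\tau^2 u_h^{n+1}, u_h^{n+1} - u_h^{n-1} \rangle = \| \partial_\tau u_h^{n+1} \|^2 - \| \partial_\tau u_h^{n} \|^2 .
\end{equation*}

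\emph{Stiffness term.} By the definition \eqref{eq:discrete_laplacian} of $\discreteL$,
\begin{equation*}
  \langle \discreteL \tfrac12 ( u_h^{n+1} + u_h^{n-1} ), u_h^{n+1} - u_h^{n-1} \rangle = \tfrac12 ( a_{n+1} - a_{n-1} ) .
\end{equation*}
This inner product is multiplied by the scalar $\Gamma_h^{n;\gamma}(u_h^{n+1}) = \alpha + \beta ( \gamma a_{n+1} + (1-2\gamma) a_n + \gamma a_{n-1} )$, which is where constancy of $\alpha$ and $\beta$ enters. The identity to be shown is therefore
\begin{equation*}
  \| \partial_\tau u_h^{n+1} \|^2 - \| \partial_\tau u_h^{n} \|^2 + \tfrac12 \Gamma_h^{n;\gamma}(u_h^{n+1}) \, ( a_{n+1} - a_{n-1} ) = 0 ,
\end{equation*}
and it remains to recognise the left-hand side as $2 ( \mathcal{E}_h^{n+1;\gamma} - \mathcal{E}_h^{n;\gamma} )$.

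\emph{The $\alpha$-part} is immediate by telescoping:
\begin{equation*}
  \tfrac{\alpha}{2} ( a_{n+1} - a_{n-1} ) = 2 \cdot \tfrac{\alpha}{4} \bigl[ ( a_{n+1} + a_n ) - ( a_n + a_{n-1} ) \bigr] .
\end{equation*}

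\emph{The $\beta$-part} is the only step requiring care, and it is where the $\gamma$-dependent correction in \eqref{eq:energy-gamma} comes from. Set $d = a_{n+1} - a_{n-1}$. I would verify the algebraic identity
\begin{equation*}
  a_n d + \gamma \bigl[ ( a_{n+1} - a_n )^2 - ( a_n - a_{n-1} )^2 \bigr] = d \bigl( \gamma a_{n+1} + (1-2\gamma) a_n + \gamma a_{n-1} \bigr),
\end{equation*}
using $a_{n+1} a_n - a_n a_{n-1} = a_n d$ and factoring the difference of squares as $d \, ( a_{n+1} - 2 a_n + a_{n-1} )$. Multiplying by $\tfrac{\beta}{2}$ shows that the $\beta$-part of the tested equation equals twice the increment of the bracketed $\beta$-term in \eqref{eq:energy-gamma}.

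Adding the three contributions yields $2 ( \mathcal{E}_h^{n+1;\gamma} - \mathcal{E}_h^{n;\gamma} ) = 0$ for each $n = 1, \dots, N-1$. Induction on $n$ then gives $\mathcal{E}_h^{n;\gamma} = \mathcal{E}_h^{1;\gamma}$ for all $n$. No property of $V_h$ beyond conformity, and hence the Galerkin identity \eqref{eq:discrete_laplacian}, is used.
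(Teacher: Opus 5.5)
Your proof is correct and follows the paper's argument. You test with $u_h^{n+1}-u_h^{n-1}$, write the kinetic term as a difference of squares, evaluate the stiffness term through the Galerkin identity for $\mathcal{L}_h$, and then identify the $\beta$-part with the energy increment. The only cosmetic difference is in that last step: the paper rearranges and uses $\gamma(a^2+b^2)+(1-2\gamma)ab = ab+\gamma(a-b)^2$, whereas you verify the increment directly by factoring $(a_{n+1}-a_n)^2-(a_n-a_{n-1})^2 = d\,(a_{n+1}-2a_n+a_{n-1})$.
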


\begin{proof}
With $\theta=\tfrac12$ we have $u_h^{n+1;\theta}=\tfrac12\bigl(u_h^{n+1}+u_h^{n-1}\bigr)$. %
Taking $u_h^{n+1}-u_h^{n-1}$ as a test function in \eqref{eq:fully_discrete_nonlinear_scheme} gives
\begin{equation}
\label{eq:tested}
  \bigl\langle \partial_\tau^2 u_h^{n+1},\,u_h^{n+1}-u_h^{n-1}\bigr\rangle
  + \tfrac{\Gamma_h^{n;\gamma}(u_h^{n+1})}{2}\bigl\langle \discreteL\bigl(u_h^{n+1}+u_h^{n-1}\bigr),\,
      u_h^{n+1}-u_h^{n-1}\bigr\rangle = 0 .
\end{equation}
Throughout we abbreviate $\mathcal{V}_n\eqqcolon \tfrac12\|\nabla u_h^n\|^2$ and
$\mathcal{K}_n\eqqcolon \|\partial_\tau u_h^n\|^2$. For the first term in~\eqref{eq:tested}, write $u_h^{n+1}-2u_h^{n}+u_h^{n-1}
=\tau\bigl(\partial_\tau u_h^{n+1}-\partial_\tau u_h^{n}\bigr)$ and
$u_h^{n+1}-u_h^{n-1}=\tau\bigl(\partial_\tau u_h^{n+1}+\partial_\tau u_h^{n}\bigr)$,
so that it equals $\mathcal{K}_{n+1}-\mathcal{K}_n$. For the second, both arguments lie in $V_h$, so the
definition \eqref{eq:discrete_laplacian} of $\discreteL$ applies and turns it into
$\bigl\langle\nabla(u_h^{n+1}+u_h^{n-1}),\nabla(u_h^{n+1}-u_h^{n-1})\bigr\rangle
=\|\nabla u_h^{n+1}\|^2-\|\nabla u_h^{n-1}\|^2=2(\mathcal{V}_{n+1}-\mathcal{V}_{n-1})$.
Hence \eqref{eq:tested} becomes
\begin{equation}
\label{eq:tested-V}
  \mathcal{K}_{n+1}-\mathcal{K}_{n}+\Gamma_h^{n;\gamma}(u_h^{n+1})\bigl(\mathcal{V}_{n+1}-\mathcal{V}_{n-1}\bigr)=0 .
\end{equation}
Inserting $\Gamma_h^{n;\gamma}(u_h^{n+1})
  = \alpha+2\beta(\gamma\,\mathcal{V}_{n+1}+(1-2\gamma)\,\mathcal{V}_{n}+\gamma\,\mathcal{V}_{n-1})$ into
\eqref{eq:tested-V} therefore yields
\[
  \bigl(\mathcal{K}_{n+1}-\mathcal{K}_{n}\bigr)
  + \alpha\bigl(\mathcal{V}_{n+1}-\mathcal{V}_{n-1}\bigr)
  + 2\beta\gamma\bigl(\mathcal{V}_{n+1}^2-\mathcal{V}_{n-1}^2\bigr)
  + 2\beta(1-2\gamma)\mathcal{V}_n\bigl(\mathcal{V}_{n+1} - \mathcal{V}_{n-1}\bigr) = 0 .
\]
Dividing by $2$ and adding $\frac\alpha2\mathcal{V}_n$ and $\beta\gamma\mathcal{V}_n^2$ on both sides yields 
\begin{multline}\label{eq:energy_gamma_alt}
  \tfrac12 \mathcal{K}_{n+1}
  + \tfrac\alpha2\bigl(\mathcal{V}_{n+1}+\mathcal{V}_{n}\bigr)
  + \beta\gamma\bigl(\mathcal{V}_{n+1}^2+\mathcal{V}_{n}^2\bigr)
  + \beta(1-2\gamma)\mathcal{V}_{n+1}\mathcal{V}_{n}
  \\
  = \tfrac12 \mathcal{K}_{n}
  + \tfrac\alpha2\bigl(\mathcal{V}_{n}+\mathcal{V}_{n-1}\bigr)
  + \beta\gamma\bigl(\mathcal{V}_{n}^2+\mathcal{V}_{n-1}^2\bigr)
  + \beta(1-2\gamma)\mathcal{V}_{n}\mathcal{V}_{n-1}.
\end{multline}
Applying the elementary identity $\gamma\bigl(a^{2}+b^{2}\bigr)+(1-2\gamma)ab = ab+\gamma(a-b)^{2}$ for $a,b\in\mathbb{R}$ to the last two terms on both sides, then substituting back $\mathcal{V}_\cdot$ yields the energy identity $\mathcal{E}_h^{n+1;\gamma}=\mathcal{E}_h^{n;\gamma}$.
\end{proof}

\begin{remark}[Scope of \Cref{prop:fully_discrete_conservation}]\label[remark]{rem:conservation_scope}
  The identity holds for the exact solution of the nonlinear
  scheme~\eqref{eq:fully_discrete_nonlinear_scheme}. Any approximation of it such as the Picard
  iterates defined below satisfies it only up to the
  nonlinear residual. So an implementation conserves $\mathcal{E}_h^{n}$ only to the tolerance at which the nonlinear system is solved and, in floating point, to the accuracy of the linear solves. \\
    For $\beta\ge 0$ and any $\gamma\in[0,\tfrac12]$ every term of \eqref{eq:energy-gamma} is
    nonnegative, and by \eqref{eq:energy_gamma_alt},
    \[
      \mathcal{E}_h^{n;\gamma}\ \ge\
      \tfrac12\|\partial_\tau u_h^n\|^2
      + \tfrac{\alpha}{4}(\|\nabla u_h^{n-1}\|^2+\|\nabla u_h^{n}\|^2)
      + \tfrac{\beta\gamma}{4}(\|\nabla u_h^{n-1}\|^4+\|\nabla u_h^{n}\|^4),
    \]
    so that \Cref{prop:fully_discrete_conservation} yields bounds on
    $\|\partial_\tau u_h^n\|$ and $\|\nabla u_h^n\|$ that are uniform in $n$ and independent of
    $\tau$ and $h$. The quartic bound degenerates as $\gamma\to 0$, however, for $\alpha>0$ this is
    immaterial, since the $\|\nabla u_h^n\|^2$ terms already provide the required control. %
    
      The hypothesis $\theta=\tfrac12$ enters only through
  $u_h^{n+1;\theta} = \tfrac12 ( u_h^{n+1} + u_h^{n-1} )
   + ( \theta - \tfrac12 ) \tau^2 \partial_\tau^2 u_h^{n+1}$.
  Testing~\eqref{eq:fully_discrete_nonlinear_scheme} with $u_h^{n+1}-u_h^{n-1}$ as in the proof
  above, the second summand contributes
  $( \theta - \tfrac12 ) \tau^2 \Gamma_h^{n;\gamma}(u_h^{n+1}) ( D_{n+1} - D_{n} )$ with
  $D_m \coloneqq \| \nabla \partial_\tau u_h^{m} \|^2$, so that
  \begin{equation}\label{eq:energy_theta_defect}
    \mathcal{E}_h^{n+1;\gamma} - \mathcal{E}_h^{n;\gamma}
    =
    \tfrac12 \bigl( \tfrac12 - \theta \bigr) \tau^2 \,
    \Gamma_h^{n;\gamma}(u_h^{n+1}) \, \bigl( D_{n+1} - D_{n} \bigr) .
  \end{equation}
  This additional term does not telescope, because the coefficient $\Gamma_h^{n;\gamma}$ depends on the
  solution and hence on $n$. Summation by parts leaves
  $\Gamma_h^{n;\gamma} D_{n+1} - \Gamma_h^{1;\gamma} D_{1}
   - \sum_{m=2}^{n} ( \Gamma_h^{m;\gamma} - \Gamma_h^{m-1;\gamma} ) D_m$, whose last sum is
  $\bigO ( \tau \cdot n ) = \bigO ( T )$ under the assumption by the increment bound
  $| \Gamma_h^{m;\gamma} - \Gamma_h^{m-1;\gamma} | \lesssim \tau$
  of~\eqref{eq:cons_gamma}. %
    Note that this uses bounds that are not available at this point. We assume that
  $\Gamma_h^{m;\gamma}(u_h^{m+1})$ and $D_m$ are bounded uniformly in $m$, and that the
  increment bound above holds. The first and the third are of the kind established
  in~\Cref{sec:analysis} for the discrete solution, and the second is a half order stronger than
  the bounds used there, since $D_m = \| \discreteL^{\frac12} \partial_\tau u_h^m \|^2$. The
  estimate is therefore formal, and we do not use it anywhere below. %
  Hence, for $\theta \in [ \tfrac14, \tfrac12 )$ the discrete energy
  is not conserved but satisfies
  \begin{equation}\label{eq:energy_theta_order}
    \max_{1 \leq n \leq N} \bigl| \mathcal{E}_h^{n;\gamma} - \mathcal{E}_h^{1;\gamma} \bigr|
    \lesssim
    \bigl( \tfrac12 - \theta \bigr) \tau^2 ,
  \end{equation}
  that is, it is conserved to second order in $\tau$ with a constant growing linearly in $T$. This is confirmed
  numerically in~\Cref{sec:numerics}.
\end{remark}

\begin{remark}[The cases $\gamma=0$ and $\gamma=\frac12$]
\label[remark]{rem:gamma-half}
  The invariant $\mathcal{E}_h^{n;\gamma}$ is the fully discrete analogue of $\widetilde{E}_{1,n}$, and $\widetilde{E}_{2,n}$
  in~\cite{KirchhoffTime}. That is, for $\gamma\in\{0,\frac12\}$ the equations coincide up to the spatial discretization. Thus, we have that the energy $\mathcal{E}_h^{n;\gamma}$ does not conserve the continuous energy $E(t_n)$. %
For $\gamma=0$ the coefficient $\Gamma_h^{n;0}(u_h^{n+1})$ does not involve $u_h^{n+1}$, so that \eqref{eq:fully_discrete_nonlinear_scheme} is linearly implicit and Picard iteration is not needed. Nonetheless, \Cref{prop:fully_discrete_conservation} shows that this energy is conserved. 
  For general $\gamma$, comparing with the case $\gamma = \tfrac12$ gives 
\begin{equation}
\label{eq:energy-difference}
  \mathcal{E}_{h}^{n;\frac12}-\mathcal{E}_h^{n;\gamma}
  = \tfrac{\beta(1-2\gamma)}{8}
    (\|\nabla u_h^{n}\|^2-\|\nabla u_h^{n-1}\|^2)^{2}.
\end{equation}%
Since
$\|\nabla u_h^{n}\|^2-\|\nabla u_h^{n-1}\|^2
=\tau\bigl\langle\nabla(u_h^{n}+u_h^{n-1}),\nabla\partial_\tau u_h^{n}\bigr\rangle$,
the difference \eqref{eq:energy-difference} is of order $\tau^{2}$. Moreover, it is itself a
function of the two levels $n-1,n$ only, whence
\[
  \mathcal{E}_{h}^{n;\frac12}
  = \mathcal{E}_{h}^{1;\frac12}
    + \tfrac{\beta(1-2\gamma)}{8}[
      (\|\nabla u_h^{n}\|^2-\|\nabla u_h^{n-1}\|^2)^{2}
      -(\|\nabla u_h^{1}\|^2-\|\nabla u_h^{0}\|^2)^{2}],
  \qquad n=1,\dots,N.
\]
Thus, for $\gamma\neq\tfrac12$ the quantity $\mathcal{E}_{h}^{n;\frac12}$ is not conserved, but its
deviation is bounded by $\bigO(\tau^{2})$ uniformly in $n$ and does not accumulate over
$[0,T]$.
\end{remark}

\section{Analysis of the fully discrete scheme}\label{sec:analysis}

\noindent To solve the nonlinear scheme \eqref{eq:fully_discrete_nonlinear_scheme} at every time step, we apply a Picard iteration that leads to solving a linear equation for every iterate. That is, we seek a sequence of functions~$\{u_{h;\ell +1}^{n+1}\}_{\ell \geq 0} \subset V_h$ that solves the \emph{fully discrete linear iteration scheme} 
\begin{equation}\label{eq:fully_discrete_nonlinear_iteration_scheme}
  u_{h;\ell +1}^{n+1} + \tau^2 \theta \Gamma_{h}^{n;\gamma}(u_{h;\ell }^{n+1}) \discreteL u_{h;\ell +1}^{n+1}
  = \tau^2 f_h^{n+1;\eta} + 2u_h^n - u_h^{n-1}
  - \tau^2 \Gamma_{h}^{n;\gamma}(u_{h;\ell }^{n+1}) \discreteL ((1 - 2\theta) u_h^n + \theta u_{h}^{n-1})
\end{equation}
with the initial guess~$u_{h;0}^{n+1} = u_h^n$. %
In this section we will prove that this scheme admits a unique solution trajectory over the whole time interval $[0,T]$ where the exact solution is bounded. This is based on a contraction principle (fixed point argument) of the Picard iteration with the following linear operator %
\begin{equation*}
  \T_{v} = \mathcal{I} + \tau^2 \theta \Gamma_h^{n;\gamma}(v) \discreteL.
\end{equation*}
Then, the Picard iteration seeks a fixed point of (note that $\theta \geq \frac14$)
\begin{equation}\label{eq:fully_discrete_fixed_point_iteration}
  v = F(v) = \T_v^{-1} g_h^{n} + (2 - \tfrac1\theta) u_h^n - u_h^{n-1},
  \qquad
  g_h^n
  \coloneqq
  \tau^2 f_h^{n+1;\eta}
  +
  \tfrac1\theta u_h^n.
\end{equation}

\subsection{Local existence of a solution}
\noindent As a first step we show that the nonlinear equation~\eqref{eq:fully_discrete_nonlinear_scheme} has a unique solution at each time level, and that the Picard iteration converges to it. Both follow from a contraction principle applied to the fixed point map~\eqref{eq:fully_discrete_fixed_point_iteration}, whose fixed point is that solution. This is formulated in the following lemma. %
\begin{lemma}[Existence of a solution of the nonlinear discrete equation]\label[lemma]{lem:fully_discrete_local_existence}
  Let~\Cref{ass:existence_exact_solution} hold. Further, let the fully discrete solutions be bounded by 
  \begin{equation}\label{eq:fully_discrete_bounds_previous_time_steps}
    \begin{alignedat}{2}
      \| \discreteL^\frac34 u_h^n \|_{L^2(\domain)} &\leq M_\frac34,
      \qquad
      \| \discreteL^\frac14 u_h^n \|_{L^2(\domain)} &&\leq M_\frac14,
      \qquad
      \| \discreteL^\frac14 \partial_\tau u_h^n \|_{L^2(\domain)} \leq M'_\frac14,
      \\
      \| \discreteL^\frac34 u_h^{n-1} \|_{L^2(\domain)} &\leq M_\frac34,
      \qquad
      \| \discreteL^\frac14 u_h^{n-1} \|_{L^2(\domain)} &&\leq M_\frac14
    \end{alignedat}
  \end{equation}
  for some~$1\leq n \leq N-1$. 
  Then there exists $\tau_0>0$, depending only on $\alpha_{\min}$, $\beta_{\max}$, $\gamma$, $\theta$,
  the final time~$T$ and the constants $M_{\frac14}, M_{\frac34}, C_{\frac14}$ of
  \Cref{ass:existence_exact_solution}, but \emph{not} on the time step $n$, the time step
  size $\tau$ or the mesh size $h$, such that for every $0 < \tau \leq \tau_0$
  the nonlinear equation~\eqref{eq:fully_discrete_nonlinear_scheme} has a unique solution~$u_h^{n+1}\in V_h$, to which the Picard iteration~\eqref{eq:fully_discrete_nonlinear_iteration_scheme} converges. Further, for the following norms we have that 
  \begin{equation*}
    \| \discreteL^\frac34 u_h^{n+1} \|_{L^2(\domain)} \lesssim 1,
    \qquad
    \| \discreteL^\frac14 u_h^{n+1} \|_{L^2(\domain)} \lesssim 1,
    \qquad
    \| \discreteL^\frac14 \partial_\tau u_h^{n+1} \|_{L^2(\domain)} \lesssim 1
  \end{equation*}
  hold independent of the time step~$n$, the time step size~$\tau$, and the mesh size~$h$.
\end{lemma}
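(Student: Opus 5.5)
We prove the lemma by applying Banach's fixed point theorem to the map $F$ of \eqref{eq:fully_discrete_fixed_point_iteration} on a closed ball of $V_h$ in the $\discreteL^{\frac34}$-norm centred at $u_h^n$:
\[
  B \coloneqq \bigl\{ v \in V_h : \| \discreteL^{\frac34} ( v - u_h^n ) \| \leq \rho \bigr\},
\]
with a radius $\rho$ chosen in terms of $M_{\frac34}, M_{\frac14}, M'_{\frac14}, C_{\frac14}$ only.

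First we record the structure of $\T_v$. For every $v \in V_h$ we have $\Gamma_h^{n;\gamma}(v) \geq \alpha_{\min} > 0$, and $\T_v$ is a function of the self-adjoint positive operator $\discreteL$. Hence $\T_v^{-1}$ commutes with every $\discreteL^{a}$ and satisfies $\| \T_v^{-1} \| \leq 1$. The same spectral calculus gives the smoothing bound
\[
  \| \tau^2 \discreteL \T_v^{-1} \| \leq \bigl( \theta \alpha_{\min} \bigr)^{-1},
  \qquad
  \| \tau \discreteL^{\frac12} \T_v^{-1} \| \lesssim \alpha_{\min}^{-\frac12},
\]
uniformly in $h$. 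Neither bound requires a CFL condition.

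Next we rewrite $F(v) - u_h^n$ so that it is visibly small. From \eqref{eq:fully_discrete_nonlinear_iteration_scheme}, with $w \coloneqq F(v) - u_h^n$, we get
\[
  \T_v w = \tau \partial_\tau u_h^n + \tau^2 f_h^{n+1;\eta} - \tau^2 \Gamma_h^{n;\gamma}(v) \discreteL u_h^n + \tau^2 \theta \Gamma_h^{n;\gamma}(v) \discreteL \bigl( u_h^n - u_h^{n-1} \bigr).
\]
This follows by subtracting $\T_v u_h^n$ and regrouping the terms. Apply $\discreteL^{\frac34}$ and estimate each term as follows.

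The term $\tau \discreteL^{\frac34} \T_v^{-1} \partial_\tau u_h^n$ is bounded by $\| \tau \discreteL^{\frac12} \T_v^{-1} \| \, M'_{\frac14} \lesssim M'_{\frac14}$. This is only $\bigO(1)$ and not small in $\tau$. Therefore we do not aim for $\rho \to 0$. Instead we take $\rho$ a fixed multiple of the data, namely $\rho \simeq M'_{\frac14} + M_{\frac34} + C_{\frac14}$.

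For the source term, \eqref{eq:fully_discrete_fractional_stability} gives $\| \discreteL^{\frac14} f_h^{n+1;\eta} \| \lesssim C_{\frac14}$. Combined with $\| \tau^2 \discreteL^{\frac12} \T_v^{-1} \| \lesssim \tau$, this makes the source term $\bigO(\tau)$.

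The $\Gamma$ terms are handled with $\| \tau^2 \discreteL \T_v^{-1} \| \, \Gamma_h^{n;\gamma}(v) \leq 1/\theta$. They are bounded by multiples of $M_{\frac34}$, because $\Gamma \tau^2 \discreteL \T_v^{-1}$ is uniformly bounded.

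Here $\Gamma_h^{n;\gamma}(v)$ itself is bounded on $B$. By \eqref{eq:discrete_poincare}, $\| \nabla v \| = \| \discreteL^{\frac12} v \| \lesssim \| \discreteL^{\frac34} v \|$, so $\Gamma_h^{n;\gamma}(v) \leq \Gamma_{\max}$ with $\Gamma_{\max}$ depending only on $\alpha$, $\beta_{\max}$, $M_{\frac34}$ and $\rho$.

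This shows $F(B) \subset B$ once $\rho$ is fixed accordingly. The a priori bound on $\| \discreteL^{\frac34} u_h^{n+1} \|$ follows at once. The bound on $\| \discreteL^{\frac14} u_h^{n+1} \|$ then follows from \eqref{eq:discrete_poincare}.

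The bound on $\| \discreteL^{\frac14} \partial_\tau u_h^{n+1} \| = \tau^{-1} \| \discreteL^{\frac14} w \|$ is the delicate one. We use the same identity, now measured in $\discreteL^{\frac14}$ and divided by $\tau$. The term $\partial_\tau u_h^n$ contributes $M'_{\frac14}$. The $\Gamma$ terms are controlled through $\| \tau \discreteL^{\frac12} \T_v^{-1} \| \, \Gamma \, \| \discreteL^{\frac34} \cdot \| \lesssim \Gamma_{\max}^{1/2} M_{\frac34}$, using the $\alpha_{\min}^{-\frac12}$-type bound $\| \tau \discreteL^{\frac12} \T_v^{-1} \| \leq (\theta \Gamma)^{-\frac12}$.

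For the contraction, let $v_1, v_2 \in B$. Then
\[
  F(v_1) - F(v_2) = \bigl( \T_{v_1}^{-1} - \T_{v_2}^{-1} \bigr) g_h^n
  = \tau^2 \theta \bigl( \Gamma(v_2) - \Gamma(v_1) \bigr) \, \T_{v_1}^{-1} \discreteL \T_{v_2}^{-1} g_h^n .
\]
We have $| \Gamma(v_1) - \Gamma(v_2) | \leq \beta_{\max} \gamma \, \| \nabla ( v_1 + v_2 ) \| \, \| \nabla ( v_1 - v_2 ) \| \lesssim \| \discreteL^{\frac34} ( v_1 - v_2 ) \|$, again by \eqref{eq:discrete_poincare}.

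The main obstacle is to extract a positive power of $\tau$ from $\tau^2 \discreteL^{\frac34} \T_{v_1}^{-1} \discreteL \T_{v_2}^{-1} g_h^n$ without an inverse inequality. Note that $g_h^n$ contains $\tfrac1\theta u_h^n$, and $u_h^n$ is only bounded in $\discreteL^{\frac34}$.

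The first attempt is the following. Write $\T_{v_2}^{-1} g_h^n = F(v_2) - (2 - \tfrac1\theta) u_h^n + u_h^{n-1}$, which is bounded in $\discreteL^{\frac34}$ by the invariance step. Then use $\| \tau^{\frac12} \discreteL^{\frac14} \T_{v_1}^{-1} \| \lesssim 1$ together with $\| \tau^{\frac32} \discreteL^{\frac34} \T^{-1} \| \lesssim 1$, so that
\[
  \| \tau^2 \discreteL \T_{v_1}^{-1} \discreteL^{\frac34} \psi \| \leq \tau^{\frac12} \, \| \tau^{\frac32} \discreteL^{\frac34} \T_{v_1}^{-1} \| \, \| \discreteL^{\frac34} \psi \| ,
\]
where we would accept losing $\discreteL^{\frac14}$ by keeping $\psi$ measured at the $\frac34$ level.

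This yields a Lipschitz constant $\lesssim \tau^{\frac12}$, depending only on $\alpha_{\min}$, $\beta_{\max}$, $\gamma$, $\theta$ and the $M$'s and $C_{\frac14}$. The $T$-dependence enters through $\alpha_{\min}$ and $\beta_{\max}$ on $[0,T]$. Choosing $\tau_0$ so that this constant is at most $\frac12$ gives the contraction.

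If this power counting fails, the fallback is to perform the contraction in the weaker $\discreteL^{\frac12}$-norm on the set $B$. The set $B$ is still closed in that norm since $V_h$ is finite-dimensional. In the weaker norm the spare $\discreteL^{\frac14}$ becomes available to produce the factor $\tau^{\frac12}$.

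Finally, uniqueness in $B$ and convergence of the Picard iterates started from $u_{h;0}^{n+1} = u_h^n \in B$ follow from Banach's theorem. Any solution of \eqref{eq:fully_discrete_nonlinear_scheme} is a fixed point of $F$. To exclude solutions outside $B$, we apply the a priori estimate from the invariance step to an arbitrary fixed point: it bounds $\| \discreteL^{\frac34}(v - u_h^n) \|$ by the same data with $\Gamma(v)$ appearing only through the bounded products $\Gamma \tau^2 \discreteL \T_v^{-1}$, so every fixed point lies in $B$.
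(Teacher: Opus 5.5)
Your contraction step in the $\discreteL^{\frac34}$-norm does not work. The displayed estimate is false. Factoring $\tau^2\discreteL = \tau^{\frac12}\,(\tau^{\frac32}\discreteL^{\frac34})\,\discreteL^{\frac14}$ leaves $\|\discreteL\psi\|$ on the right-hand side, not $\|\discreteL^{\frac34}\psi\|$, and $\|\discreteL\psi\|$ is not bounded uniformly in $h$.

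Changing the bookkeeping cannot repair this in that norm. The norm $\|\tau^2\discreteL\T_{v_1}^{-1}\| = \sup_\lambda \tau^2\lambda/(1+\tau^2\theta\Gamma_h^{n;\gamma}(v_1)\lambda)$ is of size $(\theta\Gamma_h^{n;\gamma}(v_1))^{-1}$ as soon as $\discreteL$ has eigenvalues $\lambda\gtrsim\tau^{-2}$, i.e.\ whenever $h\lesssim\tau$. On those modes $\T_{v_2}^{-1}g_h^n$ is damped only by an $\mathcal{O}(1)$ factor. So with $|\Gamma_h^{n;\gamma}(v_1)-\Gamma_h^{n;\gamma}(v_2)|\lesssim\|\discreteL^{\frac34}(v_1-v_2)\|$, the best Lipschitz bound you can obtain in the $\discreteL^{\frac34}$-norm is $\mathcal{O}(1)$, not small in $\tau$. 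Making it small would need small data, or the inverse inequality \eqref{eq:discrete_inverse}. The latter forces a condition $\tau\lesssim h$, which contradicts the $h$-independence of $\tau_0$ asserted in the lemma.

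Your fallback is the actual proof, and it is the paper's mechanism. Keep the invariant set bounded in $\discreteL^{\frac34}$, but measure increments in a weaker norm. Then \eqref{eq:fully_discrete_nonlinearity_proportional} puts the strong norm on $v_1+v_2$ and the weak norm on $v_1-v_2$.

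In $\discreteL^{\frac12}$ the critical term is $\tau^2\|\discreteL^{\frac32}\T_{v_1}^{-1}\T_{v_2}^{-1}u_h^n\|\le\tau^2\|\discreteL^{\frac34}\T_{v_1}^{-1}\|\,\|\discreteL^{\frac34}u_h^n\|\lesssim\tau^{\frac12}$, so you get a Lipschitz constant $\lesssim\tau^{\frac12}$. The paper goes down to $\discreteL^{\frac14}$, using the proportionality estimate with $a=\frac14$. It needs $\|\discreteL^{\frac54}\T_{\psi_h}^{-1}g_h^n\|\lesssim\tau^{-1}$ and gains a full factor $\tau$. Either choice closes the argument, so present it as the main line and discard the $\discreteL^{\frac34}$ attempt.

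The remaining parts are sound. Your invariance bound is independent of the preimage, since the $\Gamma$-terms enter only through $\|\tau^2\Gamma_h^{n;\gamma}(v)\discreteL\T_v^{-1}\|\le\frac1\theta$. This gives uniqueness in all of $V_h$ exactly as in the paper. Your bound on $\|\discreteL^{\frac14}\partial_\tau u_h^{n+1}\|$, by a direct spectral estimate of $\tau^{-1}\discreteL^{\frac14}(u_h^{n+1}-u_h^n)$, is a valid alternative to the paper's energy test.

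One minor point: centring $B$ at $u_h^n$ makes $\rho$, and hence $\tau_0$, depend on $M'_{\frac14}$ as well, which is not in the lemma's stated dependence list. Bounding $\|\discreteL^{a}F(v)\|$ directly through $\|\T_v^{-1}\|\le 1$, as the paper does, avoids this.
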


\begin{proof}
  We show this lemma using a contraction principle on the mapping~$F$ introduced in~\eqref{eq:fully_discrete_fixed_point_iteration}. %
  We first show that $F$ is a self-map on a set bounded in both the $\discreteL^\frac34$- and
  the $\discreteL^\frac14$-norm, with bounds independent of the preimage: the contraction is
  performed in the $\discreteL^\frac14$-norm, and the $\discreteL^\frac34$-bound is what the
  error estimate below needs. Let~$a \in \{ \frac14,\frac34\}$. Using~$\frac1\theta \geq 2$, we have for any~$\varphi_h\in V_h$ that 
  \begin{align*}
    &\| \discreteL^a  F(\varphi_h)  \| 
    =
    \|
    \discreteL^a  \T_{\varphi_h}^{-1}
    \bigl( 
      \tau^2 f_h^{n+1;\eta}
      + \tfrac1\theta u_h^n 
    \bigr)
    +
    \discreteL^a
    ((2 - \tfrac1\theta) u_h^n - u_h^{n-1})  \| 
    \\
      &\qquad\leq 
      \tau^{-2(\frac14 - a)}  \| \discreteL^{a-\frac14} \T_{\varphi_h}^{-1} \|
      \| \tau^{2(\frac54 - a)} \discreteL^{\frac14} f_h^{n+1;\eta} \|
      +
      \| \T_{\varphi_h}^{-1} \|
      \| \tfrac1\theta \discreteL^a u_h^n \|
      +
      |2 - \tfrac1\theta| \, \|  \discreteL^a u_h^n   \|
      +
      \|\discreteL^a   u_h^{n-1} \|
        \\
      &\qquad\leq 
      \tau^{-2(\frac14 - a)} \tau^{2(\frac14 - a)} c(a - \tfrac14) 
      \| \tau^{2(\frac54 - a)} \discreteL^{\frac14} f_h^{n+1;\eta} \|
      +
      (\tfrac2\theta - 2) \|  \discreteL^a u_h^n   \|
      +
      \|\discreteL^a   u_h^{n-1} \|
        \\
      &\qquad\leq 
      c(a - \tfrac14) \tau^{2(\frac54 - a)} 
      \| \discreteL^{\frac14} f_h^{n+1;\eta} \|
      +
      (\tfrac2\theta - 2) \|  \discreteL^a u_h^n   \|
      +
      \|\discreteL^a   u_h^{n-1} \|,
  \end{align*}
  where for~$0 \leq a \leq 1$ we have that $
    c(0) = 1,
    c(a)
    =
    \bigl(
      \tfrac{a}{\theta \alpha_{\min}}
    \bigr)^{a} {(1-a)}^{1-a},
    c(1) = \tfrac{1}{\theta\alpha_{\min}}.$
  Note that the source term mean needs separate handling as we do not invoke the~$\discreteL^\frac34$-norm. %
  Thus, for~$a=\frac14$ and~$a=\frac34$, respectively, using \eqref{eq:fully_discrete_fractional_stability} and \eqref{eq:fully_discrete_fractional_stability_ritz} we obtain 
  \begin{alignat*}{2}
    \| \discreteL^{\frac14}  F(\varphi_h)  \|
    &\leq
    \tau^2 C_{\frac14} + (\tfrac2\theta - 1) M_\frac14
    \leq
    T^2 C_{\frac14} + (\tfrac2\theta - 1) M_\frac14
    &&\eqqcolon
    K_\frac14,
    \\
    \| \discreteL^{\frac34}  F(\varphi_h)  \|
    &\leq
    \tfrac1{2\sqrt{\theta\alpha_{\min}}} \tau C_{\frac14} + (\tfrac2\theta - 1) M_\frac34
    \leq
    \tfrac1{2\sqrt{\theta\alpha_{\min}}} T C_{\frac14} + (\tfrac2\theta - 1) M_\frac34
    &&\eqqcolon
    K_\frac34
  \end{alignat*}
  without any \apriori{} bounds on $\varphi_h$, specifically, this means that the constants~$K_a$ for~$a\in\{\frac14,\frac34\}$ do not depend on the preimage. %
  We construct a complete subset of the finite element space~$V_h$ to be 
  \begin{equation*}
    K \coloneqq
    \bigl\{ \varphi_h \in V_h \mid 
    \|\discreteL^a \varphi_h \| \leq K_a \textrm{ for } a\in \{\tfrac14,\tfrac34\} \bigr\},
  \end{equation*}
  such that the fixed point map~$F$ is a self map on~$K$. %
  The contraction is performed in the~$\discreteL^\frac14$-norm. The reason is the structure of
  the nonlinearity: since~$\|\nabla \cdot\| = \| \discreteL^\frac12 \cdot\|$, its increment is
  proportional to the increment of its argument in the~$\discreteL^a$-norm for any
  $0\leq a \leq 1$,
  \begin{equation}\label{eq:fully_discrete_nonlinearity_proportional}
    \begin{aligned}
      \left|
        \Gamma_h^{n;\gamma}(u) - \Gamma_h^{n;\gamma}(v)
      \right|
      &=
      \beta^n \gamma
      \big| \|\nabla u\|^2 - \| \nabla v\|^2\big| 
      =
      \beta^n\gamma
      |\langle
        \nabla (u - v) , \nabla (u+v)
      \rangle |
      \\
      &\leq
      \beta^n \gamma
      \bigl(
        \|\discreteL^{1-a} u\| + \|\discreteL^{1-a} v\|
      \bigr)
      \left\|
        \discreteL^a (u - v)
      \right\|.
    \end{aligned}
  \end{equation}
  At $a=0$ this would require $H^2$-regularity, through
  $\|\discreteL u_h^n\|$. Following~\cite{wellposedness_kirchhoff_1996}, the choice
  $a=\frac14$ avoids it, and it is the one made here. Using
  the definition~\eqref{eq:fully_discrete_fixed_point_iteration} and the fact that~$\T_v^{-1}$
  and~$\discreteL$ commute, we have for any~$\varphi_h,\psi_h\in V_h$ that 
  \begin{equation}\label{eq:fully_discrete_contraction}
    \begin{aligned}
    \| \discreteL^a (F(\varphi_h) -  F(\psi_h) )  \|
    &=
    \| \discreteL^a (  \T_{\varphi_h}^{-1} - \T_{\psi_h}^{-1}    ) g_h^n \|
    =
    \| \T_{\varphi_h}^{-1} (  \T_{\psi_h} - \T_{\varphi_h}    ) \discreteL^{-1}  \discreteL^{1+a}  \T_{\psi_h}^{-1}  g_h^n \|
    \\
    &\leq
    \| \T_{\varphi_h}^{-1} (  \T_{\varphi_h} - \T_{\psi_h}    ) \discreteL^{-1} \|\, \| \discreteL^{1+a}  \T_{\psi_h}^{-1}  g_h^n \|.
    \end{aligned}
  \end{equation}
  The norm of the first term can be estimated by writing out the difference, and using the proportionality argument~\eqref{eq:fully_discrete_nonlinearity_proportional}. We have using also~$\|\T_v^{-1}\|\leq 1$ that 
  \begin{align*}
    \| \T_{\varphi_h}^{-1} (  \T_{\varphi_h} - \T_{\psi_h}    ) \discreteL^{-1} \|
    &=
    |  \tau^2 \theta  ( \Gamma_h^{n;\gamma} (\varphi_h) - \Gamma_h^{n;\gamma} (\psi_h)) |
    \\
    &\leq
    \tau^2 \theta  \beta^n \gamma (  \| \discreteL^{1-a} \varphi_h  \| + \| \discreteL^{1-a} \psi_h  \|  ) 
    \| \discreteL^{a} (\varphi_h - \psi_h)  \|.
  \end{align*}
  Here, on the right-hand side we observe the difference of functions~$\varphi_h,\psi_h$ in the~$\discreteL^a$-norm, as on the left-hand side of~\eqref{eq:fully_discrete_contraction}. Thus, it is left to bound the last term on the right-hand side of~\eqref{eq:fully_discrete_contraction}. With similar arguments as above we have that 
  \begin{align*}
    \| \discreteL^{1+a}  \T_{\psi_h}^{-1}  g_h^n \|  
    &=
    \| \tau^2 \T_{\psi_h}^{-1} \discreteL \discreteL^a f_h^{n+1;\eta}
    +
    \tfrac1\theta \T_{\psi_h}^{-1} \discreteL^{1 - 2a} \discreteL^{3a} u_h^n \|
    \\
    &\leq
    \tau \| \T_{\psi_h}^{-1} \discreteL \|\, \| \tau \discreteL^a f_h^{n+1;\eta} \|
    +
    \tfrac1\theta \| \T_{\psi_h}^{-1} \discreteL^{1 - 2a} \|\, \| \discreteL^{3a} u_h^n \|
    \\
    &\leq
    \tau^{-1} c(1) \| \tau \discreteL^a f_h^{n+1;\eta} \|
    +
    \tau^{2(2a - 1)} \tfrac1\theta c(1-2a) \| \discreteL^{3a} u_h^n \|.
  \end{align*}
  Combining the two previous estimates into~\eqref{eq:fully_discrete_contraction}, setting~$a=\frac14$ and using the bounds in~\Cref{ass:existence_exact_solution} and in~\eqref{eq:fully_discrete_bounds_previous_time_steps} we have that 
  \begin{align*}
    &\| \discreteL^{\frac14} (F(\varphi_h) -  F(\psi_h) )  \|
    \\
    &\qquad\leq
      {\tau^2} {\theta\beta^n\gamma}
      \bigl(
        \| \discreteL^\frac34 \varphi_h \| + \| \discreteL^\frac34 \psi_h \|
      \bigr)
      \bigl(
        \tau^{-1} \tfrac1{\theta \alpha_{\min}} \| \tau \discreteL^{\frac14} f_h^{n+1;\eta} \|
        +
        \tau^{-1} \tfrac1\theta c(\tfrac12) \| \discreteL^{\frac34} u_h^n \|
      \bigr)
      \| \discreteL^\frac14 \varphi_h - \discreteL^{\frac14} \psi_h \|
      \\
      &\qquad\leq
      \tau
      {\beta^n\gamma}
      (  \| \discreteL^\frac34 \varphi_h  \| + \| \discreteL^\frac34 \psi_h  \|  ) 
      \bigl(
        \tau \tfrac1{\alpha_{\min}} C_{\frac14}
        +
        \tfrac1{2\sqrt{\theta\alpha_{\min}}} M_{\frac34} 
      \bigr)
      \| \discreteL^{\frac14} (\varphi_h - \psi_h)  \|.
  \end{align*}
  Finally, for any~$\varphi_h,\psi_h\in K$ we have that 
  \begin{align*}
    \| \discreteL^{\frac14} (F(\varphi_h) -  F(\psi_h) )  \|
    &\leq
    \tau \beta_{\max}\gamma
    2 K_\frac34 
    \bigl(
      T \tfrac1{\alpha_{\min}} C_{\frac14}
      +
      \tfrac1{2\sqrt{\theta\alpha_{\min}}} M_{\frac34} 
    \bigr)
    \| \discreteL^{\frac14} (\varphi_h - \psi_h)  \|.
  \end{align*}
  Thus, there exists~$\tau_0 > 0$ that may depend on the coefficients~$\alpha$ and~$\beta$, the final time~$T$, the parameters~$\gamma$ and~$\theta$, the norms of the exact solution~$u$ and the source term~$f$, but does not depend on the time step~$n$ and the mesh size~$h$ such that, for~$0<\tau\leq \tau_0$, the mapping~$F$ is a contractive self-mapping on the complete metric space~$K$. The bounds placing $F ( \varphi_h )$ in $K$ were obtained without any restriction on the preimage, so $F ( V_h ) \subset K$ and every fixed point of $F$ lies in $K$, uniqueness in $K$ is therefore uniqueness in $V_h$. By the contraction principle, there exists a unique fixed point~$u_h^{n+1} \in V_h$ with the following bounds 
  $
    \| \discreteL^\frac34 u_h^{n+1} \| \leq K_\frac34,
    \| \discreteL^\frac14 u_h^{n+1} \| \leq K_\frac14$.
  Lastly, $\| \discreteL^\frac14 \partial_\tau u_h^{n+1} \|$ is bounded by the discrete energy
  argument. Testing~\eqref{eq:fully_discrete_nonlinear_scheme} with
  $\frac12\discreteL^\frac12(\partial_\tau u_h^{n+1} + \partial_\tau u_h^n)$ and \eqref{eq:three_layer_alt} gives 
  \begin{multline*}
    \| \discreteL^\frac14 \partial_\tau u_h^{n+1} \|^2
    + \Gamma_h^{n;\gamma}(u_h^{n+1}) 
     \mathcal{W}_h^{n+1} 
    \\
    \leq
    \| \discreteL^\frac14 \partial_\tau u_h^{n} \|^2
    + \Gamma_h^{n;\gamma}(u_h^{n+1}) 
     \mathcal{W}_h^{n} 
    +
    \| \discreteL^\frac14 f_h^{n+1;\eta} \|
    \| \discreteL^\frac14 (u_h^{n+1} - u_h^{n-1}) \| ,
  \end{multline*}
  where~$\mathcal{W}_h^n$ as in \Cref{lem:three_layer_displacement_energy}. 
  Since $\theta\geq \frac14$ and $\Gamma_h^{n;\gamma}(u_h^{n+1}) \geq 0$, the left-hand side has the lower bound $\| \discreteL^\frac14 \partial_\tau u_h^{n+1} \|^2$. Further, by assumption \eqref{eq:fully_discrete_bounds_previous_time_steps} we have $\Gamma_h^{n;\gamma}(u_h^{n+1}) \leq \alpha_{\max} + \beta_{\max}(\gamma K_\frac14 K_\frac34 + (1 - \gamma) M_\frac14 M_\frac34)\eqqcolon C_\Gamma$, and using~\eqref{eq:three_layer_upper} we have that 
  \begin{equation*}
    \| \discreteL^\frac14 \partial_\tau u_h^{n+1} \|^2
    \leq
    {M'_\frac14}^2
    +
    C_\Gamma M_\frac34^2
    +
    C_\frac14 ( K_\frac14 + M_\frac14 ) ,
  \end{equation*}
  which is independent of $n$, $\tau$ and $h$.
\end{proof}

The previous lemma shows the existence of a solution $u_h^{n+1}$ for some~$1\leq n\leq N-1$, however, it has a crucial drawback. Since we are using a contraction argument, the norms of the solution are larger than the ones assumed at the previous time steps. A recursive application of this lemma would lead to a blow-up of the solution norms. The following sections provide an idea of how to avoid the blow-up and show that the solution exists for all time steps. 

\subsection{Convergence of the local solutions}
We avoid the issue of a potential blow-up by showing that the solution $u_h^{n+1}$ approximates the Ritz projection of the exact solution at time $t_{n+1}$, and, thus, lives in the same ball as assumed for the previous time steps. %
This leads to the fact that the norms of all discrete solutions are bounded and we may apply recursively the previous lemma. 
In the following we abbreviate~$\hat u^n = u(n\tau)$ for the exact solution. %
Throughout this section, in addition to the assumptions in \Cref{lem:fully_discrete_local_existence}, we assume that a whole trajectory of fully discrete solutions exists and satisfies norm and error bounds. This is summarized in the following hypothesis. %

\begin{hypothesis}[Induction hypothesis]\label[hypothesis]{hyp:fully_discrete_existence_trajectory}
  Let the fully discrete solution be uniformly bounded by 
  \begin{subequations}\label{eq:fully_discrete_bounds_all_time_steps}
  \begin{equation}
    \begin{alignedat}{2}
      \| \discreteL^\frac34 u_h^m \|_{L^2(\domain)} &\leq M_\frac34,
      \qquad
      \| \discreteL^\frac14 u_h^m \|_{L^2(\domain)} \leq M_\frac14,
      \qquad
      &&m=0,\dots,n,
      \\
      \| \discreteL^\frac14 \partial_\tau u_h^m \|_{L^2(\domain)} &\leq M'_\frac14,
      \qquad
      &&m=1,\dots,n
    \end{alignedat}
  \end{equation}
  for some~$1\leq n \leq N-1$, and assume the following error estimates 
  \begin{alignat}{2}
    \| \discreteL^{\frac12 + a} ( \ritzprojection u(t_m) - u_h^{m} ) \|_{L^2(\domain)}
    +
    \| \discreteL^a ( \ritzprojection u(t_m) - u_h^{m} ) \|_{L^2(\domain)} 
    &\lesssim
    h^{p+1-2a} + \tau^2,
    \qquad
    m=0,\dots,n,\notag
    \\
    \| \discreteL^a \partial_\tau ( \ritzprojection u(t_m) - u_h^{m} ) \|_{L^2(\domain)}
    &\lesssim
    h^{p+1-2a} + \tau^2,
    \qquad
    m=1,\dots,n
  \end{alignat}
  \end{subequations}
  hold for $a\in\{0,\frac14\}$, where the constants are independent of the time step~$n$, the time step size~$\tau$, and the mesh size~$h$.
\end{hypothesis}

Note that \Cref{hyp:fully_discrete_existence_trajectory} implies the
hypotheses~\eqref{eq:fully_discrete_bounds_previous_time_steps}
of~\Cref{lem:fully_discrete_local_existence}, so that a unique
solution~$u_h^{n+1} \in V_h$ of~\eqref{eq:fully_discrete_nonlinear_scheme} exists. %
Regarding the choice of the parameter~$a$, the hypothesis is stated for both $a=0$ and $a=\frac14$ for the reason that the existence of a trajectory requires the stronger norms with $a=\frac14$, and standard error estimates are performed in the weaker norms with $a=0$. %
To show the convergence consider the fully discrete scheme and insert the Ritz projection of the exact solution $\ritzprojection \hat u^n$ into the fully discrete Kirchhoff PDE~\eqref{eq:fully_discrete_nonlinear_scheme}, then multiplying with a test function~$\varphi_h\in V_h$ and integrating we obtain 
\begin{equation}\label{eq:fully_discrete_nonlinear_scheme_defects}
  \langle
    \partial_\tau^2 \ritzprojection \hat u^{n+1}, \varphi_h
  \rangle
  +
  \Gamma_h^{n;\gamma}(u_h^{n+1})
  \langle
    \nabla \ritzprojection \hat u^{n+1;\theta},\nabla \varphi_h
  \rangle
  =
  \langle
    f_h^{n+1;\eta}, \varphi_h
  \rangle
  +
  \langle
    \delta_{h;\frac14}^n + \delta_{h;\frac34}^n + S_h^n, \varphi_h 
  \rangle.
\end{equation}
Here,~$\delta_{h;\frac14}^n + \delta_{h;\frac34}^n = [\delta_{h;f}^n + \delta_{h;1}^n] + [\delta_{h;2}^n + \delta_{h;3}^n + \delta_{h;4}^n]$ denotes a sum of defects, where the split is according to the
analysis below, and~$S_h^n$ is a stabilization term that arises from the nonlinearity. %
Writing $q^m = \alpha^m + \beta^m \| \nabla \hat u^m \|^2$ for the coefficient of the continuous equation at $t_m$, applying $\ltwoprojection$ to that equation and using the commutation identity~\eqref{eq:ritz_commutation} in the form $\discreteL \ritzprojection \hat u^m = \ltwoprojection \L \hat u^m$, we have that 
\begin{align*}
  \delta_{h;f}^m &= \ltwoprojection f^m - f_h^{m+1;\eta},
  \qquad
  \delta_{h;1}^m = \partial_\tau^2 \ritzprojection \hat u^{m+1} - \ltwoprojection \partial_t^2 \hat u^m,
  \\
  \delta_{h;2}^m &= \Gamma_h^{m;\gamma}(u_h^{m+1})\, \discreteL \ritzprojection \theta \bigl( \hat u^{m+1} - 2\hat u^m + \hat u^{m-1} \bigr),
  \\
  \delta_{h;3}^m &= \beta^m \gamma \bigl( \| \nabla \hat u^{m+1} \|^2 - 2 \| \nabla \hat u^{m} \|^2 + \| \nabla \hat u^{m-1} \|^2 \bigr) \discreteL \ritzprojection \hat u^m,
\end{align*}
and, with the projection and error contributions to the coefficient $
  \rho^m = \| \nabla \ritzprojection \hat u^{m} \|^2 - \| \nabla \hat u^{m} \|^2,
  \sigma^m = \| \nabla u_h^{m} \|^2 - \| \nabla \ritzprojection \hat u^{m} \|^2$,
\begin{alignat*}{2}
  \delta_{h;4}^m &{}= \beta^m \bigl( \gamma \rho^{m+1} + (1-2\gamma)\rho^m + \gamma \rho^{m-1} \bigr) \discreteL \ritzprojection \hat u^m,
  \\
  S_h^m &{}= \beta^m \bigl( \gamma \sigma^{m+1} + (1-2\gamma)\sigma^m + \gamma \sigma^{m-1} \bigr) \discreteL \ritzprojection \hat u^m.
\end{alignat*}
The split is exactly the one obtained by writing
$\Gamma_h^{m;\gamma}(u_h^{m+1}) - q^m$ as a sum of a $\sigma$-part, a $\rho$-part and a purely
temporal part, as carried out below. %
From the definitions of the defects, we readily observe that the split of the defect are according to if they carry a discrete Laplacian or not. The reason for it is that the terms are handled differently in the analysis. %
And only $S_h^m$ is proportional to the coefficient \emph{error},
which is why it alone requires the stabilization argument. %
Before we prove the theorem, we first provide the estimates for the defects and stabilization terms such that the proof is concise and readable. The proof is given in \Cref{app:defects}.

\begin{lemma}[Consistency]\label[lemma]{lem:consistency}
  Let~\Cref{ass:existence_exact_solution,hyp:fully_discrete_existence_trajectory} hold and let~$a\in \{0,\frac14\}$. Further, assume the additional temporal regularity for the solution~$u$ to the PDE \eqref{eq:main_kirchhoff}
  \begin{alignat*}{2}
    \partial_t^j u &\in L^\infty\bigl(0,T;D ( \L^{\frac34} )\bigr),\qquad && j=1,2,3,
    \\
    \partial_t^4 u &\in L^\infty\bigl(0,T;D ( \L^{\frac14} )\bigr),
  \end{alignat*}
  as well as the spatial regularity
  \begin{alignat*}{2}
    \partial_t^j u &{}\in L^\infty(0,T;H^{p+1}(\domain)),\qquad && j=0,1,2,
    \\
    f &{}\in L^\infty\bigl(0,T;H^{p+1}(\domain) \cap H_0^1(\domain)\bigr)\,,\qquad && \partial_t^2 f \in L^\infty\bigl(0,T;D ( \L^{\frac14} )\bigr).
  \end{alignat*}
  Then, we have that
  \begin{alignat}{2}
      | \Gamma_h^{m;\gamma}(u_h^{m+1}) - \Gamma_h^{m-1;\gamma}(u_h^{m}) | &\lesssim \tau , &\qquad& m=2,\dots,n \label{eq:cons_gamma}
      \\
    \| \discreteL^{a} \delta_{h;\frac14}^m \| &\lesssim h^{p+1-2a} + \tau^2 , &\qquad& m=1,\dots,n
      \label{eq:cons_weak}
      \\
    \| \discreteL^{a - \frac12} \delta_{h;\frac34}^m \|
      &\lesssim h^{2p} + \tau^2 , &\qquad& m\in\{1,n\}
      \label{eq:cons_stiff_end}
      \\
    \| \discreteL^{a - \frac12} \partial_\tau \delta_{h;\frac34}^m \|
      &\lesssim h^{2p} + \tau^2 , &\qquad& m = 2, \dots, n
      \label{eq:cons_stiff_time}
      \\
    | \langle \discreteL^{a} S_h^m , \discreteL^{a} \varphi_h \rangle |
      &\lesssim \Bigl( \textstyle\sum_{k=m-1}^{m+1} \| \discreteL^{\frac12 + a} e_h^{k} \| \Bigr)
        \| \discreteL^{a} \varphi_h \| , &\qquad& m = 1, \dots, n,
      \label{eq:cons_stab}
  \end{alignat}
  where the constants are independent of the time steps~$n$ and~$m$, the time step size~$\tau$, and the mesh size~$h$.
\end{lemma}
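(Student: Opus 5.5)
I will prove the five estimates of \Cref{lem:consistency} one at a time, starting with the ones that need no nonlinear structure. The common tools are Taylor expansion in time with integral remainder, using the listed regularity of $\partial_t^j u$; the projection estimates \eqref{eq:fully_discrete_approximation}; and the stability bounds \eqref{eq:fully_discrete_fractional_stability}, \eqref{eq:fully_discrete_fractional_stability_ritz} and \eqref{eq:fully_discrete_fractional_stability_ritz_below_half}. For \eqref{eq:cons_gamma} I will expand
\[
\Gamma_h^{m;\gamma}(u_h^{m+1})-\Gamma_h^{m-1;\gamma}(u_h^{m}) = (\alpha^m-\alpha^{m-1}) + (\beta^m-\beta^{m-1})(\cdots) + \beta^{m-1}\bigl[\gamma(\|\nabla u_h^{m+1}\|^2-\|\nabla u_h^{m}\|^2)+\dots\bigr].
\]
The first two brackets are $\bigO(\tau)$ because $\alpha,\beta\in C^1$. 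Each remaining difference has the form
\[
\|\nabla u_h^{k}\|^2-\|\nabla u_h^{k-1}\|^2=\tau\langle \discreteL^{\frac34}(u_h^k+u_h^{k-1}),\discreteL^{\frac14}\partial_\tau u_h^k\rangle,
\]
which is $\lesssim\tau$ by \Cref{hyp:fully_discrete_existence_trajectory}. At level $m+1=n+1$ I use the bounds of \Cref{lem:fully_discrete_local_existence} instead.

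For \eqref{eq:cons_weak} I treat the two parts separately.
\begin{itemize}
\item For $\delta_{h;f}^m$, I add and subtract $f^{m+1;\eta}$ and write $\ltwoprojection f^m - \interpolation f^{m} + \interpolation(f^m - f^{m+1;\eta})$. The projection part is $h^{p+1-2a}$ by \eqref{eq:fully_discrete_approximation}; for $a=\frac14$ I use the $\L^{\frac14}$ variant, passing between $\discreteL^{\frac14}$ and $\L^{\frac14}$ of a discrete function via \eqref{eq:fully_discrete_fractional_stability}. The temporal part is handled by stability of $\interpolation$ plus a Taylor expansion of $f$.

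I expect a difficulty here: $f^m - f^{m+1;\eta}$ contains a first-order difference $f^m-f^{m+1}$, which is only $\bigO(\tau)$. It is therefore essential that the scheme is really centered at $t_n$. I will read the indices so that $f_h^{n+1;\eta}$ is the symmetric average around $t_n$ (as in \eqref{eq:nonlinear_scheme_extended}); then the second difference gives $\tau^2\|\L^{a}\partial_t^2 f\|$.
\item For $\delta_{h;1}^m$, I split it as $(\ritzprojection-\ltwoprojection)\partial_\tau^2\hat u^{m+1} + \ltwoprojection(\partial_\tau^2\hat u^{m+1}-\partial_t^2\hat u^m)$. Writing the first piece as a time integral of $\partial_t^2u$ against the hat kernel, it is $\lesssim h^{p+1-2a}\|\partial_t^2 u\|_{H^{p+1}}$. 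The second piece is the central difference error, of size $\tau^2\|\L^a\partial_t^4u\|$.
\end{itemize}

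For the stiff defects \eqref{eq:cons_stiff_end} and \eqref{eq:cons_stiff_time}, the point is that $\discreteL^{a-\frac12}\discreteL\ritzprojection=\discreteL^{a+\frac12}\ritzprojection$, which is bounded by $\|\L^{a+\frac12}\cdot\|\le\|\L^{\frac34}\cdot\|$ via \eqref{eq:fully_discrete_fractional_stability_ritz}.
\begin{itemize}
\item $\delta_{h;2}^m$ contains the second difference $\hat u^{m+1}-2\hat u^m+\hat u^{m-1}=\bigO(\tau^2)$ in $D(\L^{\frac34})$, multiplied by the bounded factor $\Gamma$.
\item $\delta_{h;3}^m$ carries the second difference of $t\mapsto\|\nabla u\|^2$. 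Its second derivative involves $\langle\L^{\frac34}\partial_t^2u,\L^{\frac14}u\rangle$ and $\|\nabla\partial_tu\|^2$, so it is $\bigO(\tau^2)$.
\item $\delta_{h;4}^m$ carries $\rho$. By Galerkin orthogonality, $\rho=-\|\nabla(u-\ritzprojection u)\|^2=\bigO(h^{2p})$, which explains the $h^{2p}$ rate.
\end{itemize}
For the time differences in \eqref{eq:cons_stiff_time}, I apply $\partial_\tau$ through a discrete product rule. The differences of $\Gamma$ are handled by \eqref{eq:cons_gamma}. The differences of $\rho$ satisfy $\partial_\tau\rho=-\tau^{-1}\int\langle\ldots\rangle$ with $\partial_t$ of $\|\nabla(u-\ritzprojection u)\|^2$, hence $\lesssim h^{2p}$ using the bound on $\partial_tu$ in $H^{p+1}$. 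The remaining differences fall on $\partial_t^3 u\in D(\L^{\frac34})$.

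I expect \eqref{eq:cons_stiff_time} to be the main obstacle. The term $\partial_\tau\Gamma_h^{m;\gamma}(u_h^{m+1})$ multiplies an $\bigO(\tau^2)$ quantity, which is fine. However, it must be checked that no $\tau^{-1}$ survives when $\partial_\tau$ hits the second difference; this is where the third and Lipschitz-type temporal regularity of $u$ enters.

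Finally, for \eqref{eq:cons_stab} I write
\[
\sigma^k=\langle\nabla(u_h^k-\ritzprojection\hat u^k),\nabla(u_h^k+\ritzprojection\hat u^k)\rangle=\langle\discreteL^{\frac12+a}e_h^k,\discreteL^{\frac12-a}(u_h^k+\ritzprojection\hat u^k)\rangle.
\]
For $a\le\frac14$ the second factor is bounded by the $\discreteL^{\frac34}$ bounds, so $|\sigma^k|\lesssim\|\discreteL^{\frac12+a}e_h^k\|$. The remaining factor $\langle\discreteL^{a}\discreteL\ritzprojection\hat u^m,\discreteL^a\varphi_h\rangle$ is then estimated by Cauchy--Schwarz against $\|\discreteL^{a}\varphi_h\|$, using $\|\discreteL^{1+2a}\ritzprojection\hat u^m\|$ for $a\le\frac14$. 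I will bound that norm by $\|\L^{1+2a}\hat u\|$; this needs $u\in D(\L^{\frac32})$ at $a=\frac14$, which I would obtain from $H^{p+1}$-regularity only when $p\geq2$. Otherwise I move half a power onto $\varphi_h$ and accept the bound in the form stated.
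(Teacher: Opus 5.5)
\textbf{There is a genuine gap, in \eqref{eq:cons_stab} at $a=\tfrac14$.} Your arguments for \eqref{eq:cons_gamma}, \eqref{eq:cons_weak}, \eqref{eq:cons_stiff_end} and \eqref{eq:cons_stiff_time} follow the paper's proof essentially step by step. Your worry about $\delta_{h;f}^m$ is unfounded: by definition $f^m - f^{m+1;\eta} = -\eta\,(f^{m+1}-2f^m+f^{m-1})$, which is already a centred second difference.

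The problem in \eqref{eq:cons_stab} has three parts.
\begin{itemize}
\item The exponent bookkeeping is off. Since $\discreteL^{a} S_h^m$ is a scalar multiple of $\discreteL^{1+a}\ritzprojection\hat u^m$, Cauchy--Schwarz against $\|\discreteL^{a}\varphi_h\|$ requires $\|\discreteL^{1+a}\ritzprojection\hat u^m\|$, not $\|\discreteL^{1+2a}\ritzprojection\hat u^m\|$.
\item Even the correct quantity $\|\discreteL^{5/4}\ritzprojection\hat u^m\|$ is not controlled by any of the tools you list. The hypotheses place $u$ only in $D(\L^{3/4})$. $H^{p+1}$-regularity cannot supply $D(\L^{5/4})$, and still less $D(\L^{3/2})$, which you claim for $p\geq 2$. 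Membership in these domains encodes boundary conditions on $\Delta u$ that Sobolev regularity does not see; the paper stresses that \eqref{eq:domain_sobolev_embedding} cannot be reversed above exponent $\tfrac14$.
\item Your fallback of moving a power onto $\varphi_h$ produces a stronger norm of $\varphi_h$ than $\|\discreteL^{a}\varphi_h\|$. That is not the stated estimate. It would also not close the energy argument, where $\varphi_h$ is a sum of discrete time derivatives of $e_h$ that is controlled only in $\|\discreteL^{a}\cdot\|$ through $\mathcal{F}_h^m$.
\end{itemize}

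\textbf{The missing idea is to use the PDE itself to gain the extra regularity.}
\begin{itemize}
\item By \eqref{eq:fully_discrete_discrete_laplacian_equality}, $\discreteL^{1+a}\ritzprojection\hat u^m = \discreteL^{a}\ltwoprojection\L\hat u^m$.
\item The equation at $t_m$ gives $\L\hat u^m = (q^m)^{-1}\bigl(f^m-\partial_t^2\hat u^m\bigr)$ with $q^m\geq\alpha_{\min}>0$.
\item The stability \eqref{eq:fully_discrete_fractional_stability} of $\ltwoprojection$ then yields $\|\discreteL^{1+a}\ritzprojection\hat u^m\| \lesssim \|\L^{a} f^m\| + \|\L^{a}\partial_t^2\hat u^m\| \lesssim 1$. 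Here you use $f\in C^0([0,T];D(\L^{1/4}))$ from \Cref{ass:existence_exact_solution} and $\partial_t^2 u\in L^\infty(0,T;D(\L^{3/4}))$.
\end{itemize}
Combined with your correct bound $|\sigma^k|\lesssim\|\discreteL^{\frac12+a}e_h^k\|$, this gives \eqref{eq:cons_stab} in the stated form for both $a\in\{0,\tfrac14\}$ and every $p\geq 1$, with no case distinction in $p$.
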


Note that here, and only here, we assume the additional spatial regularity in the Sobolev scale, since these hypotheses feed the finite element approximation estimates. %
The requirement $f(t) \in H_0^1(\domain)$ makes the approximation
estimate~\eqref{eq:fully_discrete_approximation} applicable to $f$. It is an
{additional} hypothesis, not a consequence of \Cref{ass:existence_exact_solution}, whose
$D ( \L^{\frac14} ) = H_{00}^{\frac12}(\domain)$ does not embed in $H_0^1(\domain)$. %

Thus, we may give the main convergence theorem. The solution $u_h^{n+1}$ of
\Cref{lem:fully_discrete_local_existence} approximates $\ritzprojection u$ with order
$\bigO ( h^{p+\frac12} + \tau^2 )$ in the $\discreteL^{\frac34}$-norm, under stronger
regularity in space and time and given boundedness of the discrete solution at all previous
time steps. It is the induction step; the base case and the statement for all time levels are
\Cref{lem:fully_discrete_initial_conditions,cor:fully_discrete_convergence}. All estimates
below are for the error $\ritzprojection u(t_m) - u_h^{m}$ relative to the Ritz projection,
not for the full error: the projection error $u(t_m) - \ritzprojection u(t_m)$ is not a finite
element function and is therefore not measured in the discrete fractional norms at all. It is
added back, in the $L^2$- and $H^1$-norms, in \Cref{cor:fully_discrete_optimal_error}.

\begin{theorem}[Convergence of the local solution]\label{thm:fully_discrete_local_convergence}
  Let the assumptions of \Cref{lem:consistency} hold, let $\theta \in [ \tfrac14, \tfrac12 ]$,
  and assume at the endpoint $\theta = \tfrac14$ in addition the mesh condition
  $\tau \leq \kappa h$ for some fixed $\kappa > 0$.
  Then, the unique solution~$u_h^{n+1}\in V_h$ of the nonlinear equation~\eqref{eq:fully_discrete_nonlinear_scheme} supplied by~\Cref{lem:fully_discrete_local_existence} satisfies the same estimates at the level~$n+1$ as the solutions at the previous steps. That is, we have that 
  \begin{align*}
    \| \discreteL^{\frac12 + a} ( \ritzprojection u(t_{n+1}) - u_h^{n+1} ) \|_{L^2(\domain)}
    +
    \| \discreteL^a \partial_\tau ( \ritzprojection u(t_{n+1}) - u_h^{n+1} ) \|_{L^2(\domain)}
    &\lesssim
    h^{p+1-2a} + \tau^2,
    \\
    \| \discreteL^a ( \ritzprojection u(t_{n+1}) - u_h^{n+1} ) \|_{L^2(\domain)} 
    &\lesssim
    h^{p+1-2a} + \tau^2,
  \end{align*}
  where the constants are independent of the time step~$n$, the time step size~$\tau$, and the mesh size~$h$. 
\end{theorem}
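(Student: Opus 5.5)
We will prove the estimate by a discrete energy argument for the error $e_h^m \coloneqq \ritzprojection \hat u^m - u_h^m$, run simultaneously for $a \in \{0,\tfrac14\}$. We subtract the scheme~\eqref{eq:fully_discrete_nonlinear_scheme} from the defect equation~\eqref{eq:fully_discrete_nonlinear_scheme_defects}. Both carry the same coefficient $\Gamma_h^{m;\gamma}(u_h^{m+1})$, so the error satisfies, for $m=1,\dots,n$,
\begin{equation*}
  \partial_\tau^2 e_h^{m+1} + \Gamma_h^{m;\gamma}(u_h^{m+1}) \discreteL e_h^{m+1;\theta}
  = \delta_{h;\frac14}^m + \delta_{h;\frac34}^m + S_h^m .
\end{equation*}
The key point is that the nonlinearity enters linearly: its difference is confined to $S_h^m$, which \eqref{eq:cons_stab} bounds by errors in the $\discreteL^{\frac12+a}$-norm. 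We test with $\discreteL^{2a}(e_h^{m+1}-e_h^{m-1}) = \tau\discreteL^{2a}(\partial_\tau e_h^{m+1}+\partial_\tau e_h^m)$. The inertial term telescopes to $\|\discreteL^a\partial_\tau e_h^{m+1}\|^2 - \|\discreteL^a\partial_\tau e_h^m\|^2$. The stiffness term, with $v_h^k = \discreteL^{a+\frac12}e_h^k$, equals $\Gamma_h^{m;\gamma}(u_h^{m+1})(\mathcal{W}_h^{m+1}-\mathcal{W}_h^m)$ by \eqref{eq:three_layer_alt}.

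The natural energy is $\mathcal{G}^m \coloneqq \|\discreteL^a\partial_\tau e_h^m\|^2 + \Gamma_h^{m-1;\gamma}(u_h^m)\mathcal{W}_h^m$. The coefficient $\Gamma$ depends on $m$, so we shift it by writing $\Gamma_h^{m}\mathcal{W}_h^{m} = \Gamma_h^{m-1}\mathcal{W}_h^m + (\Gamma_h^m-\Gamma_h^{m-1})\mathcal{W}_h^m$. By \eqref{eq:cons_gamma} and \eqref{eq:three_layer_upper}, the remainder is $\lesssim \tau(\|v_h^m\|^2+\|v_h^{m-1}\|^2)$. Since $\Gamma \ge \alpha_{\min}>0$ and $\Gamma$ is bounded above (\Cref{hyp:fully_discrete_existence_trajectory} together with \Cref{lem:fully_discrete_local_existence}), $\mathcal{G}^m$ is equivalent to $\mathcal{F}_h^m$. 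By \eqref{eq:energy_coercive} it therefore controls $\|\discreteL^{a+\frac12}e_h^m\|^2$. At $\theta=\tfrac14$ this step uses the mesh condition $\tau\le\kappa h$.

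The right-hand side is handled term by term. The weak defect gives $\tau\langle\discreteL^a\delta_{h;\frac14}^m,\discreteL^a(\partial_\tau e_h^{m+1}+\partial_\tau e_h^m)\rangle$, which we bound by Young's inequality using \eqref{eq:cons_weak}. The stabilization term is treated the same way with \eqref{eq:cons_stab}, giving $\tau$ times a sum of $\mathcal{G}^k$ over $k\in\{m-1,m,m+1\}$. The stiff defect $\delta_{h;\frac34}^m$ only admits a bound in the $\discreteL^{a-\frac12}$-norm, and we expect it to be the main obstacle. For it we do not bound the time difference of the error directly. Instead we sum over $m$ first and perform summation by parts in time, moving $\partial_\tau$ onto the defect:
\begin{equation*}
  \sum_{m=1}^{n}\langle \delta_{h;\frac34}^m, \discreteL^{2a}(e_h^{m+1}-e_h^{m-1})\rangle
\end{equation*}
becomes boundary terms at $m\in\{1,n\}$ plus $\tau\sum_m \langle\partial_\tau\delta_{h;\frac34}^m,\discreteL^{2a}(e_h^m+e_h^{m-1})\rangle$. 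We pair these as $\|\discreteL^{a-\frac12}\cdot\|\,\|\discreteL^{a+\frac12}\cdot\|$. The boundary terms are controlled by \eqref{eq:cons_stiff_end}. The interior sum is controlled by \eqref{eq:cons_stiff_time}, with \eqref{eq:energy_sum} handling $e_h^m+e_h^{m-1}$. In the boundary term at $m=n$ the factor $\|\discreteL^{a+\frac12}e_h^{n+1}\|$ appears, and we absorb it into the left-hand side with a small Young weight. We will need to check that the orders $h^{2p}$ in \eqref{eq:cons_stiff_end}--\eqref{eq:cons_stiff_time} are at least $h^{p+1-2a}$, which holds for $p\ge1$.

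Summing the energy inequality from $m=1$ to $n$ then gives
\begin{equation*}
  \mathcal{G}^{n+1} \lesssim \mathcal{G}^1 + (h^{p+1-2a}+\tau^2)^2 + \tau\sum_{k=0}^{n+1}\mathcal{G}^k .
\end{equation*}
For $\tau$ small we absorb the $k=n+1$ term. The terms up to level $n$ are already controlled by the hypothesis, and the initial energy $\mathcal{G}^1$ is controlled by the base case. A discrete Gronwall inequality then yields $\mathcal{G}^{n+1}\lesssim(h^{p+1-2a}+\tau^2)^2$ with constants growing only like $e^{cT}$, which is uniform in $n$. This gives the bounds on $\|\discreteL^{\frac12+a}e_h^{n+1}\|$ and $\|\discreteL^a\partial_\tau e_h^{n+1}\|$. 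Finally, $\|\discreteL^a e_h^{n+1}\|$ follows from \eqref{eq:discrete_poincare} applied to $\|\discreteL^{a+\frac12}e_h^{n+1}\|$. Alternatively, it follows from $e_h^{n+1} = e_h^n + \tau\partial_\tau e_h^{n+1}$, but that route would accumulate over the steps, so we prefer the Poincaré route.
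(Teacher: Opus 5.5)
Your proposal is correct and is essentially the paper's proof. The shared steps are the same error equation and test function (yours is $2\tau$ times the paper's), the identity \eqref{eq:three_layer_alt}, and the shift of $\Gamma_h^{m;\gamma}$ controlled by \eqref{eq:cons_gamma}; keep the $m=1$ term unshifted, since $\Gamma_h^{0;\gamma}(u_h^1)$ would involve the undefined $u_h^{-1}$. They also include summation by parts moving $\partial_\tau$ onto $\delta_{h;\frac34}^m$, with the top boundary term absorbed into $\Gamma_h^{n;\gamma}(u_h^{n+1})\mathcal{W}_h^{n+1}$ via \eqref{eq:energy_sum}, absorption of the $\tau\mathcal{F}_h^{n+1}$ term for small $\tau$, a discrete Gr\"onwall argument, and \eqref{eq:discrete_poincare} at the end.

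The one step to state precisely is the Gr\"onwall argument. Do not insert the error bounds of \Cref{hyp:fully_discrete_existence_trajectory} for the levels $m\le n$, because that makes the new constant depend on, and exceed, the hypothesis constant. Instead, as the paper does, apply the discrete Gr\"onwall inequality to the family of inequalities your derivation yields at every level $k\le n$. Then the constant depends only on the data, and the induction in \Cref{cor:fully_discrete_convergence} closes.
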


\begin{proof}
First, we derive the final error equation such that we may apply the discrete Gr\"onwall inequality. 
Subtracting the fully discrete equation~\eqref{eq:fully_discrete_nonlinear_scheme} from~\eqref{eq:fully_discrete_nonlinear_scheme_defects}, we have that 
\begin{equation}\label{eq:error_equation}
  \langle
    \partial_\tau^2 e_h^{n+1}, \varphi_h
  \rangle
  +
  \Gamma_h^{n;\gamma}(u_h^{n+1})
  \langle
    \nabla e_h^{n+1;\theta},\nabla \varphi_h
  \rangle
  =
  \langle
    \delta_{h;\frac14}^n + \delta_{h;\frac34}^n + S_h^n, \varphi_h
  \rangle.
\end{equation}
Let $a\in \{ 0, \frac14\}$, and define the test function
\(
  \varphi_h = \tfrac12 \discreteL^{2a} (\partial_\tau e_h^{n+1} + \partial_\tau e_h^n)
  \in V_h ,
\)
and define the three layer displacement energy by 
\(
  \mathcal{W}_h^m
  \coloneqq
  \tfrac14 \| \discreteL^{\frac12 + a} (e_h^{m} + e_h^{m-1}) \|^2 + \bigl( \theta - \tfrac14 \bigr) \| \discreteL^{\frac12 + a} (e_h^{m} - e_h^{m-1}) \|^2 .
\)
Note that this definition coincides with the definition in \Cref{lem:three_layer_displacement_energy}. 
Using 
$\tfrac12 ( \partial_\tau e_h^{n+1} + \partial_\tau e_h^{n} )
= \frac1{2\tau} ( e_h^{n+1} - e_h^{n-1} )$ and \eqref{eq:three_layer_alt} for the second term, the left-hand side becomes 
\begin{equation*}
  \frac1{2\tau}
  \bigl(
    \| \discreteL^a \partial_\tau e_h^{n+1} \|^2
    - \| \discreteL^a \partial_\tau e_h^{n} \|^2
  \bigr)
  +
  \frac1{2\tau}
  \Gamma_h^{n;\gamma}(u_h^{n+1})
  \bigl( \mathcal{W}_h^{n+1} - \mathcal{W}_h^{n} \bigr) .
\end{equation*}
Writing, for $m \geq 2$,
\begin{multline*}
\Gamma_h^{m;\gamma}(u_h^{m+1}) ( \mathcal{W}_h^{m+1} - \mathcal{W}_h^{m} )
\\
= 
\Gamma_h^{m;\gamma}(u_h^{m+1}) \mathcal{W}_h^{m+1} - \Gamma_h^{m-1;\gamma}(u_h^{m}) \mathcal{W}_h^{m} 
- \bigl( \Gamma_h^{m;\gamma}(u_h^{m+1}) - \Gamma_h^{m-1;\gamma}(u_h^{m}) \bigr) \mathcal{W}_h^{m},
\end{multline*}
so that
the first two terms on the right-hand side telescope, and keeping the term $m=1$ as it stands. Then, similarly we write for $m\geq 2$,
\begin{align*}
  \langle
    \discreteL^{a - \frac12} \delta_{h;\frac34}^m,
    \discreteL^{\frac12 + a} ( e_h^{m+1} -  e_h^{m-1})
  \rangle
  &=
  \langle
    \discreteL^{a - \frac12} \delta_{h;\frac34}^m,
    \discreteL^{\frac12 + a} ( e_h^{m+1} +  e_h^{m})
  \rangle
  -
  \langle
    \discreteL^{a - \frac12} \delta_{h;\frac34}^{m-1},
    \discreteL^{\frac12 + a} ( e_h^{m} +  e_h^{m-1})
  \rangle
  \\
  &\quad
  - \tau
  \langle
    \discreteL^{a - \frac12} \partial_\tau \delta_{h;\frac34}^{m},
    \discreteL^{\frac12 + a} ( e_h^{m} +  e_h^{m-1})
  \rangle,
\end{align*}
such that again the first two terms telescope. 
Inserting the previous three equations into \eqref{eq:error_equation}, summing from $1$ to $n$ and multiplying by $2\tau$ yields
\begin{multline}\label{eq:fully_discrete_summed}
  \| \discreteL^a \partial_\tau e_h^{n+1} \|^2
  +
  \Gamma_h^{n;\gamma}(u_h^{n+1}) \mathcal{W}_h^{n+1}
  =
  \| \discreteL^a \partial_\tau e_h^{1} \|^2
  +
  \Gamma_h^{1;\gamma}(u_h^{2}) \mathcal{W}_h^{1}
  \\
  {}+
  \sum_{m=2}^{n}
  \bigl(
    \Gamma_h^{m;\gamma}(u_h^{m+1}) - \Gamma_h^{m-1;\gamma}(u_h^{m})
  \bigr) \mathcal{W}_h^{m}
  +
  \tau \sum_{m=1}^n
  \langle
    \discreteL^a (\delta_{h;\frac14}^m + S_h^m),
    \discreteL^a (\partial_\tau e_h^{m+1} + \partial_\tau e_h^m)
  \rangle
  \\
  +
  \langle
    \discreteL^{a - \frac12} \delta_{h;\frac34}^n,
    \discreteL^{\frac12 + a} ( e_h^{n+1} +  e_h^{n})
  \rangle
  -
  \langle
    \discreteL^{a - \frac12} \delta_{h;\frac34}^{1},
    \discreteL^{\frac12 + a} ( e_h^{1} +  e_h^{0})
  \rangle
  -\tau
  \sum_{m=2}^n
  \langle
    \discreteL^{a - \frac12} \partial_\tau \delta_{h;\frac34}^{m},
    \discreteL^{\frac12 + a} ( e_h^{m} +  e_h^{m-1})
  \rangle.
\end{multline}
  
  Since $\beta^m \geq \beta_{\min} > 0$ and the weights
$\gamma, 1-2\gamma, \gamma$ are non-negative, the coefficient obeys
$\alpha_{\min} \leq \Gamma_h^{m;\gamma}(u_h^{m+1}) \leq C_\Gamma$, with $C_\Gamma$ from the
proof of \Cref{lem:fully_discrete_local_existence}. Let 
\(
  \mathcal{F}_h^m
  =
  \| \discreteL^{a} \partial_\tau e_h^m \|^2 + \mathcal{W}_h^m
  \textrm{ for }m = 1,\dots,n+1 ,\textrm{ and }
  \mathcal{F}_h^0 \coloneqq \| \discreteL^{\frac12 + a} e_h^0 \|^2 
\)
be the quantities as defined in \Cref{cor:three_layer_energy_control} at $a \in \{0,\tfrac14\}$ and $u_h^m$ replaced by $e_h^m$. 
\begin{subequations}\label{eq:convergence_proof_estimates_summary}
\noindent We now bound the seven terms on the right-hand side of
\eqref{eq:fully_discrete_summed} in turn. By the induction hypothesis
\eqref{eq:fully_discrete_bounds_all_time_steps} at the levels $m = 0,1$, together with
$\Gamma_h^{1;\gamma}(u_h^{2}) \leq C_\Gamma$, \eqref{eq:cons_stiff_end}, Cauchy--Schwarz, and \eqref{eq:three_layer_upper},
\begin{equation}
  \| \discreteL^{a} \partial_\tau e_h^{1} \|^2
  + \Gamma_h^{1;\gamma}(u_h^{2}) \mathcal{W}_h^{1}
  + |\langle \discreteL^{a - \frac12} \delta_{h;\frac34}^1, \discreteL^{\frac12 + a} (e_h^1 + e_h^0)\rangle| 
  \lesssim
  \bigl( h^{p+1-2a} + \tau^2 \bigr)^2 ,
\end{equation}
where we used~$h^{2p} \leq h^{p+1 - 2a}$ for $p \geq 1$ and $h \leq 1$. 
By \eqref{eq:cons_gamma} and \eqref{eq:three_layer_upper},
\begin{equation}
  \sum_{m=2}^{n}
  \bigl| \Gamma_h^{m;\gamma}(u_h^{m+1}) - \Gamma_h^{m-1;\gamma}(u_h^{m}) \bigr| \,
  \mathcal{W}_h^{m}
  \lesssim
  \tau \sum_{m=2}^{n} \mathcal{W}_h^{m}
  \leq
  \tau \sum_{m=1}^{n} \mathcal{F}_h^m .
\end{equation}
By Cauchy--Schwarz, 
\eqref{eq:cons_weak}, the plain Young inequality and $\tau \sum_{m=1}^{n} 1 \leq T$,
\begin{equation}
  \tau \sum_{m=1}^{n}
  \bigl| \langle \discreteL^{a} \delta_{h;\frac14}^m ,
    \discreteL^{a} ( \partial_\tau e_h^{m+1} + \partial_\tau e_h^{m} ) \rangle \bigr|
  \lesssim
  T \bigl( h^{p + 1 - 2a} + \tau^2 \bigr)^2
  +
  \tau \sum_{m=1}^{n+1} \mathcal{F}_h^m .
\end{equation}
By \eqref{eq:cons_stab}, the plain Young
inequality and \eqref{eq:energy_coercive},
\begin{equation}
  \tau \sum_{m=1}^{n}
  \bigl| \langle \discreteL^{a} S_h^m ,
    \discreteL^{a} ( \partial_\tau e_h^{m+1} + \partial_\tau e_h^{m} ) \rangle \bigr|
  \lesssim
  \tau \sum_{m=0}^{n+1} \mathcal{F}_h^m .
\end{equation}
This is the only term whose bound is quadratic in the error with a coefficient that is not
small, and it is therefore the term that forces the use of a Gr\"onwall argument below.
The boundary term involving $\delta_{h;\frac34}^n$ carries no factor $\tau$ and reaches the
level $n+1$, so it is absorbed into the left-hand side. Using \eqref{eq:energy_sum},
$\Gamma_h^{n;\gamma}(u_h^{n+1}) \geq \alpha_{\min}$ and the weighted Young inequality with
weight $\alpha_{\min}/8$ and \eqref{eq:cons_stiff_end}, we have
\begin{align}\label{eq:fully_discrete_boundary_absorption}
  \bigl| \langle \discreteL^{a - \frac12} \delta_{h;\frac34}^{n} ,
    \discreteL^{\frac12 + a} ( e_h^{n+1} + e_h^{n} ) \rangle \bigr|
  &\leq
  \tfrac{2}{\alpha_{\min}} \| \discreteL^{a - \frac12} \delta_{h;\frac34}^{n} \|^2
  +
  \tfrac{\alpha_{\min}}{8} \| \discreteL^{\frac12 + a} ( e_h^{n+1} + e_h^{n} ) \|^2\notag
  \\
  &\lesssim
  \tfrac{2}{\alpha_{\min}} (h^{p + 1 - 2a} + \tau^2)^2
  +
  \tfrac12 \Gamma_h^{n;\gamma}(u_h^{n+1}) \mathcal{W}_h^{n+1} .
\end{align}
Note that this absorption requires no restriction
on $\tau$.
By \eqref{eq:cons_stiff_time}, Young's inequality,
\eqref{eq:energy_coercive} and $\tau \sum_{m=2}^{n} 1 \leq T$,
\begin{equation}
  \tau \sum_{m=2}^{n}
  \bigl| \langle \discreteL^{a - \frac12} \partial_\tau \delta_{h;\frac34}^{m} ,
    \discreteL^{\frac12 + a} ( e_h^{m} + e_h^{m-1} ) \rangle \bigr|
  \lesssim
  T \bigl( h^{p + 1 - 2a} + \tau^2 \bigr)^2
  +
  \tau \sum_{m=0}^{n} \mathcal{F}_h^m .
\end{equation}
\end{subequations}
It is here that the summation by parts pays off. Without it the same term would carry a factor
$\tau^{-1}$ and the defects, being of order $\tau^2$, would not suffice.

Inserting \eqref{eq:convergence_proof_estimates_summary} into
\eqref{eq:fully_discrete_summed}, subtracting the second summand
of~\eqref{eq:fully_discrete_boundary_absorption} from both sides and using
$\Gamma_h^{n;\gamma}(u_h^{n+1}) \geq \alpha_{\min}$, so that the left-hand side
of~\eqref{eq:fully_discrete_summed} is at least
$\min \{ 1, \alpha_{\min} \} \mathcal{F}_h^{n+1}$, we obtain constants $c_1, c_2 > 0$, independent of $n$, $\tau$ and $h$,
with
\begin{equation*}
  \tfrac12 \min \{ 1 , \alpha_{\min} \} \, \mathcal{F}_h^{n+1}
  \leq
  c_1 ( 1 + T ) \bigl( h^{p + 1 - 2a} + \tau^2 \bigr)^2
  +
  c_2 \, \tau \sum_{m=0}^{n+1} \mathcal{F}_h^m .
\end{equation*}
The sum on the right still reaches the level $n+1$. Choosing
$\tau_1 \coloneqq \min \{ 1 , \alpha_{\min} \} / ( 4 c_2 )$ and restricting to
$\tau \leq \tau_1$, that term is absorbed as well, and we arrive at
\begin{equation}\label{eq:fully_discrete_gronwall_ready}
  \mathcal{F}_h^{n+1}
  \leq
  c_3 \bigl( h^{p + 1 - 2a} + \tau^2 \bigr)^2
  +
  c_4 \, \tau \sum_{m=0}^{n} \mathcal{F}_h^m ,
  \qquad
  c_3 , c_4 \text{ independent of } n , \tau , h .
\end{equation}
Every step leading to \eqref{eq:fully_discrete_gronwall_ready} used the hypotheses of the
theorem only at the levels $m \leq n$, together with the bounds
\Cref{lem:fully_discrete_local_existence} supplies at the level $n+1$. The derivation is
therefore valid verbatim with $n$ replaced by any $k \leq n$, so that
\eqref{eq:fully_discrete_gronwall_ready} holds for $\mathcal{F}_h^{k+1}$, $k = 1,\dots,n$, while
$\mathcal{F}_h^0 + \mathcal{F}_h^1 \lesssim ( h^{p + 1 - 2a} + \tau^2 )^2$ by
\eqref{eq:fully_discrete_bounds_all_time_steps} at $m = 0,1$. The discrete Gr\"onwall
inequality applied to this family yields
\begin{equation*}
  \max_{0 \leq m \leq n+1} \mathcal{F}_h^{m}
  \leq
  c_5 \, e^{c_4 T} \bigl( h^{p + 1 - 2a} + \tau^2 \bigr)^2 ,
\end{equation*}
with $c_5$ depending only on the data of the problem. 
Finally, \eqref{eq:energy_coercive} at $m = n+1$ gives
\begin{equation*}
  \| \discreteL^{\frac12 + a} e_h^{n+1} \|^2
  +
  \| \discreteL^{a} \partial_\tau e_h^{n+1} \|^2
  \leq
  ( 1 + \cstcoerc ) \, \mathcal{F}_h^{n+1}
  \lesssim
  \bigl( h^{p + 1 - 2a} + \tau^2 \bigr)^2 ,
\end{equation*}
which is the first asserted estimate, and since $\discreteL$ is uniformly positive definite on
$V_h$, \eqref{eq:discrete_poincare} with $a \in \{0,\tfrac14\}$, $b = a + \frac12$ gives the second,
\begin{equation*}
  \| \discreteL^{a} e_h^{n+1} \|
  \leq
  \lambda_1^{-\frac12} \| \discreteL^{a + \frac12} e_h^{n+1} \|
  \lesssim
  h^{p + 1 - 2a} + \tau^2 . \qedhere
\end{equation*}
\end{proof}

\subsection{Initial data and closure of the induction}
\noindent It remains to specify the discrete initial data and to close the induction. Mirroring
the starting values~\eqref{eq:starting_vec1} of the time-discrete scheme, we set 
\begin{equation}\label{eq:fully_discrete_initial_data}
    u_h^0 = \ritzprojection u_0,
    \qquad
    u_h^1 = \ritzprojection \bigl( u_0 + \tau v_0 + \tfrac{\tau^2}{2} ( f^0 - q^0 \L u_0 ) \bigr)
\end{equation}
that is, the Ritz projection of the Taylor expansion of $u$ about $t = 0$. This is what the
scheme~\eqref{eq:fully_discrete_nonlinear_scheme} at $n = 0$ produces once the ghost value is
eliminated by the centred approximation $u_h^{-1} = u_h^{1} - 2\tau v_0$ of
$\partial_t u ( 0 ) = v_0$ and the stiffness term is evaluated explicitly, so that the level
$n = 1$ is obtained from an explicit step; $q^0 = \alpha^0 + \beta^0 \| \nabla u_0 \|^2$ denotes
the coefficient of the continuous equation at $t = 0$, whence
$f^0 - q^0 \L u_0 = \partial_t^2 u ( 0 )$. 
Since \Cref{thm:fully_discrete_local_convergence} assumes both $\partial_t u$ and
$\partial_t^2 u$ in $L^\infty ( 0,T;D ( \L^{\frac34} ) )$, we have
$\partial_t u \in W^{1,\infty} ( 0,T;D ( \L^{\frac34} ) ) \hookrightarrow C ( [0,T];D ( \L^{\frac34} ) )$,
so that $\partial_t u$ has a $D ( \L^{\frac34} )$-valued trace at $t=0$ and
$v_0 = \partial_t u(0) \in D ( \L^{\frac34} )$. This is therefore not an additional hypothesis
below.

\begin{lemma}[Error bounds for the fully discrete initial data]\label[lemma]{lem:fully_discrete_initial_conditions}
  Let \Cref{ass:existence_exact_solution} hold, let $a\in\{0,\frac14\}$, and let
  $u_h^0, u_h^1$ be given by~\eqref{eq:fully_discrete_initial_data}. Then, for $m = 0,1$
  \begin{align*}
    \| \discreteL^{\frac12 + a} \bigl( \ritzprojection u(t_m) - u_h^m \bigr) \|
    +
    \| \discreteL^{a} \bigl( \ritzprojection u(t_m) - u_h^m \bigr) \|
    &{}\lesssim h^{p + 1 - 2a} + \tau^3\,,
    \\
    \| \discreteL^{a} \partial_\tau \bigl( \ritzprojection u(t_1) - u_h^1 \bigr) \|
    &{}\lesssim h^{p + 1 - 2a} + \tau^2\,,
  \end{align*}
  and the uniform bounds \eqref{eq:fully_discrete_bounds_all_time_steps} hold for
  $m = 0,1$, for $\tau$ and $h$ sufficiently small. 
\end{lemma}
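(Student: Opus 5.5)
The key observation is that both errors lie in the range of $\ritzprojection$, so the whole argument reduces to Taylor expansion in time plus the stability bounds for the Ritz projection, \eqref{eq:fully_discrete_fractional_stability_ritz} and \eqref{eq:fully_discrete_fractional_stability_ritz_below_half}. At $m=0$ the error vanishes identically, since $u_h^0 = \ritzprojection u_0 = \ritzprojection u(t_0)$. At $m=1$, linearity of $\ritzprojection$ and $f^0 - q^0\L u_0 = \partial_t^2 u(0)$ give
\begin{equation*}
  \ritzprojection u(t_1) - u_h^1 = \ritzprojection r_\tau,
  \qquad
  r_\tau \coloneqq u(\tau) - u_0 - \tau \partial_t u(0) - \tfrac{\tau^2}{2}\partial_t^2 u(0)
  = \tfrac12\int_0^\tau (\tau-s)^2 \partial_t^3 u(s)\,\mathrm{d}s .
\end{equation*}
So no spatial error arises at this level; the $h^{p+1-2a}$ term in the statement is just slack.

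For the first estimate I apply \eqref{eq:fully_discrete_fractional_stability_ritz} with exponent $\tfrac12+a\in[\tfrac12,\tfrac34]$. This gives $\|\discreteL^{\frac12+a}\ritzprojection r_\tau\| \le \|\L^{\frac12+a} r_\tau\| \le \tfrac{\tau^3}{6}\|\L^{\frac34}\partial_t^3 u\|_{L^\infty(L^2)}$, using the regularity $\partial_t^3 u\in L^\infty(0,T;D(\L^{\frac34}))$ assumed in \Cref{lem:consistency}. That regularity is what the remark before the lemma implicitly relies on, so I will state explicitly that those hypotheses are in force. For the lower norm $\|\discreteL^{a}\ritzprojection r_\tau\|$ with $a\le\tfrac14$, I use \eqref{eq:discrete_poincare} to reduce it to the previous bound. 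This yields $\tau^3$ for $m=1$.

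For the difference quotient, $\partial_\tau(\ritzprojection u(t_1)-u_h^1) = \tau^{-1}\bigl[(\ritzprojection u(t_1)-u_h^1) - (\ritzprojection u(t_0)-u_h^0)\bigr] = \tau^{-1}\ritzprojection r_\tau$. The bound $\|\discreteL^a\ritzprojection r_\tau\| \lesssim \tau^3$ then gives order $\tau^2$.

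The last claim, the uniform bounds \eqref{eq:fully_discrete_bounds_all_time_steps} for $m=0,1$, is where care is needed. Recall that the $M$'s carry a factor $2$ over the sup-norms of the exact solution. At $m=0$, \eqref{eq:fully_discrete_fractional_stability_ritz} with $a=\tfrac34$ gives $\|\discreteL^{\frac34}u_h^0\|\le\|\L^{\frac34}u_0\|\le M_{\frac34}/2$.

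The $\discreteL^{\frac14}$-norm needs a separate route, because the Ritz projection is not stable in $D(\L^{\frac14})$. I use \eqref{eq:fully_discrete_fractional_stability_ritz_below_half}, but it only gives $\lambda_1^{-\frac14}\|\L^{\frac12}u_0\|$, which is not obviously $\le M_{\frac14}$. I therefore write $\ritzprojection u_0 = \ltwoprojection u_0 + (\ritzprojection - \ltwoprojection)u_0$. The first piece is handled by \eqref{eq:fully_discrete_fractional_stability}. The second is estimated via the inverse inequality \eqref{eq:discrete_inverse} combined with the approximation bounds \eqref{eq:fully_discrete_approximation}, which gives $\lesssim h^{\frac12}\to0$. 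The constant in \eqref{eq:fully_discrete_fractional_stability} is still not $1$, and I expect this to be the main obstacle. Failing a sharper stability statement, I would compare with the exact spectral $\L^{\frac14}$-norm through the triangle inequality and the approximation estimate $\|\L^{\frac14}(v-\Pi v)\|\lesssim h^{s-\frac12}$, so that the discrete norm equals $\|\L^{\frac14}u_0\|+o(1)\le M_{\frac14}/2+o(1)\le M_{\frac14}$ for $h$ small.

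At $m=1$, write $u_h^1 = \ritzprojection u(t_1) - \ritzprojection r_\tau$, bound the first term exactly as at $m=0$ with $t_1$ in place of $t_0$, and note the second is $\bigO(\tau^3)$. For the velocity, $\partial_\tau u_h^1 = \ritzprojection\bigl(\tau^{-1}(u(t_1)-u_0)\bigr) - \tau^{-1}\ritzprojection r_\tau$. Here $\tau^{-1}(u(t_1)-u_0)$ is an average of $\partial_t u$ with $\L^{\frac14}$-norm at most $M'_{\frac14}/2$, so the same $h$-small comparison yields $\le M'_{\frac14}$ once $\tau$ and $h$ are small enough.
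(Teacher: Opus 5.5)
Your error estimates follow the paper's proof step by step: $e_h^0=0$, $e_h^1=\ritzprojection r_\tau$, Ritz stability \eqref{eq:fully_discrete_fractional_stability_ritz} at $\tfrac12+a$, reduction of the lower norm, and $\partial_\tau e_h^1=\tau^{-1}e_h^1$. You import $\partial_t^3u\in L^\infty(0,T;D(\L^{\frac34}))$ from \Cref{lem:consistency} just as the paper does; at $a=0$ you only drop a harmless factor $\lambda_1^{-1/4}$. Your $\discreteL^{\frac34}$ bounds are also fine.

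The gap is in the $\discreteL^{\frac14}$ bounds for $u_h^0$, $u_h^1$ and $\partial_\tau u_h^1$. You correctly see that \eqref{eq:fully_discrete_fractional_stability_ritz_below_half} gives only $\lambda_1^{-1/4}\|\L^{\frac12}u_0\|$, which $M_{\frac14}$ does not control. The paper's proof settles this step in one line, citing exactly that estimate and the factor $2$, so it does not supply the missing argument either, and your concern is well founded. Your fallback, however, is circular. The triangle inequality with $\|\L^{\frac14}(v-\Pi v)\|\lesssim h^{s-\frac12}$ bounds $\|\L^{\frac14}\ritzprojection u_0\|$, the \emph{continuous} norm of a finite element function, not $\|\discreteL^{\frac14}\ritzprojection u_0\|$. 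Passing from the former to the latter on $V_h$ is \eqref{eq:fully_discrete_fractional_stability} again, with the same unknown constant. The comparison that holds with constant one points the other way, $\|\L^{\frac14}\varphi_h\|\le\|\discreteL^{\frac14}\varphi_h\|$: both squared norms equal $\tfrac2\pi\int_0^\infty t^{-2}K(t,\varphi_h)^2\,\mathrm{d}t$, and for $\discreteL$ the infimum in the $K$-functional runs only over $V_h$. The same objection applies to your velocity bound.

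The step can be closed with the regularity you already invoke. For $v\in D(\L)$, \eqref{eq:fully_discrete_discrete_laplacian_equality} gives $\discreteL^{\frac12}\ritzprojection v=T_h^{1/2}\L v$ with $T_h\coloneqq\discreteL^{-1}\ltwoprojection=\ritzprojection\L^{-1}$, a nonnegative self-adjoint operator on $L^2(\domain)$ with $T_h^{1/2}=\discreteL^{-1/2}\ltwoprojection$. Hence
\[
  \|\discreteL^{\frac14}\ritzprojection v\|^2-\|\L^{\frac14}v\|^2
  =\bigl\langle (T_h^{1/2}-\L^{-1/2})\L v,\ritzprojection v\bigr\rangle
  +\bigl\langle \L^{\frac12}v,\ritzprojection v-v\bigr\rangle .
\]
On the convex domain, Aubin--Nitsche gives $\|T_h-\L^{-1}\|_{L^2\to L^2}\lesssim h^2$. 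Operator monotonicity of the square root then gives $\|T_h^{1/2}-\L^{-1/2}\|\le\|T_h-\L^{-1}\|^{1/2}\lesssim h$, so the right-hand side is $\lesssim h\,\|\L v\|\,\|\nabla v\|$.

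Apply this to $v=u(t_m)$ and $v=\tau^{-1}(u(t_1)-u_0)$. Their $D(\L)$-norms are bounded by the $H^{p+1}$-regularity of $u$ and $\partial_t u$ assumed in \Cref{lem:consistency}. This yields $\|\discreteL^{\frac14}\ritzprojection v\|^2\le\tfrac14 M_{\frac14}^2+Ch$, respectively $\le\tfrac14 (M'_{\frac14})^2+Ch$, and the required bounds follow for $h$ and $\tau$ small.

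Alternatively, enlarge $M_{\frac14}$ and $M'_{\frac14}$ by the constant of \eqref{eq:fully_discrete_fractional_stability}. The later analysis only needs data-dependent constants, and your splitting $\ltwoprojection u_0+(\ritzprojection-\ltwoprojection)u_0$ then suffices. Either repair must also be applied at every level in \Cref{cor:fully_discrete_convergence}, where the same step recurs.
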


\begin{proof}  
  Write $e_h^m = \ritzprojection u(t_m) - u_h^m$. At $m=0$ we have
  $u_h^0 = \ritzprojection u_0 = \ritzprojection u(t_0)$, so $e_h^0 = 0$ and the first estimate
  is trivial. At $m=1$, since $\ritzprojection$ is linear,
  \begin{equation*}
    e_h^1
    =
    \ritzprojection ( u(t_1) - u_0 - \tau v_0 - \tfrac{\tau^2}{2}\bigl( f^0 - q^0\, \L u_0 \bigr) )\,,
  \end{equation*}
  and the bracket is estimated by Taylor expansion about $t=0$, which, together with
  $\partial_t^2 u(0) = f^0 - q^0\L u_0$, is bounded by
  $\tau^3 \| \L^{b} \partial_t^3 u \|_{L^\infty(L^2)}$ in the
  $\L^{b}$-norm for $b \in \{ \frac12, \tfrac12 + a \}$. Applying
  \eqref{eq:fully_discrete_fractional_stability_ritz_below_half} at $a$ and
  \eqref{eq:fully_discrete_fractional_stability_ritz} at $\tfrac12 + a$ to pass from
  $\ritzprojection$ to the continuous norm gives
  \begin{equation*}
    \| \discreteL^{a} e_h^1 \| \lesssim \tau^3 \| \L^{\frac12} \partial_t^3 u \|_{L^\infty(L^2)}\,,
    \qquad
    \| \discreteL^{\frac12 + a} e_h^1 \| \lesssim \tau^3 \| \L^{\frac12 + a} \partial_t^3 u \|_{L^\infty(L^2)}\,,
  \end{equation*}
  \noindent both $\lesssim \tau^3$ by the standing hypothesis
  $\partial_t^3 u \in L^\infty ( 0,T; D ( \L^{\frac34} ) )$, which also controls the
  $\L^{\frac12}$-norm because $\| \L^{\frac12} w \| \leq
  \lambda_1^{-\frac14} \| \L^{\frac34} w \|$. %
  Since $e_h^0 = 0$, we have $\partial_\tau e_h^1 = \tau^{-1} e_h^1$, whence
  $\| \discreteL^{a} \partial_\tau e_h^1 \| \lesssim \tau^2$. %
  Note that from the proof, for this choice of starting values the
  error relative to $\ritzprojection u$ carries no spatial contribution at all. The asserted bounds are purely temporal ones,
  written in the form in which \Cref{cor:fully_discrete_convergence,cor:fully_discrete_optimal_error} consume them. 

  The uniform bounds
  \eqref{eq:fully_discrete_bounds_all_time_steps} follow by the triangle inequality from these
  estimates, the stability of $\ritzprojection$ (\eqref{eq:fully_discrete_fractional_stability_ritz_below_half} for $a$ and
  \eqref{eq:fully_discrete_fractional_stability_ritz} for $\tfrac12+a$) and the factor $2$ in
  the definition of $M_a$ and $M'_a$. 
\end{proof}

\begin{corollary}[Convergence of the fully discrete scheme]\label[corollary]{cor:fully_discrete_convergence}
  Let the assumptions of \Cref{thm:fully_discrete_local_convergence} hold, in
  particular, the mesh condition $\tau \leq \kappa h$ at the endpoint case $\theta = \tfrac14$, and let the
  discrete initial data be given by \eqref{eq:fully_discrete_initial_data}. Then there exist
  $\tau_0, h_0 > 0$, independent of $n$, such that for all $0 < \tau \leq \tau_0$ and
  $0 < h \leq h_0$ the fully discrete solution $\{ u_h^{n} \}_{n=0}^{N}$ exists, is unique, and
  satisfies
  \begin{equation*}
    \max_{0 \leq n \leq N}
      \| \discreteL^{\frac12 + a} \bigl( \ritzprojection u(t_n) - u_h^{n} \bigr) \|
      +
      \max_{1 \leq n \leq N}
      \| \discreteL^{a} \partial_\tau \bigl( \ritzprojection u(t_n) - u_h^{n} \bigr) \|
    \lesssim
    h^{p+1 - 2a} + \tau^2\,.
  \end{equation*}
\end{corollary}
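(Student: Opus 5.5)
The proof will be an induction over the time level $n$. \Cref{lem:fully_discrete_initial_conditions} serves as the base case and \Cref{lem:fully_discrete_local_existence} together with \Cref{thm:fully_discrete_local_convergence} as the induction step. The constraint that drives the argument is that the constants in \Cref{hyp:fully_discrete_existence_trajectory} must not deteriorate with $n$. We therefore fix once and for all the constant $C_\ast$ produced by the discrete Gr\"onwall inequality in the proof of \Cref{thm:fully_discrete_local_convergence}, i.e. $C_\ast^2 = (1+\cstcoerc)\,c_5 e^{c_4 T}$. This depends only on $T$, the data, $\alpha_{\min}$, $C_\Gamma$ and the consistency constants of \Cref{lem:consistency}. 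We use it as the constant in the error hypothesis at every level. Since the Gr\"onwall bound controls $\max_{0\le m\le n+1}\mathcal F_h^m$ by the initial quantity $\mathcal F_h^0+\mathcal F_h^1$ plus the defects, and not by the previous constant, the induction closes with the same $C_\ast$.

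For the base case, \Cref{lem:fully_discrete_initial_conditions} gives the error bounds at $m=0,1$ with $\tau^3\le\tau^2$, and the uniform bounds \eqref{eq:fully_discrete_bounds_all_time_steps} at $m=0,1$ for small $\tau,h$.

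For the induction step, suppose \Cref{hyp:fully_discrete_existence_trajectory} holds up to level $n$ with constant $C_\ast$. \Cref{lem:fully_discrete_local_existence} then applies, with $\tau\le\tau_0$ independent of $n,h$, and yields a unique $u_h^{n+1}$ with $n$-independent bounds $K_{\frac14},K_{\frac34}$ and the bound on $\partial_\tau u_h^{n+1}$. Next, \Cref{thm:fully_discrete_local_convergence} with $\tau\le\tau_1$ gives the error estimates at level $n+1$ with constant $C_\ast$, for both $a\in\{0,\frac14\}$.

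The main obstacle is recovering the \emph{uniform} bounds with the original constants $M_{\frac34},M_{\frac14},M'_{\frac14}$, since \Cref{lem:fully_discrete_local_existence} alone only gives the larger $K_a$. Here the factor $2$ in the definition of $M_a, M'_a$ is used. The triangle inequality gives
\[
\|\discreteL^{\frac34}u_h^{n+1}\|\le\|\discreteL^{\frac34}\ritzprojection\hat u^{n+1}\|+\|\discreteL^{\frac34}e_h^{n+1}\|\le\tfrac12 M_{\frac34}+C_\ast(h^{p-\frac12}+\tau^2),
\]
using \eqref{eq:fully_discrete_fractional_stability_ritz}. The $\discreteL^{\frac14}$ bound is analogous, via \eqref{eq:fully_discrete_fractional_stability_ritz_below_half}, where $\lambda_1^{-1/4}\|\L^{\frac12}u\|\le\|\L^{\frac34}u\|/\lambda_1^{1/2}$ requires absorbing constants into $M$. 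If the constants do not fit, I would enlarge $M_a$ harmlessly by redefining $M_a$ with an appropriate multiple; this is legitimate because $M_a$ only needs to dominate the Ritz projections. For $\partial_\tau$ we have $\partial_\tau\ritzprojection\hat u^{n+1}=\ritzprojection\tau^{-1}\int_{t_n}^{t_{n+1}}\partial_t u$, which is bounded by $\frac12 M'_{\frac14}$-type quantities. Choosing $h_0,\tau_0$ so that $C_\ast(h_0^{p-\frac12}+\tau_0^2)\le\frac12\min\{M_{\frac14},M_{\frac34},M'_{\frac14}\}$ returns the bounds with the original constants, independent of $n$. Note that $p-\frac12>0$ is needed, which holds since $p\ge1$.

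Iterating up to $n=N-1$ gives existence and uniqueness of the whole trajectory, with uniqueness coming level by level from \Cref{lem:fully_discrete_local_existence}. The maxima in the stated estimate are exactly the level-wise bounds with the uniform constant $C_\ast$, for $\tau\le\min\{\tau_0,\tau_1\}$ and $h\le h_0$. In the case $\theta=\frac14$ the mesh condition $\tau\le\kappa h$ is carried along unchanged.
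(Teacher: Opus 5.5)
Your proposal is correct and follows the paper's argument. The paper phrases it as a maximal-index contradiction: $n^\ast$ is the largest level at which the hypotheses of \Cref{thm:fully_discrete_local_convergence} hold. That is your induction, with \Cref{lem:fully_discrete_initial_conditions} as base case and \Cref{lem:fully_discrete_local_existence} plus \Cref{thm:fully_discrete_local_convergence} as step, and it is justified by the same observation that the constants are independent of $n$, $\tau$, $h$.

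Your extra care with the constants is well placed. \eqref{eq:fully_discrete_fractional_stability_ritz_below_half} only gives $\|\discreteL^{\frac14}\ritzprojection v\|\le\lambda_1^{-\frac14}\|\L^{\frac12}v\|$, which is not dominated by $\tfrac12 M_{\frac14}$, nor, for $\partial_t u$, by $\tfrac12 M'_{\frac14}$. Enlarging these constants is therefore what makes the paper's appeal to ``the factor $2$'' rigorous. The exponent $h^{p-\frac12}$ you write for $\|\discreteL^{\frac34}e_h^{n+1}\|$ is just a weaker, harmless version of the $h^{p+\frac12}$ that the theorem actually gives.
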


\begin{proof}
  Let $n^\ast$ be the largest index for which the hypotheses of
  \Cref{thm:fully_discrete_local_convergence}, \ie{}, the uniform bounds
  \eqref{eq:fully_discrete_bounds_all_time_steps} and the error estimate at all levels
  $m \leq n^\ast$ are satisfied. By \Cref{lem:fully_discrete_initial_conditions} they hold
  for $m = 0,1$, so $n^\ast \geq 1$. If $n^\ast < N$, then
  \Cref{lem:fully_discrete_local_existence} provides $u_h^{n^\ast+1}$ and
  \Cref{thm:fully_discrete_local_convergence} extends the error estimate to that level, the
  uniform bounds following from it by the triangle inequality together with the stability of
  $\ritzprojection$ and the factor $2$ in the definition of $M_a$ and $M'_a$, contradicting
  the maximality of $n^\ast$. Hence $n^\ast = N$. The argument is admissible because the constants in
  \Cref{thm:fully_discrete_local_convergence} are independent of $n$, $\tau$ and $h$, so a
  single pair $(\tau_0, h_0)$ works at every step.
\end{proof}

\begin{corollary}[Optimal-order error of the fully discrete scheme]\label[corollary]{cor:fully_discrete_optimal_error}
  Under the assumptions of \Cref{cor:fully_discrete_convergence} the full error of
  the fully discrete scheme satisfies
  \begin{equation*}
    \max_{0 \leq n \leq N} \| u(t_n) - u_h^{n} \|
    \lesssim
    h^{p+1} + \tau^2 ,
    \qquad
    \max_{0 \leq n \leq N} \| \nabla ( u(t_n) - u_h^{n} ) \|
    \lesssim
    h^{p} + \tau^2 ,
  \end{equation*}
  \noindent both of which are of optimal order in $h$ for the space $V_h$ of continuous
  piecewise polynomials of degree~$p$.
\end{corollary}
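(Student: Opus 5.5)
The plan is to split the full error with the Ritz projection,
\[
  u(t_n) - u_h^n = \bigl( u(t_n) - \ritzprojection u(t_n) \bigr) + e_h^n ,
  \qquad e_h^n = \ritzprojection u(t_n) - u_h^n ,
\]
and to bound the two pieces separately with the triangle inequality. This is done in the $L^2$-norm and in the $H^1$-seminorm $\|\nabla\cdot\|$.

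\textbf{Projection part.} We use the spatial regularity $u \in L^\infty(0,T;H^{p+1}(\domain))$ assumed in \Cref{lem:consistency}, which is inherited through \Cref{cor:fully_discrete_convergence}. The approximation estimate \eqref{eq:fully_discrete_approximation} with $s = p+1$ and $\Pi = \ritzprojection$ then gives
\[
  \| u(t_n) - \ritzprojection u(t_n) \| + h \, \| \nabla ( u(t_n) - \ritzprojection u(t_n) ) \| \lesssim h^{p+1} \, \| u \|_{L^\infty(H^{p+1})} ,
\]
uniformly in $n$. This is the optimal contribution in both norms.

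\textbf{Discrete part.} Here we invoke \Cref{cor:fully_discrete_convergence} with the weaker choice $a = 0$. It yields
\[
  \max_n \| \discreteL^{\frac12} e_h^n \| \lesssim h^{p+1} + \tau^2 .
\]
By the definition \eqref{eq:discrete_laplacian}, $\| \discreteL^{\frac12} e_h^n \| = \| \nabla e_h^n \|$, since $e_h^n \in V_h$. So the $H^1$ part of $e_h^n$ is even $\bigO(h^{p+1}+\tau^2)$, which is superconvergent relative to what is needed. For the $L^2$ part, the discrete Poincaré inequality \eqref{eq:discrete_poincare} with $a=0$, $b=\tfrac12$ gives
\[
  \| e_h^n \| \leq \lambda_1^{-\frac12} \| \nabla e_h^n \| \lesssim h^{p+1} + \tau^2 .
\]
Alternatively, one can take the $L^2$ bound directly from the second estimate of \Cref{thm:fully_discrete_local_convergence} at $a=0$.

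\textbf{Combining.} Adding the two parts gives $\| u(t_n) - u_h^n \| \lesssim h^{p+1} + \tau^2$ and $\| \nabla(u(t_n)-u_h^n) \| \lesssim h^p + h^{p+1} + \tau^2 \lesssim h^p + \tau^2$ for $h \leq 1$. The level $n=0$ is covered because $e_h^0 = 0$ by \Cref{lem:fully_discrete_initial_conditions}. The constants are independent of $n$, because those in \Cref{cor:fully_discrete_convergence} and in \eqref{eq:fully_discrete_approximation} are.

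\textbf{Main obstacle.} The only delicate point is making sure that \Cref{cor:fully_discrete_convergence} really delivers the $a=0$ estimate with rate $h^{p+1}$, and not only the $a=\tfrac14$ rate $h^{p+\frac12}$. The existence and induction argument needs the $a=\tfrac14$ bounds. However, \Cref{hyp:fully_discrete_existence_trajectory} and \Cref{thm:fully_discrete_local_convergence} are stated for both $a \in \{0,\tfrac14\}$ simultaneously, so the $a=0$ rate is carried along the induction. I would check this point explicitly and then simply read off the $a=0$ case.
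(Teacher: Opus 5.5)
Your proof is correct and matches the paper's argument: the same Ritz splitting $u(t_n)-u_h^n=(u(t_n)-\ritzprojection u(t_n))+e_h^n$, and the same approximation estimate with $s=p+1$ for the projection part. For the discrete part you likewise use the $a=0$ case of \Cref{cor:fully_discrete_convergence} together with $\|\discreteL^{\frac12}e_h^n\|=\|\nabla e_h^n\|$ and the discrete Poincar\'e inequality \eqref{eq:discrete_poincare}, then close with $h^{p+1}\leq h^{p}$ for $h\leq 1$.
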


\begin{proof}
  We split $u(t_n) - u_h^{n} = ( \hat u^{n} - \ritzprojection \hat u^{n} ) + e_h^{n}$. The
  approximation estimates for the Ritz projection with $s=p+1$, admissible because
  $u \in L^\infty ( 0,T;H^{p+1}(\domain) )$ and $u(t) \in H_0^1(\domain)$, give
  \begin{equation*}
    \| \hat u^{n} - \ritzprojection \hat u^{n} \|
    +
    h \| \nabla ( \hat u^{n} - \ritzprojection \hat u^{n} ) \|
    \lesssim
    h^{p+1} | \hat u^{n} |_{H^{p+1}(\domain)}
    \lesssim
    h^{p+1} .
  \end{equation*}
  \noindent For the remaining part, $e_h^{n} \in V_h$, so that
  \eqref{eq:discrete_poincare} with $a=0$ and $b=\tfrac12$ together with
  \Cref{cor:fully_discrete_convergence} yields
  $\| e_h^{n} \| \leq \lambda_1^{-\frac12} \| \discreteL^{\frac12} e_h^{n} \|
  \lesssim h^{p+1} + \tau^2$, while
  $\| \nabla e_h^{n} \| = \| \discreteL^{\frac12} e_h^{n} \| \lesssim h^{p+1} + \tau^2$. The
  triangle inequality now gives both assertions, the second because
  $h^{p+1} \leq h^{p}$ for $h \leq 1$.
\end{proof}

\section{Numerical experiments}\label{sec:numerics}

\noindent In this section we test the fully discrete scheme numerically on manufactured
solutions. Throughout, $\domain = \left( 0,1 \right)^2$, the
spatial discretization uses continuous Lagrange elements of degree $p=1,2,3$, and the source term
$f$ is constructed from the prescribed exact solution and the coefficients given in the respective examples. The parameter $\lambda \in \mathbb{N}$ controls the spatial oscillation of the exact solution. 

The scheme~\eqref{eq:fully_discrete_nonlinear_scheme} is used with
$\theta = \gamma = \tfrac12$ and $\eta = \frac14$, so that it reduces to the symmetric three-layer
form~\eqref{eq:nonlinear_scheme_extended}, in particular
$\theta > \tfrac14$, so no mesh condition is active. The spatial convergence studies use the
uniform mesh sequence $h = 2^{-2}, 2^{-3}, \dots, 2^{-6}$, and use a fixed number of time steps~$N=2^{12}$ such that $\tau=T2^{-12}$ is defined accordingly. This keeps the temporal
contribution below the spatial error for most data points. The
temporal convergence studies use the fixed mesh $h = 2^{-6}$ and $p = 3$ for time steps
$\tau = T 2^{-k}$, $k = 9, \dots , 12$. 

The nonlinear system at each time level is solved by the Picard
iteration~\eqref{eq:fully_discrete_nonlinear_iteration_scheme}, and the linear system of each
Picard step by a conjugate-gradient (cg-) method with relative and absolute tolerance
$10^{-12}$ and at most $1000$ iterations. The Picard loop is started from the analysed initial
guess $u_{h;0}^{n+1} = u_h^{n}$ and is stopped as soon as
\begin{equation}\label{eq:picard_stopping}
  \max \left\{
    \frac{| \Gamma_h^{n;\gamma}(u_{h;\ell+1}^{n+1}) - \Gamma_h^{n;\gamma}(u_{h;\ell}^{n+1}) |}
         {\max \bigl\{ | \Gamma_h^{n;\gamma}(u_{h;\ell+1}^{n+1}) |, 1 \bigr\}} ,\;
    \frac{\| u_{h;\ell+1}^{n+1} - u_{h;\ell}^{n+1} \|_\infty}
         {\max \bigl\{ \| u_{h;\ell}^{n+1} \|_\infty, 1 \bigr\}}
  \right\}
  < \varepsilon_{\mathrm{NL}} \coloneqq 10^{-10} ,
\end{equation}
with at most $L_{\max} = 10$ iterations permitted. Note that here $\| \cdot \|_\infty$ denotes the maximum over the coefficient vectors with respect to the nodal basis of~$V_h$. 
In every run reported below the criterion was met before $L_{\max}$ was
reached, at every time level. Note that the tolerances for the nonlinearity and the cg-solve are tighter than required such that the errors are not polluted in any form. 

The discrete initial data follow the Taylor
start~\eqref{eq:fully_discrete_initial_data}.
The numerical experiments are implemented in Python using the FEniCSx software framework~(\cite{Baratta2023DOLFINx,Scroggs2022DOFMaps,Scroggs2022Basix,Alnaes2014UFL}). The versions of all software dependencies are specified in the enclosed~\texttt{pyproject.toml} file in the repository. The scheme is realised in the auxiliary-variable form of~\Cref{rem:implementation}. %
Errors are evaluated at the final time only, which reduces the cost per run. This is
consistent with the theory, whose estimates are uniform over the time levels and therefore
hold at the final time in particular. Further, it is the natural choice in practice, since the
accumulation of the error means that the largest relative error is typically attained
there.

\begin{remark}[Implementation]\label{rem:implementation}
  In practice, we try to avoid reassembling the right-hand side in every Picard iteration of~\eqref{eq:fully_discrete_nonlinear_iteration_scheme} such that the assembly cost is only paid at each time step~$n$. This can be achieved by introducing the auxiliary variable %
  \begin{equation}\label{eq:fully_discrete_auxiliary_solution}
    v_{h}^{n+1}
    \coloneqq
    u_h^{n+1;\theta}.
  \end{equation}
  The idea is that this auxiliary variable cancels the terms that contain the nonlinearity on the right-hand side of~\eqref{eq:fully_discrete_nonlinear_iteration_scheme}. 
  Multiplying~\eqref{eq:fully_discrete_nonlinear_scheme} by~$\tau^2\theta$ and plugging~\eqref{eq:fully_discrete_auxiliary_solution} into~\eqref{eq:fully_discrete_nonlinear_scheme}, the nonlinear scheme reads 
  \begin{equation*}
    v_h^{n+1}
    +
    \tau^2 \theta \Gamma_h^{n;\gamma}(u_{h}^{n+1}) \discreteL v_h^{n+1}
    =
    \tau^2 \theta f^{n+1;\eta} + u_h^n.
  \end{equation*}
  The weighted source term~$f^{n+1;\eta} = \eta f((n+1)\tau) + (1-2\eta)f(n\tau) + \eta f((n-1)\tau)$ is realised by the standard quadrature used in FEniCSx of the $L^2$ scalar product of the right-hand side. This introduces only a quadrature error which is of greater order than an interpolation error and can thus be disregarded. %
  Then, we solve this nonlinear equation for every time step with the following Picard iteration: We seek a sequence of functions~$\{v_{h;\ell +1}^{n+1}\}_{\ell \geq 0} \subset V_h$ that solves the \emph{fully discrete linear iteration scheme} 
  \begin{equation*}
    v_{h;\ell +1}^{n+1}
    +
    \tau^2 \theta 
    \Gamma_h^{n;\gamma}(u_{h;\ell }^{n+1})
    \discreteL v_{h;\ell +1}^{n+1}
    =
    \tau^2 \theta f^{n+1;\eta} + u_h^n
  \end{equation*}
  with the initial guess~$u_{h;0}^{n+1} = u_h^n$ and update step %
  \(
    \theta u_{h;\ell +1}^{n+1}
    =
    v_{h;\ell +1}^{n+1}
    - (1 - 2\theta) u_{h}^n
    - \theta u_h^{n-1}.
  \)
\end{remark}

\begin{figure}[!ht]
	\centering
	\includestandalone{example_4}
    \caption{Example~1: relative $L^2$ errors (top and bottom) and $H^1_0$ errors (mid) at the last time step $T=\frac\pi{2}$ for $\lambda=1$ (left) and $\lambda=3$ (right). The order $p+1$ for $L^2$ and $p$ for $H^1$ in space are attained, with the exception of the last $p=3$ point for $\lambda=1$, where the fixed time step contributes at the same order. %
    Further, the bottom plot shows the second order convergence in time predicted by the theory. %
    Additionally, the plot on the top left and the plot on the bottom show the same convergence behaviour and similarly sized error for the locally linear scheme~$\gamma=0$.}\label{fg:example_4}
\end{figure}

\paragraph{Example~1: Optimal spatial and temporal convergence studies.} We present a first very simple constructed example to show that the proposed method converges with the optimal orders. We set the time interval $[0,\tfrac\pi{2}]$ with $\alpha\left( t \right)=1+t^2$, and $\beta\left( t \right)=2-\sin(t)$. Then we solve the PDE with exact solution 
\begin{equation*}
	u(x, y, t) = \sin(\lambda \pi x)\sin(\lambda \pi y)\sin(t).
\end{equation*}

\noindent \Cref{fg:example_4} reports the relative $L^2$ (top and bottom) and $H^1_0$ (middle)
errors at the final time~$T=\frac\pi2$ for~$\lambda=1$ (left) and~$\lambda=3$ (right), for
polynomial degrees $p=1,2,3$ under uniform mesh refinement. It confirms the optimal spatial
orders~$p+1$ in $L^2$ and~$p$ in $H^1$ predicted by \Cref{cor:fully_discrete_optimal_error}.
The time step size is fixed at~$\tau=\frac\pi{2}\cdot2^{-12}$. Note that the final point for
$h=2^{-6}$ and $p=3$ does not attain the optimal order, because the error has reached the
temporal plateau. The bottom plot shows that at this time step size the temporal contribution
is of the same order of magnitude. That final point is therefore not a pure spatial error, and
the degradation of the rate is the expected consequence rather than a property of the method.
The bottom plot also shows the behaviour under refinement of the time step at a fixed mesh,
for~$p=3$ and~$h=2^{-6}$. The error decreases at the second order predicted by
\Cref{cor:fully_discrete_optimal_error} until it reaches the level of the spatial
discretization error, after which further refinement in time brings no gain and the observed
rate deteriorates.
The observed orders in space and time are independent of the spatial oscillation
parameter~$\lambda$, although a larger~$\lambda$ increases the size of the error
in~\Cref{fg:example_4}. Increasing~$\lambda$ has no significant effect on the Picard iteration
either: the stopping criterion~\eqref{eq:picard_stopping} was met at every time step,
and~$L_{\max}$ was never reached.

\begin{figure}[!ht]
	\centering
	\begin{tikzpicture}

\definecolor{darkgray176}{RGB}{176,176,176}
\definecolor{darkcoral}{rgb}{0.8, 0.36, 0.27}
\definecolor{amber}{rgb}{1.0, 0.75, 0.0}

\begin{scope}[shift={(0cm,0cm)}]
\begin{axis}[
width=12cm,
height=4.5cm,
axis line style=very thick,
log basis x={10},
log basis y={10},
tick align=center,
tick pos=left,
legend columns=1,
legend style={/tikz/column 2/.style={column sep=7pt,}, font=\large},
legend pos=outer north east,
xlabel={{\Large time $t$}},
x label style={at={(axis description cs:0.5,-0.2)},anchor=north},
x grid style={darkgray176},
xmin=3e-3, xmax=1.5e3,
xmode=log,
xtick style={color=black},
xtick={1e-2,1e-1,1e0,1e1,1e2,1e3},
xticklabels={
  \(\displaystyle {10^{-2}}\),
  \(\displaystyle {10^{-1}}\),
  \(\displaystyle {10^{0}}\),
  \(\displaystyle {10^{1}}\),
  \(\displaystyle {10^{2}}\),
  \(\displaystyle {10^{3}}\),
},
ylabel={{\Large rel. drift of $\mathcal{E}_h^{n;\gamma}$}},
y label style={at={(axis description cs:-0.12,0.5)},anchor=south},
y grid style={darkgray176},
ymin=1e-15, ymax=1e-5,
ymode=log,
ytick style={color=black},
ytick={1e-15,1e-13,1e-11,1e-9,1e-7,1e-5},
yticklabels={
  \(\displaystyle {10^{-15}}\),
  \(\displaystyle {10^{-13}}\),
  \(\displaystyle {10^{-11}}\),
  \(\displaystyle {10^{-9}}\),
  \(\displaystyle {10^{-7}}\),
  \(\displaystyle {10^{-5}}\),
}
]

\addlegendimage{amber,ultra thick,solid}\addlegendentry{$\gamma=\tfrac12$}
\addlegendimage{darkcoral,ultra thick,solid}\addlegendentry{$\gamma=\tfrac14$}
\addlegendimage{ForestGreen,ultra thick,solid}\addlegendentry{$\gamma=0$}
\addlegendimage{no markers,black,thick,dashed}\addlegendentry{$\mathcal{O}(t)$}

\addplot [amber, ultra thick, solid] table[x=t,y=drift,col sep=comma]
  {./figures/energy_drift_gamma05.csv};

\addplot [darkcoral, ultra thick, solid] table[x=t,y=drift,col sep=comma]
  {./figures/energy_drift_gamma025.csv};

\addplot [ForestGreen, ultra thick, solid] table[x=t,y=drift,col sep=comma]
  {./figures/energy_drift_gamma0.csv};

\addplot [very thick, black, dashed] coordinates {(1e-2, 3.5e-11) (1e3, 3.5e-6)};

\end{axis}
\end{scope}

\begin{scope}[shift={(0cm,-4.5cm)}]
\begin{axis}[
width=12cm,
height=4.5cm,
axis line style=very thick,
log basis x={10},
log basis y={10},
tick align=center,
tick pos=left,
legend columns=1,
legend style={/tikz/column 2/.style={column sep=7pt,}, font=\large},
legend pos=outer north east,
xlabel={{\Large time step size $\tau$}},
x label style={at={(axis description cs:0.5,-0.2)},anchor=north},
x grid style={darkgray176},
xmin=3.0e-4, xmax=4.0e-3,
xmode=log,
xtick style={color=black},
xtick={0.003067961572265625,0.0015339807861328126,0.0007669903930664063,0.00038349519653320315},
xticklabels={
  \(\displaystyle {2\pi\cdot2^{-11}}\),
  \(\displaystyle {2\pi\cdot2^{-12}}\),
  \(\displaystyle {2\pi\cdot2^{-13}}\),
  \(\displaystyle {2\pi\cdot2^{-14}}\),
},
ylabel={{\Large rel. spread of $\mathcal{E}_h^{n;\gamma}$}},
y label style={at={(axis description cs:-0.12,0.5)},anchor=south},
y grid style={darkgray176},
ymin=3e-9, ymax=3e-4,
ymode=log,
ytick style={color=black},
ytick={1e-9,1e-7,1e-5,1e-3},
yticklabels={
  \(\displaystyle {10^{-9}}\),
  \(\displaystyle {10^{-7}}\),
  \(\displaystyle {10^{-5}}\),
  \(\displaystyle {10^{-3}}\),
}
]

\addlegendimage{darkcoral,ultra thick,dashed,every mark/.append style={fill=white,solid},mark=square*,mark size=3}\addlegendentry{$\theta=\tfrac14$}
\addlegendimage{amber,ultra thick,solid,every mark/.append style={fill=white,solid},mark=*,mark size=3}\addlegendentry{$\theta=\tfrac12$}
\addlegendimage{no markers,black,thick,dashed}\addlegendentry{$\mathcal{O}(\tau^2)$}

\addplot [darkcoral,ultra thick,dashed,every mark/.append style={fill=white,solid},mark=square*,mark size=4]
  table[x=tau,y=spread,col sep=comma]{./figures/energy_spread_theta025.csv};

\addplot [amber,ultra thick,solid,every mark/.append style={fill=white,solid},mark=*,mark size=4]
  table[x=tau,y=spread,col sep=comma]{./figures/energy_spread_theta05.csv};

\addplot [very thick, black, dashed]
table {
  0.003067961572265625 0.00013
  0.00038349519653320315 2.03125e-06
};

\end{axis}
\end{scope}

\end{tikzpicture}
    \caption{Example~2: relative drift (top) and relative spread (bottom) of the fully discrete
energy. We observe that for $\theta=\frac{1}{2}$ the energy is conserved over $100$ periods up
to an error induced by the Picard tolerance, which grows linearly in time. Further, we observe
for $\gamma=\frac12$ that the spread of the energy does not depend on the time step
size~$\tau$ when $\theta=\frac12$, whereas for $\theta=\frac14$ the energy is conserved only to
second order in~$\tau$.}\label{fg:example_5_energy}
\end{figure}

\paragraph{Example~2: Conservation of the discrete energy.} In this example we observe the conservation of
the fully discrete energy. We choose constant coefficients $\alpha=\beta=1$ and a vanishing
source $f$. Further, the initial conditions are given by $u_0=0$ and
$v_0=\sin(\pi x)\sin(\pi y)$. %
We choose $h=2^{-5}$ and $p=2$, with $T=10^2\cdot 2\pi$ and $\tau = 2\pi\cdot 2^{-13}$ for the
conservation study, and $\tau=2\pi\cdot 2^{-k}$ for $k=11,\dots,14$ for the convergence
study. %

In this setting the scheme conserves the fully discrete energy
of~\Cref{prop:fully_discrete_conservation} for the parameter~$\theta=\frac12$. The top plot
of~\Cref{fg:example_5_energy} shows the relative drift of the energy, that is, the relative
difference between the energy at time~$t$ and the energy at the initial time. %
We observe that over $100$ periods the energy is conserved for the three parameters
$\gamma\in\{0,\frac14,\frac12\}$. Since the identity holds for the exact solution of the
nonlinear scheme only, the remaining error grows linearly in~$t$. It stems from the nonlinear
solve and is governed by the Picard tolerance, set to
$\varepsilon_{\textrm{NL}}=10^{-12}$ with $L_{\max} = 30$. %
The case $\gamma=0$ confirms this reading. By~\Cref{rem:gamma-half} the scheme is then
linearly implicit, no Picard iteration is performed, and the energy is conserved all the same.
Accordingly, its drift stays at the level of round-off over the whole interval and shows no
growth in~$t$, whereas the other two curves grow linearly at the rate set
by~$\varepsilon_{\textrm{NL}}$. %

The bottom plot of~\Cref{fg:example_5_energy} shows the relative spread of the energy over the
whole run, that is, the largest relative difference between any two of its values, plotted
against the time step size~$\tau$. We observe that for $\gamma=\frac12$ this spread is
independent of~$\tau$ when $\theta=\frac12$, whereas for $\theta\neq\frac12$ the energy is not
conserved but approximated to second order in~$\tau$, as predicted
by~\Cref{rem:conservation_scope}.

\section{Conclusion}\label{sec:conclusion}
\noindent We have combined the symmetric three-layer time discretization of
\cite{KirchhoffTime} with a conforming finite element method of arbitrary polynomial degree
$p$ and a Picard iteration for the nonlinearity, and analysed the resulting fully discrete
scheme for a two-dimensional Kirchhoff-type equation with time-dependent coefficients. The
nonlinear equation solved at each time level was shown to have a unique solution, to which the
Picard iteration converges (\Cref{lem:fully_discrete_local_existence}). For the homogeneous problem with constant
coefficients and the parameter choice $\theta = \tfrac12$, the
exact solution of the scheme conserves the discrete energy $\mathcal{E}_h^{n;\gamma}$ identically
(\Cref{prop:fully_discrete_conservation}). This is the fully discrete counterpart of the
temporal invariants of~\cite{KirchhoffTime}, see \Cref{rem:gamma-half}, and it survives the spatial discretization because
the method is conforming. 
We further proved convergence of optimal order $\bigO ( h^{p+1} + \tau^2 )$ in $L^2$ and $\bigO ( h^{p} + \tau^2 )$ in $H^1$ (\Cref{cor:fully_discrete_optimal_error}) for the \emph{full} error $u(t_n) - u_h^n$. 
The numerical experiments of \Cref{sec:numerics} exhibit
exactly these rates: second order in time up to the point at which the spatial error becomes
dominant, and the full order $h^{p+1}$ in the $L^2$ norm of the full error. 

Several questions remain open. Both convergence results hold at every level of the
grid on the interval $[0,T]$ of \Cref{ass:existence_exact_solution}, itself local since
$T < T^{\star\star}$. Extending them up to the blow-up time would require a different argument.
Both also rest on regularity going well beyond that assumption. 
\Cref{thm:fully_discrete_local_convergence} demands four time derivatives in the
operator-domain scale and $H^{p+1}$ spatial regularity of $u$, $\partial_t u$,
$\partial_t^2 u$ and $f$, and whether the optimal rate survives under weaker hypotheses we do
not know. At the endpoint $\theta = \tfrac14$ the three-layer displacement energy is non-coercive
in the displacement norm, whether it is necessary for
the \emph{scheme} or only for this method of proof is open. At
$\theta  = \tfrac12$ the scheme conserves $\mathcal{E}_h^{n;\gamma}$ exactly
(\Cref{prop:fully_discrete_conservation}), but we have established no energy inequality for the
whole range with time-dependent coefficients, and claim no unconditional stability there.

Verifying \Cref{prop:fully_discrete_conservation} for the finite
element spaces used in the convergence studies, where the Picard and conjugate-gradient
tolerances also enter, lies beyond the present scope, as does a systematic study of the
sensitivity to those tolerances.

\section*{Declarations}
\ifblind
\noindent\textbf{Funding.} Funding information is withheld for double-blind review and
will be supplied with the camera-ready version.
\else
\noindent\textbf{Funding.}
The work of B.~D\"orich was funded by the Deutsche Forschungsgemeinschaft (DFG, German Research Foundation) -- Project-ID 258734477 -- SFB 1173.
 This work by A.~Rupp was supported by the Deutsche Forschungsgemeinschaft
(DFG, German Research Foundation) -- Project-ID 577175348. And in part supported by the Deutsche
Forschungsgemeinschaft under Germany's Excellence Strategy -- EXC-2047/2 -- 390685813, through
F.~Krumbiegel's and A.~Rupp's stay at the Hausdorff Research Institute for Mathematics. {Additionally, Z.~Vashakidze was supported by the Shota Rustaveli National Science Foundation of Georgia (SRNSFG) under Grant No. FR-25-215.} The authors declare no further funding for this work.
\fi

\noindent\textbf{Competing interests.} The authors declare that they have no competing
interests.

\noindent\textbf{Data and code availability.} \paperdataavailability

\noindent\textbf{Declaration of generative AI and AI-assisted technologies.} \paperdeclarationAI

%

		
		
		\phantomsection                
		
		\printbibliography[
		heading=bibliography,   
		title={References}      
		]                         
		
		\newif\ifForceBibBookmark
		\ForceBibBookmarktrue     
		
		\ifForceBibBookmark
		\phantomsection                  
		\pdfbookmark[1]{References}{bib} 
		\fi

\appendix
\section*{Appendix}\label{sec:appendix}
\setcounter{section}{1}

\subsection{Proof of the full-order convergence} \label{app:defects}

\begin{proof}[Proof of \Cref{lem:consistency}]
  In a first step, we estimate the difference in the nonlinearity in the first sum on the
  right-hand side of \eqref{eq:fully_discrete_summed}. Abbreviating
  $\Lambda^m \coloneqq \gamma \| \discreteL^\frac12 u_h^{m+1} \|^2
  + (1 - 2\gamma) \| \discreteL^\frac12 u_h^{m} \|^2
  + \gamma \| \discreteL^\frac12 u_h^{m-1} \|^2$, so that
  $\Gamma_h^{m;\gamma}(u_h^{m+1}) = \alpha^m + \beta^m \Lambda^m$. Adding and subtracting
  $\beta^m \Lambda^{m-1}$, we have that
  \begin{subequations}\label{eq:cons_gamma_estimate}
  \begin{equation}\label{eq:cons_gamma_diff}
    \bigl|
      \Gamma_h^{m;\gamma}(u_h^{m+1}) - \Gamma_h^{m-1;\gamma}(u_h^{m})
    \bigr|
    \leq
    \bigl| \alpha^m - \alpha^{m-1} \bigr|
    +
    \bigl| \beta^m - \beta^{m-1} \bigr| \, \Lambda^{m-1}
    +
    \beta_{\max} \bigl| \Lambda^{m} - \Lambda^{m-1} \bigr| .
  \end{equation}
  The first two terms are $\lesssim \tau$ by the Taylor expansion of the coefficients
  $\alpha, \beta \in C^1[0,T]$ and the boundedness of the fully discrete solution at the
  previous time steps, using
  \(\| \discreteL^\frac12 \varphi_h \|^2 = \langle\discreteL^\frac34\varphi_h,
  \discreteL^\frac14 \varphi_h\rangle \). For the third we apply the proportionality argument \eqref{eq:fully_discrete_nonlinearity_proportional} 
  to each of the three pairs of consecutive levels making up $\Lambda^{m} - \Lambda^{m-1}$,
  \begin{align}
    \| \discreteL^\frac12 u_h^{m+1} \|^2
    - \| \discreteL^\frac12 u_h^{m} \|^2
    &=
    \langle
      \discreteL^\frac34 (u_h^{m+1} + u_h^m),
      \discreteL^\frac14 (u_h^{m+1} - u_h^{m})
    \rangle \notag
    \\
    &\leq
    \tau
    \bigl(
      \| \discreteL^\frac34 u_h^{m+1} \|
      +\| \discreteL^\frac34 u_h^{m} \|
    \bigr)
    \| \discreteL^\frac14 \partial_\tau u_h^{m+1} \|
    \lesssim \tau ,
  \end{align}
\end{subequations}
  and likewise at the levels $m,m-1$ and $m-1,m-2$, so that
  $| \Gamma_h^{m;\gamma}(u_h^{m+1}) - \Gamma_h^{m-1;\gamma}(u_h^{m}) | \lesssim \tau$. For $m < n$ these norms come
  from~\eqref{eq:fully_discrete_bounds_all_time_steps}, whereas at the top index $m = n$, where
  $\Lambda^{n}$ reaches $u_h^{n+1}$ and $\partial_\tau u_h^{n+1}$, they come from the generic
  bounds \Cref{lem:fully_discrete_local_existence} supplies for the level it constructs,
  uniformly in $n$, $h$ and $\tau$. Overall, \eqref{eq:cons_gamma_estimate} gives the bound~\eqref{eq:cons_gamma}. Note that this bound does not depend on the parameter~$a$. 
  
  Next, we bound the defects~$\delta_{h;\frac14}^m$. The defect consists of the terms 
  \begin{subequations}\label{eq:cons_weak_estimate}
  \(
    \delta_{h;\frac14}^m
    =
    \delta_{h;f}^m
    +
    \delta_{h;1}^m.
  \)
  Then, for the first defect~$\delta_{h;f}^m$ we have that 
  \begin{multline}
    \langle
      \delta_{h;f}^m, \discreteL^{2a} (\partial_\tau e_h^{m+1} + \partial_\tau e_h^m)
    \rangle
    =
    \langle
      \discreteL^a (\ltwoprojection f^m - f_h^{m+1;\eta}),
      \discreteL^a (\partial_\tau e_h^{m+1} + \partial_\tau e_h^m)
    \rangle 
    \\
    \leq 
    \bigl(
      \| \discreteL^a (\ltwoprojection f^m - \interpolation f^m)\|
      +
      \| \discreteL^a (\interpolation f^m - f_h^{m+1;\eta})\|
    \bigr)
    \|\discreteL^a (\partial_\tau e_h^{m+1} + \partial_\tau e_h^m)\|.\label{eq:cons_weak_rhs_split}
  \end{multline}
  For the first term in the sum we observe that
  $\interpolation f^m \in V_h$, so that
  $\ltwoprojection f^m - \interpolation f^m
  = \ltwoprojection ( f^m - \interpolation f^m )$, and apply
  \eqref{eq:fully_discrete_fractional_stability} at $a\in\{0,\tfrac14\}$ followed by the approximation
  estimate \eqref{eq:fully_discrete_approximation}, obtaining
  \begin{equation}\label{eq:cons_weak_rhs_space}
    \| \discreteL^a (\ltwoprojection f^m - \interpolation f^m)\|
    {}\lesssim
    \| \L^a ( f^m - \interpolation f^m)\|
    \lesssim h^{p+1-2a},
  \end{equation}
  \noindent where the last step uses
  \eqref{eq:fully_discrete_approximation} with $s = p+1$, which is admissible by the
  assumed regularity $f \in L^\infty(0,T;H^{p+1}(\domain) \cap H_0^1(\domain))$.
  For the second term we use that $f_h^{m} = \interpolation f(m\tau)$ for every $m$, so that
  $\interpolation f^m - f_h^{m+1;\eta} = \interpolation ( f^m - f^{m+1;\eta} )$, and
  apply \eqref{eq:fully_discrete_fractional_stability} at $a\in\{0,\tfrac14\}$ directly to this discrete
  function, followed by a Taylor approximation of the source term:
  \begin{equation}\label{eq:cons_weak_rhs_time}
    \| \discreteL^a (\interpolation f^m - f_h^{m+1;\eta})\|
    {}=
    \| \discreteL^a \interpolation ( f^m - f^{m+1;\eta} ) \|
    \lesssim
    \| \L^a ( f^m - f^{m+1;\eta} ) \|
    \lesssim
    {\tau^2} \eta \| \L^a \partial_t^2 f^m\|
    \lesssim \tau^2,
  \end{equation}
  where the last step uses the assumed regularity
  $\partial_t^2 f \in L^\infty ( 0,T; D ( \L^{\frac14} ) )$.
  Similarly, we may treat the second defect~$\delta_{h;1}^m$. We use a Taylor expansion of the solution, stability and approximation estimates to obtain 
  \begin{multline}
    \tfrac{1}{2}
    \langle
      \delta_{h;1}^m, \discreteL^{2a} (\partial_\tau e_h^{m+1} + \partial_\tau e_h^m)
    \rangle
    =
    \tfrac{1}{2}
    \langle
      \discreteL^a (\partial_\tau^2\ritzprojection \hat u^{m+1} - \ltwoprojection \partial_t^2 \hat u^m),
      \discreteL^a (\partial_\tau e_h^{m+1} + \partial_\tau e_h^m)
    \rangle
    \\
    \leq \tfrac12
    \bigl(
      \| \discreteL^a (\ritzprojection - \ltwoprojection) \partial_\tau^2 \hat u^{m+1} \|
      +
      \| \discreteL^a \ltwoprojection (\partial_\tau^2 \hat u^{m+1} - \partial_t^2 \hat u^m)\|
    \bigr)
    \|\discreteL^a (\partial_\tau e_h^{m+1} + \partial_\tau e_h^m)\|,\label{eq:cons_weak_sol_split}
  \end{multline}
  where we added and subtracted
  $\ltwoprojection \partial_\tau^2 \hat u^{m+1}$ and used
  $\partial_\tau^2 \ritzprojection \hat u^{m+1} = \ritzprojection \partial_\tau^2 \hat u^{m+1}$.
  Note that $\ritzprojection$ is applied only to $\partial_\tau^2 \hat u^{m+1}$, a linear
  combination of $\hat u^{m+1}, \hat u^{m}, \hat u^{m-1} \in D ( \L^{\frac34} ) \subset
  H_0^1(\domain)$, where it is defined. For the first term, $(\ritzprojection -
  \ltwoprojection) \partial_\tau^2 \hat u^{m+1} \in V_h$, so
  \eqref{eq:fully_discrete_fractional_stability} at $a\in\{0,\tfrac14\}$ passes to the continuous norm
  and \eqref{eq:fully_discrete_approximation} with $s=p+1$ applies to each summand,
  \begin{equation}
    \| \discreteL^a (\ritzprojection - \ltwoprojection) \partial_\tau^2 \hat u^{m+1} \|
    \lesssim
    \| \L^a (\ritzprojection \partial_\tau^2 \hat u^{m+1} - \partial_\tau^2 \hat u^{m+1})\|
    +
    \| \L^a (\partial_\tau^2 \hat u^{m+1} - \ltwoprojection \partial_\tau^2 \hat u^{m+1})\|
    \lesssim h^{p+1-2a},\label{eq:cons_weak_sol_space}
  \end{equation}
  because $\partial_\tau^2 \hat u^{m+1}$ is a weighted average of
  $\partial_t^2 u$ over $[t_{m-1},t_{m+1}]$, whence
  $| \partial_\tau^2 \hat u^{m+1} |_{H^{p+1}(\domain)} \leq
  \| \partial_t^2 u \|_{L^\infty(0,T;H^{p+1}(\domain))}$,
  and, for the temporal part,
  \begin{equation}
    \| \discreteL^a \ltwoprojection ( \partial_\tau^2 \hat u^{m+1} - \partial_t^2 \hat u^{m} ) \|
    \lesssim
    \| \L^a ( \partial_\tau^2 \hat u^{m+1} - \partial_t^2 \hat u^{m} ) \|
    \lesssim
    {\tau^2} \| \L^a \partial_t^4 \hat u \|_{L^\infty(L^2)}
    \lesssim \tau^2,\label{eq:cons_weak_sol_time}
  \end{equation}
  \end{subequations}
  where the first step is \eqref{eq:fully_discrete_fractional_stability} at
  $a\in\{0,\tfrac14\}$ for $\ltwoprojection$ and the second is the standard consistency error of the
  central second difference, written in integral form so that only
  $\partial_t^4 u \in L^\infty ( 0,T; D ( \L^{\frac14} ) )$ is used. \eqref{eq:cons_weak_estimate} yields exactly the bound~\eqref{eq:cons_weak} for $\delta_{h;\frac14}^m$. 
  
  The next step consists of estimating the remaining defects 
  \(
    \delta_{h;\frac34}^m
    =
    \delta_{h;2}^m
    +
    \delta_{h;3}^m
    +
    \delta_{h;4}^m
  \)
  for $m\in\{1,n\}$. 
  Consider defect~$\delta_{h;2}^m$ for $m\in\{1,n\}$, then, by Taylor expansion we have that 
  \begin{subequations}\label{eq:cons_stiff_end_estimate}
  \begin{align}\label{eq:cons_stiff_end_time}
    \|
      \discreteL^{a - \frac12} \delta_{h;2}^m
    \|
    &=
    \Gamma_h^{m;\gamma}(u_h^{m+1})
    \|
      \discreteL^{\frac12 + a} \ritzprojection \theta (\hat u^{m+1} - 2\hat u^m + \hat u^{m-1})
    \|
    \leq \tau^2 C_\Gamma \theta 
    \| \discreteL^{\frac12 + a} \ritzprojection \partial_t^2 \hat u^m \|.
  \end{align}
  For the defect~$\delta_{h;3}^m$ for $m\in\{1,n\}$, we use Taylor expansion on the norm of the solution 
  \begin{align}\label{eq:cons_stiff_end_norm}
    \|
      \discreteL^{a - \frac12} \delta_{h;3}^m
    \|
    &=
        \beta^m \bigl| \gamma \|\nabla \hat u^{m+1}\|^2
        - 2\gamma \|\nabla \hat u^{m}\|^2
        + \gamma \|\nabla \hat u^{m-1}\|^2 \bigr|
      \|
        \discreteL^{\frac12 + a} \ritzprojection \hat u^m
      \|\notag
      \\
      &\leq
      \tau^2\beta^m\gamma 
      \bigl(
        \partial_t^2 \|\nabla \hat u^{m}\|^2
      \bigr)
      \|
        \discreteL^{\frac12 + a} \ritzprojection \hat u^m
      \|.
  \end{align}
  Finally, the last consistency defect $\delta_{h;4}^m$ is the projection part of the coefficient. By Ritz
  orthogonality, $\langle \nabla ( \hat u^m - \ritzprojection \hat u^m ), \nabla \varphi_h \rangle = 0$
  for all $\varphi_h \in V_h$, so that the quantities $\rho^m$ satisfy 
  \begin{equation*}
    \rho^m
    =
    \| \nabla \ritzprojection \hat u^{m} \|^2 - \| \nabla \hat u^{m} \|^2
    =
    - \| \nabla ( \hat u^{m} - \ritzprojection \hat u^{m} ) \|^2 ,
    \qquad\text{hence}\qquad
    | \rho^m | \lesssim h^{2p},
  \end{equation*}
  by the $H^1$-approximation estimate for $\ritzprojection$ with $s=p+1$. Since
  the weights $\gamma, 1-2\gamma, \gamma$ are non-negative and sum to $1$, the same bound holds
  for their combination, and therefore for $m\in\{1,n\}$ we have that
  \begin{align}\label{eq:cons_stiff_end_gamma}
    \|
      \discreteL^{a - \frac12} \delta_{h;4}^m
    \|
    &=
        \beta^m \bigl| \gamma \rho^{m+1}
        + (1 - 2\gamma) \rho^{m}
        + \gamma \rho^{m-1} \bigr|
      \|
        \discreteL^{\frac12 + a} \ritzprojection \hat u^m
      \|\notag
      \\
      &\leq
      \beta^m \max_{\mu \in \{ m-1,m,m+1 \}} |\rho^\mu| 
      \|
        \discreteL^{\frac12 + a} \ritzprojection \hat u^m
      \|
      \lesssim
      \beta^m
      h^{2p}
      \|
        \discreteL^{\frac12 + a} \ritzprojection \hat u^m
      \|.
  \end{align}
  \end{subequations}
  All right-hand sides are controlled by the chain
  \begin{equation}\label{eq:cons_ritz_chain}
    \| \discreteL^{\frac12 + a} \ritzprojection \partial_t^{j} \hat u^m \|
    \leq
    \lambda_1^{a - \frac14} \, \| \discreteL^{\frac34} \ritzprojection \partial_t^{j} \hat u^m \|
    \leq
    \lambda_1^{a - \frac14} \, \| \L^{\frac34} \partial_t^{j} \hat u^m \|
    \lesssim 1 ,
    \qquad j = 0,\dots,3 ,
  \end{equation}
  valid for $0 \leq a \leq \tfrac14$, whose first step is~\eqref{eq:discrete_poincare} and
  whose second is~\eqref{eq:fully_discrete_fractional_stability_ritz} at the
  exponent~$\tfrac34$. The final bound is \Cref{ass:existence_exact_solution} for $j=0$, which
  gives $\| \L^{\frac34} \hat u^m \| \leq \tfrac12 M_{\frac34}$, and the assumed temporal
  regularity $\partial_t^{j} u \in L^\infty ( 0,T;D ( \L^{\frac34} ) )$ for $j=1,2,3$. Applied
  with $j = 0,2$, this turns~\eqref{eq:cons_stiff_end_estimate}
  into~\eqref{eq:cons_stiff_end}.
  
  With the end points from the summation by parts estimated, we turn to the time derivatives of the last defects 
  \(
    \partial_\tau \delta_{h;\frac34}^m
    =
    \partial_\tau \delta_{h;2}^m
    +
    \partial_\tau \delta_{h;3}^m
    +
    \partial_\tau \delta_{h;4}^m
  \)
  for $m = 2,\dots,n$. The ideas are completely analogous to \eqref{eq:cons_stiff_end_estimate} with the addition that due to the assumed temporal regularity we obtain an additional order~$\tau$ which cancels the~$\tau^{-1}$ in the definition of~$\partial_\tau$. %
  For $\partial_\tau \delta_{2;h}^m$ for $2\leq m\leq n$, by Taylor expansion, the estimate \eqref{eq:cons_gamma_diff}, and $\Gamma_h^{n;\gamma}(u_h^{n+1}) \leq C_\Gamma$ by the bound established in the
  proof of \Cref{lem:fully_discrete_local_existence}, we have that 
  \begin{subequations}\label{eq:cons_stiff_estimate}
  \begin{alignat}{1}
    \|
      \discreteL^{a - \frac12} \partial_\tau\delta_{h;2}^m
    \|
    &\leq \tau^{-1} \theta 
    \bigl(
        | \Gamma_h^{m;\gamma}(u_h^{m+1}) - \Gamma_h^{m-1;\gamma}(u_h^{m}) |
        \| \discreteL^{\frac12 + a} \ritzprojection (\hat u^{m+1} - 2\hat u^m + \hat u^{m-1})
        \|\notag
        \\ 
        & \quad  +
        \Gamma_h^{m-1;\gamma}(u_h^{m})
        \| \discreteL^{\frac12 + a} \ritzprojection (\hat u^{m+1} - 3\hat u^m + 3\hat u^{m-1} - \hat u^{m-2})
        \|
    \bigr)\notag
    \\
    &\lesssim \tau^2 \theta 
    \bigl(\| \discreteL^{\frac12 + a} \ritzprojection \partial_t^2 \hat u^m \|
    + C_\Gamma \| \discreteL^{\frac12 + a} \ritzprojection \partial_t^3 \hat u^m \|
    \bigr).
  \end{alignat}
  For the derivative of $\partial_\tau \delta_{h;3}^m$ for $2\leq m\leq n$ using Taylor expansion of~$\beta$ for the first term, Taylor expansion of the norm of the solution for all three terms, and Taylor expansion of the solution for the last term, we have that 
  \begin{multline}
    \|
      \discreteL^{a - \frac12} \partial_\tau \delta_{h;3}^m
    \|
    \leq \tau^{-1}
    \Bigl[
        |\beta^m - \beta^{m-1}|\, |\gamma \|\nabla \hat u^{m+1}\|^2
        - 2\gamma \|\nabla \hat u^{m}\|^2
        + \gamma \|\nabla \hat u^{m-1}\|^2|\,
      \|
        \discreteL^{\frac12 + a} \ritzprojection \hat u^m
      \|\notag
      \\
      \qquad +
        \beta^{m-1} |\gamma \|\nabla \hat u^{m+1}\|^2
        - 3\gamma \|\nabla \hat u^{m}\|^2
        + 3\gamma \|\nabla \hat u^{m-1}\|^2
        - \gamma \|\nabla \hat u^{m-2}\|^2|\,
      \|
        \discreteL^{\frac12 + a} \ritzprojection \hat u^m
      \|\notag
      \\
      \qquad +
        \beta^{m-1} |\gamma \|\nabla \hat u^{m}\|^2
        - 2\gamma \|\nabla \hat u^{m-1}\|^2
        + \gamma \|\nabla \hat u^{m-2}\|^2|\,
      \|
        \discreteL^{\frac12 + a} \ritzprojection (\hat u^m - \hat u^{m-1})
      \|\notag
      \Bigr]
      \\
      \lesssim \tau^2 
      \bigl[
      (\partial_t^2 \|\nabla \hat u^{m}\|^2) \| \discreteL^{\frac12 + a} \ritzprojection \hat u^m \|
      +
       (\partial_t^3 \|\nabla \hat u^{m}\|^2) \| \discreteL^{\frac12 + a} \ritzprojection \hat u^m \|
      +
       (\partial_t^2 \|\nabla \hat u^{m-1}\|^2) \| \discreteL^{\frac12 + a} \ritzprojection \partial_t \hat u^m \|
      \bigr].
  \end{multline}
  Finally, for the derivative of $\partial_\tau \delta_{h;4}^m$ for $2\leq m\leq n$, we have that 
  \begin{equation*}
    | \rho'(t) |
    =
    2 \bigl| \bigl\langle \nabla ( I - \ritzprojection ) u(t) , \nabla ( I - \ritzprojection ) \partial_t u(t) \bigr\rangle \bigr|
    \leq
    2 \| \nabla ( I - \ritzprojection ) u(t) \| \, \| \nabla ( I - \ritzprojection ) \partial_t u(t) \|
    \lesssim
    h^{2p} ,
  \end{equation*}
  which allows us to use Taylor expansion of $\rho$. This, Taylor expansion of~$\beta$ for the first term, and Taylor expansion of the solution for the last term we obtain 
  \begin{align}
    \|
      \discreteL^{a - \frac12} \partial_\tau \delta_{h;4}^m
    \|
    &\leq \tau^{-1}
    \bigl(
        |\beta^m - \beta^{m-1}|\, |\gamma \rho^{m+1}
        + (1 - 2\gamma) \rho^{m}
        + \gamma \rho^{m-1}|\,
      \|
        \discreteL^{\frac12 + a} \ritzprojection \hat u^m
      \|\notag
      \\
      &\qquad  +
        \beta^{m-1} |\gamma (\rho^{m+1} - \rho^{m})
        + (1 - 2\gamma) (\rho^{m} - \rho^{m-1})
        + \gamma (\rho^{m-1} - \rho^{m-2})|\,
      \|
        \discreteL^{\frac12 + a} \ritzprojection \hat u^m
      \|\notag
      \\
      &\qquad  +
        \beta^{m-1} |\gamma \rho^{m}
        + (1 - 2\gamma) \rho^{m-1}
        + \gamma \rho^{m-2}|\,
      \|
        \discreteL^{\frac12 + a} \ritzprojection (\hat u^m - \hat u^{m-1})
      \|
      \bigr)\notag
      \\
      &\leq
      \beta_\mathrm{max} 
      ( h^{2p} \| \discreteL^{\frac12 + a} \ritzprojection \hat u^m \|
      +
      h^{2p} \| \discreteL^{\frac12 + a} \ritzprojection \hat u^m \|
      +
      h^{2p} \| \discreteL^{\frac12 + a} \ritzprojection \partial_t \hat u^m \|
       )
  \end{align}
  \end{subequations}
    \eqref{eq:cons_stiff_time} now follows from~\eqref{eq:cons_stiff_estimate}
  and~\eqref{eq:cons_ritz_chain}, applied with $j = 0,\dots,3$.

It remains to treat the stabilization term $S_h^m$ for $1\leq m\leq n$. 
From the
definition
\(
e_h^m
=
\ritzprojection \widehat{u}^{m}
-
u_h^m
\)
and the self-adjointness of $\discreteL$, we obtain
\begin{align*}
|\sigma^m|
&=
|
\|\nabla u_h^m\|^2
-
\|\nabla \ritzprojection\widehat{u}^{m}\|^2
|
=
|
\langle
\discreteL^{\frac12}
(
u_h^m+\ritzprojection\widehat{u}^{m}
),
\discreteL^{\frac12}
(
u_h^m-\ritzprojection\widehat{u}^{m}
)
\rangle
|
\\
&\lesssim
(
\|\discreteL^{\frac12 - a}u_h^m\|
+
\|\discreteL^{\frac12 - a}\ritzprojection\widehat{u}^{m}\|
)
\|\discreteL^{\frac12 + a}e_h^m\|
\lesssim
\|\discreteL^{\frac12 + a}e_h^m\|,
\end{align*}
where we used~\eqref{eq:discrete_poincare}, the uniform bounds on the discrete solution, and \eqref{eq:fully_discrete_fractional_stability_ritz} at $a=0$, and the uniform bounds on the discrete solution together with
the bound \eqref{eq:fully_discrete_fractional_stability_ritz_below_half} at $a=\tfrac14$, where the norm of the solution is controlled by
\Cref{ass:existence_exact_solution}.
Moreover, by the commutation identity \eqref{eq:ritz_commutation}, we have
\(
\discreteL^{1 + a}\ritzprojection\widehat{u}^{m}
=
\discreteL^{a}\ltwoprojection \L\widehat{u}^{m}.
\)
By the continuous equation at the time step $t_m$, we obtain 
\[
\L\widehat{u}^{m}
=
(q^m)^{-1}
(f^m-\partial_t^2\widehat{u}^{m}).
\]
Since $\alpha^m\geq\alpha_{\min}>0$, the assumed regularity and the stability of
$\ltwoprojection$ imply
\begin{align*}
\|\discreteL^{1 + a}\ritzprojection\widehat{u}^{m}\|
&=
\|\discreteL^{a}\ltwoprojection \L\widehat{u}^{m}\|
\lesssim
\|\L^{a}\L\widehat{u}^{m}\|
\lesssim
\|\L^{a}f^m\|
+
\|\L^{a}\partial_t^2\widehat{u}^{m}\|
\lesssim 1.
\end{align*}
Thus, for the stabilization term $S_h^m$, we have
\begin{equation*}
|
\langle
\discreteL^a S_h^m,
\discreteL^a \varphi_h
\rangle
|
\lesssim
(
\|\discreteL^{\frac12 + a} e_h^{m+1}\|
+
\|\discreteL^{\frac12 + a} e_h^m\|
+
\|\discreteL^{\frac12 + a} e_h^{m-1}\|
)
\|\discreteL^a \varphi_h\|.
\end{equation*}
This is precisely the last estimate~\eqref{eq:cons_stab} we needed to establish. 
\end{proof}

\subsection{Projection and interpolation operators}\label{app:projections}

\noindent We prove here the estimates collected
in~\Cref{sec:full-discretization} and record the references for the standard facts they rest
on. The approximation estimates of~\eqref{eq:fully_discrete_approximation} for
$\ltwoprojection$ and $\ritzprojection$, and the $L^2$-estimate for $\ritzprojection$ obtained
by the Aubin--Nitsche duality argument, are classical, see \cite[Ch.~0, 4 \& 5]{BrennerScott2008}
or \cite{KnabnerAngermann2021}; the quasi-interpolation operator $\interpolation$ is the one
of~\cite{Clement1975} for $p=1$ and, for general $p$ and with the homogeneous boundary condition
built in, the one of~\cite{ErnGuermond2017}, both of which are $L^2$- and $H_0^1$-stable and
satisfy that estimate by construction. The stability we use is stated separately for each
operator rather than as a blanket assertion: $\ltwoprojection$ is $L^2$-stable with constant
$1$, and its $H_0^1$-stability holds on quasi-uniform families by~\cite{CrouzeixThomee1987} and,
more generally, on shape-regular families under a local mesh condition
by~\cite{BramblePasciakSteinbach2002}; $\ritzprojection$ is $H_0^1$-stable with constant $1$,
and in weaker norms we use only~\eqref{eq:fully_discrete_approximation},
\eqref{eq:fully_discrete_fractional_stability_ritz} and the commutation
identity~\eqref{eq:fully_discrete_discrete_laplacian_equality}; $\interpolation$ is $L^2$- and
$H_0^1$-stable by construction.

\noindent The fractional estimate of \eqref{eq:fully_discrete_approximation} follows from the
first by interpolation: for $w \in H^1_0(\domain)$ we have
$\| \L^{\frac14} w \|^2 = \langle \L^{\frac12} w, w \rangle \leq \| \nabla w \| \, \| w \|$, and
applying this to $w = v - \Pi v$ and inserting the first estimate gives
$\| \L^{\frac14} ( v - \Pi v ) \| \lesssim h^{s-\frac12} | v |_{H^s(\domain)}$.

\noindent For~\eqref{eq:discrete_poincare}, every eigenvalue of $\discreteL$ is a Rayleigh
quotient over $V_h \subset H_0^1(\domain)$, hence bounded below by $\lambda_1$ independently of
$h$, and the spectral calculus on $V_h$ gives the first claim; combining it with
\eqref{eq:fully_discrete_fractional_stability_ritz} at $a = \tfrac12$ gives \eqref{eq:fully_discrete_fractional_stability_ritz_below_half}. For
\eqref{eq:discrete_inverse}, the inverse inequality states that the largest eigenvalue of
$\discreteL$ satisfies $\lambda_{\max,h} \leq \cstinv^2 h^{-2}$, and the spectral calculus again
gives the claim.

\begin{proof}[Proof of \eqref{eq:fully_discrete_fractional_stability} and
\eqref{eq:fully_discrete_fractional_stability_ritz}]
  Throughout we use that
  $\| \discreteL^{\frac12} \varphi_h \| = \| \nabla \varphi_h \|$ for $\varphi_h \in V_h$, and
  that $D ( \L^{a} )$ and $D ( \discreteL^{a} )$ are the interpolation scales generated
  by the two self-adjoint positive definite operators $\L$ and $\discreteL$, so that a
  linear map bounded at two exponents is bounded at every exponent in between, with the
  interpolated constant.

  For \eqref{eq:fully_discrete_fractional_stability} it suffices, by interpolation, to
  treat $a=0$ and $a=\tfrac12$. Both $\ltwoprojection$ and $\interpolation$ are defined on
  $L^2(\domain)$, so both endpoints are meaningful. At $a=0$ the assertion is their
  $L^2$-stability, and at $a=\tfrac12$ it reads
  $\| \nabla \ltwoprojection v \| + \| \nabla \interpolation v \| \lesssim \| \nabla v \|$,
  which is their $H^1_0$-stability; both hold by the references recorded above, and the
  interpolation of the two fractional-power scales used here is the standard interpolation of
  Hilbert scales \cite{BerghLofstrom1976}.

  For \eqref{eq:fully_discrete_fractional_stability_ritz} we again interpolate, now between
  $a=\tfrac12$ and $a=1$. At $a=\tfrac12$,
  \begin{equation*}
    \| \discreteL^{\frac12} \ritzprojection v \|
    =
    \| \nabla \ritzprojection v \|
    \leq
    \| \nabla v \|
    =
    \| \L^{\frac12} v \|\,,
  \end{equation*}
  \noindent again because the Ritz projection is the $H^1_0$-orthogonal projection onto
  $V_h$. At $a=1$ we use the defining property of $\ritzprojection$: for every
  $\varphi_h \in V_h$ and $v \in D ( \L )$,
  $\langle \discreteL \ritzprojection v, \varphi_h \rangle
  = \langle \nabla \ritzprojection v, \nabla \varphi_h \rangle
  = \langle \nabla v, \nabla \varphi_h \rangle
  = \langle \L v, \varphi_h \rangle
  = \langle \ltwoprojection \L v, \varphi_h \rangle$, that is,
  $\discreteL \ritzprojection v = \ltwoprojection \L v$,
  which is~\eqref{eq:fully_discrete_discrete_laplacian_equality}, whence $\| \discreteL \ritzprojection v \| = \| \ltwoprojection \L v \|
  \leq \| \L v \|$. Both endpoints hold with constant $1$, so does the interpolant.
\end{proof}

\noindent For $\varphi_h \in V_h$ we have $\ltwoprojection \varphi_h = \varphi_h$, so
\eqref{eq:fully_discrete_fractional_stability} reads
$\| \discreteL^{a} \varphi_h \| \lesssim \| \L^{a} \varphi_h \|$ for
$0 \leq a \leq \tfrac12$, which is the form in which it is used below.

\subsection{The three-layer displacement energy}\label{app:three_layer}

\begin{proof}[Proof of \Cref{lem:three_layer_displacement_energy}]
  Non-negativity of $\mathcal{W}_h^m$ holds by definition, both
  weights being non-negative for $\theta \geq \tfrac14$. %
  Expanding the squares in the definition of $\mathcal{W}_h^m$ gives the equivalent form
  \begin{equation}\label{eq:three_layer_expanded}
    \mathcal{W}_h^m
    =
    \theta \bigl( \| v_h^m \|^2 + \| v_h^{m-1} \|^2 \bigr)
    +
    ( 1 - 2\theta ) \langle v_h^m, v_h^{m-1} \rangle .
  \end{equation}
  Since $\theta \leq \tfrac12$ we have
  $1 - 2\theta \geq 0$, so applying
  $| \langle v_h^m, v_h^{m-1} \rangle | \leq \tfrac12 ( \| v_h^m \|^2 + \| v_h^{m-1} \|^2 )$
  to~\eqref{eq:three_layer_expanded} in both directions yields
  \begin{equation*}
    \tfrac{4\theta-1}{2} ( \| v_h^m \|^2 + \| v_h^{m-1} \|^2 )
    \leq
    \mathcal{W}_h^m
    \leq
    \tfrac12 ( \| v_h^m \|^2 + \| v_h^{m-1} \|^2 ) ,
  \end{equation*}
  which is~\eqref{eq:three_layer_upper}, and, after discarding the term
  $\| v_h^{m-1} \|^2$ on the left, also~\eqref{eq:three_layer_lower} whenever $4\theta - 1 > 0$. %
  For~\eqref{eq:three_layer_endpoint} split $v_h^m$ into its half-sum and half-difference,
  $v_h^m = \tfrac12 ( v_h^m + v_h^{m-1} ) + \tfrac12 ( v_h^m - v_h^{m-1} )$, and using
  $\| x + y \|^2 \leq 2 \| x \|^2 + 2 \| y \|^2$,
  \begin{equation*}
    \| v_h^m \|^2
    \leq
    \tfrac12 \| v_h^m + v_h^{m-1} \|^2 + \tfrac12 \| v_h^m - v_h^{m-1} \|^2
    \leq
    2 \mathcal{W}_h^m + \tfrac12 \| v_h^m - v_h^{m-1} \|^2\,.
  \end{equation*}
  For the remaining difference, we have that 
  $v_h^m - v_h^{m-1} = \tau \discreteL^{\frac12} ( \discreteL^{a} \partial_\tau u_h^{m} )$ with
  $\discreteL^{a} \partial_\tau u_h^{m} \in V_h$, so~\eqref{eq:discrete_inverse} with the pair
  $(0,\tfrac12)$ gives
  $\| v_h^m - v_h^{m-1} \| \leq \cstinv {\tau}{h^{-1}} \| \discreteL^{a} \partial_\tau u_h^{m} \|$,
  and~\eqref{eq:three_layer_endpoint} follows. %
  Finally, we turn to \eqref{eq:three_layer_alt}. By algebraic manipulation, we have for the left-hand side 
  \begin{align*}
    \langle 
        v_h^{m+1;\theta},
        v_h^{m+1} - v_h^{m-1}
    \rangle
    &=
    \tfrac14
    \langle 
        v_h^{m+1} + 2v_h^{m} + v_h^{m-1},
        v_h^{m+1} + v_h^m - (v_h^m + v_h^{m-1})
    \rangle
    \\
    &\quad + (\theta - \tfrac14)
    \langle 
        v_h^{m+1} - 2v_h^{m} + v_h^{m-1},
        v_h^{m+1} - v_h^m + (v_h^m - v_h^{m-1})
    \rangle
    \\
    &=
    \mathcal{W}_h^{m+1} - \mathcal{W}_h^{m}.\qedhere
  \end{align*}
\end{proof}

\begin{remark}[Sharpness at the endpoint]\label[remark]{rem:three_layer_sharpness}
Estimate~\eqref{eq:three_layer_lower} degenerates as $\theta \downarrow \tfrac14$
and fails at the endpoint, where $\mathcal{W}_h^m = \tfrac14 \| v_h^m + v_h^{m-1} \|^2$ vanishes for
$v_h^m = - v_h^{m-1}$. The substitute is~\eqref{eq:three_layer_endpoint}, which controls
$\| v_h^m \|$ only together with the kinetic term and only up to the factor $\tau h^{-1}$. That factor
is sharp. At $\theta = \tfrac14$ the sawtooth $u_h^m = (-1)^m \varepsilon \psi_h$, with $\psi_h$
an eigenfunction of $\discreteL$ for the largest eigenvalue $\lambda_{\max,h}$, gives $\mathcal{W}_h^m = 0$
and quotient $\| v_h^m \| \big/ \| \discreteL^{a} \partial_\tau u_h^{m} \|
= \tfrac{\tau}{2} \lambda_{\max,h}^{\frac12} \sim \tau h^{-1}$. Every argument recovering
$\| v_h^m \|$ from the three-layer energy there must therefore accept that factor or impose a mesh
condition.
\end{remark}

\end{document}